\documentclass[dvipsnames,reqno,10pt]{amsart}

\usepackage{amsmath, amssymb ,amsthm, amsfonts, amsgen, mathtools}
\usepackage{bbm}
\usepackage{bm}
\usepackage{tikz}
\usepackage{subcaption}
\usetikzlibrary{intersections,calc,patterns}
\usepackage{enumitem}
\usepackage[colorlinks=true, linkcolor=blue, citecolor=blue]{hyperref}
\usepackage[normalem]{ulem}
\usepackage{cancel}
\usepackage[noadjust]{cite}

\numberwithin{equation}{section}

\DeclareMathOperator{\Div}{div}
\newcommand{\loc}{{\rm loc}}

\newcommand{\weakly}{\rightharpoonup}

\newcommand{\Ecal}{{\mathcal{E}}}
\newcommand{\Ical}{{\mathcal{I}}}
\newcommand{\Scal}{{\mathcal{S}}}
\newcommand{\Hcal}{{\mathcal{H}}}
\newcommand{\Acal}{{\mathcal{A}}}
\newcommand{\Qcal}{{\mathcal{Q}}}
\newcommand{\Pcal}{{\mathcal{P}}}
\newcommand{\eps}{ \varepsilon}

\newcommand{\N}{{\mathbb{N}}}
\newcommand{\R}{{\mathbb{R}}}
\newcommand{\ffi}{\varphi}
\newcommand{\one}{\mathbbm{1}}
\newcommand{\qc}{{\rm qc}}

\newcommand{\dd}{{\,d}}

\newcommand\restrict[1]{\raisebox{-.5ex}{$|$}_{#1}}
\renewcommand{\epsilon}{\varepsilon}
\newcommand{\norm}[1]{\|#1\|}

\newcommand{\abs}[1]{|#1|}
\newcommand{\abslr}[1]{\left| #1 \right|}
\newcommand{\qand}{\quad\text{and}\quad}
\newcommand{\Wmin}{\overline{W}}
\renewcommand{\phi}{\varphi}
\renewcommand{\d}{\delta}
\newcommand{\e}{\varepsilon}
\renewcommand{\O}{\Omega}
\DeclareMathOperator{\supp}{supp}
\newcommand{\He}{H^{\rho,p}_{\delta^\epsilon}(\Omega;\R^3)}
\newcommand{\Hd}[1]{H^{\rho,p}_{\delta}(#1;\R^3)}
\newcommand{\Hdk}[1]{H^{\rho,p}_{\delta_k}(#1;\R^3)}
\newcommand{\Hzero}{H^{\rho,p}_{\delta^0}(\Omega;\R^3)}

\newcommand{\Ne}{N^{\rho,p}_{\delta^\epsilon}(\Omega;\R^3)}
\newcommand{\Nd}[1]{N^{\rho,p}_{\delta}(#1;\R^3)}
\newcommand{\Ndk}[1]{N^{\rho,p}_{\delta_k}(#1;\R^3)}
\newcommand{\Nzero}{N^{\rho,p}_{\delta^0}(\Omega;\R^3)}

\newcommand{\Qbar}{\bar{Q}_{\rho,\bar{\d}}}

\newcommand{\Qcale}{\Qcal_{\rho,\d^\e}}
\newcommand{\Qcald}{\Qcal_{\rho,\d}}
\newcommand{\Qcaldk}{\Qcal_{\rho,\d_k}}
\newcommand{\Qcalzero}{\Qcal_{\rho,\d^0}}
\newcommand{\Pcale}{\Pcal_{\rho,\d^\e}}
\newcommand{\Pcald}{\Pcal_{\rho,\d}}
\newcommand{\Pcaldk}{\Pcal_{\rho,\d_k}}

\newcommand{\Pcalzero}{\Pcal_{\rho,\d^0}}
\newcommand{\De}{D_{\rho,\d^\e}}
\newcommand{\Dzero}{D_{\rho,\d^0}}
\newcommand{\Dbar}{\bar{D}_{\rho,\d^0}}
\newcommand{\Dd}{D_{\rho,\d}}
\newcommand{\Ddk}{D_{\rho,\d_k}}
\newcommand{\Dtwo}{D_{\bar{\rho},\bar{\d}^0}}
\newcommand{\Dtwod}{D_{\bar{\rho},\bar{\d}}}
\newcommand{\Dtwoe}{D_{\bar{\rho},\bar{\d}^\e}}
\newcommand{\dive}{\Div_{\rho,\d^\e}}
\newcommand{\divzero}{\Div_{\rho,\d^0}}
\newcommand{\divd}{\Div_{\rho,\d}}

\newcommand{\Pcaltwo}{\Pcal_{\bar{\rho},\bar{\d}^0}}

\newcommand{\Pcaltwoe}{\Pcal_{\bar{\rho},\bar{\d}^\e}}
\newcommand{\Qcaltwoe}{\Qcal_{\bar{\rho},\bar{\d}^\e}}
\newcommand{\Pcaltwod}{\Pcal_{\bar{\rho},\bar{\d}}}
\newcommand{\Qcaltwod}{\Qcal_{\bar{\rho},\bar{\d}}}
\DeclareMathOperator*{\argmin}{arg\,min}
\DeclareMathOperator{\dist}{dist}

\newcommand{\black}{\color{black}}

\newtheoremstyle{thmlemcorr}{10pt}{10pt}{\itshape}{}{\bfseries}{.}{10pt}{{\thmname{#1}\thmnumber{ #2}\thmnote{ (#3)}}}
\newtheoremstyle{thmlemcorr*}{10pt}{10pt}{\itshape}{}{\bfseries}{.}\newline{{\thmname{#1}\thmnumber{ #2}\thmnote{ (#3)}}}
\newtheoremstyle{defi}{10pt}{10pt}{\itshape}{}{\bfseries}{.}{10pt}{{\thmname{#1}\thmnumber{ #2}\thmnote{ (#3)}}}
\newtheoremstyle{remexample}{10pt}{10pt}{}{}{\bfseries}{.}{10pt}{{\thmname{#1}\thmnumber{ #2}\thmnote{ (#3)}}}
\newtheoremstyle{ass}{10pt}{10pt}{}{}{\bfseries}{.}{10pt}{{\thmname{#1}\thmnumber{ A#2}\thmnote{ (#3)}}}

\theoremstyle{thmlemcorr}
\newtheorem{theorem}{Theorem}
\numberwithin{theorem}{section}
\newtheorem{lemma}[theorem]{Lemma}
\newtheorem{corollary}[theorem]{Corollary}
\newtheorem{proposition}[theorem]{Proposition}

\theoremstyle{thmlemcorr*}
\newtheorem{theorem*}{Theorem}
\newtheorem{lemma*}[theorem]{Lemma}
\newtheorem{corollary*}[theorem]{Corollary}
\newtheorem{proposition*}[theorem]{Proposition}
\newtheorem{problem*}[theorem]{Problem}
\newtheorem{conjecture*}[theorem]{Conjecture}

\theoremstyle{defi}
\newtheorem{definition}[theorem]{Definition}

\theoremstyle{remexample}

\theoremstyle{remexample}
\newtheorem{remarkx}[theorem]{Remark}
\newenvironment{remark}
  {\pushQED{\qed}\remarkx}
  {\popQED\endremarkx}
\newtheorem{examplex}[theorem]{Example}
\newenvironment{example}
  {\pushQED{\qed}\examplex}
  {\popQED\endexamplex}

\title[Derivation of membrane models in nonlocal hyperelasticity]{Derivation of variational membrane models in the context of anisotropic nonlocal hyperelasticity}

\author{Dominik Engl}
\address{Mathematisch-Geographische Fakult\"at, Katholische Universit\"at Eichst\"att-Ingolstadt, Ostenstra{\ss}e 28, 85071 Eichst\"att, Germany}
\email{dominik.engl@ku.de}

\author{Anastasia Molchanova}
\address{Institute of Analysis and Scientific Computing, TU Wien,
Wiedner Hauptstra\ss e 8--10, 1040 Wien, Austria}
\email{anastasia.molchanova@tuwien.ac.at}

\author{Hidde Sch\"onberger}
\address{Research Institute in Mathematics and Physics, Université catholique de Louvain, Chemin du Cyclotron 2, 1348 Louvain-la-Neuve, Belgium}
\email{hidde.schonberger@uclouvain.be}

\begin{document}

\begin{abstract}    
Motivated by the analysis of thin structures, we study the variational dimension reduction of hyperelastic energies involving nonlocal gradients to an effective membrane model. When rescaling the thin domain, isotropic interaction ranges naturally become anisotropic, leading to the development of a theory for anisotropic nonlocal gradients with direction-dependent interaction ranges. Unlike existing nonlocal derivatives with finite horizon, which are defined via interaction kernels supported on balls of positive radius, our formulation is based on ellipsoidal interaction regions whose principal radii may vanish independently. This yields a unified framework that interpolates between fully nonlocal, partially nonlocal, and purely local models. Employing these tools, we present a $\Gamma$-convergence analysis for the nonlocal thin-film energies. The limit functional retains the structural form of the classical membrane energy, and the classical local model is recovered precisely when all interaction radii vanish.
    \medskip
    
    \noindent\textsc{MSC (2020):} 49J45 (primary), 74G65, 74K15, 35R11, 74A70, 70G75
    \medskip

    \noindent\textsc{Keywords:} nonlocal gradients, nonlocal hyperelasticity, localization, dimension reduction, membrane, $\Gamma$-convergence
    \medskip
 
    \noindent\textsc{Date:} \today.
\end{abstract}

\maketitle

\section{Introduction}

Peridynamics is a nonlocal extension of classical continuum mechanics introduced in~\cite{Sil2000}, that naturally admits discontinuities such as fractures and cracks. 
Unlike classical elasticity, which relies on derivatives of deformations or displacements and hence becomes ill-posed near singularities, peridynamics formulates the governing equations of continuum mechanics in a nonlocal form. 
Thus, the energy is based on pairwise force interactions between material points within a defined interaction range known as the peridynamic horizon.

Peridynamics is typically classified into two categories. \emph{Bond-based peridynamics}, introduced in~\cite{Sil2000}, restricts force interactions to pairwise bonds between individual material points, where the force magnitude depends only on the relative displacement between two points. \textit{State-based peridynamics}, proposed in~\cite{SilEptWecXuAsk2007}, generalizes this framework by allowing interaction forces to depend on the deformation state of all bonds connected to the endpoints, not merely the bond stretch itself.
The second approach allows for more realistic modeling of material behavior.
However, the mathematical structure of state-based models is more complex, particularly in nonlinear and variational settings.
For an overview of recent developments in theory, as well as their numerical implementations, the reader is referred to the reviews \cite{DimCocAleFanPol2022,DorRenZhuSilMadRab2024,HanZhaWanLuJia2019,LadGon2021, OteOte2024} and references therein.

\subsection*{Nonlocal hyperelasticity} 
This work addresses a specific class of state-based peridynamics called \textit{nonlocal hyperelasticity}. In this framework, the material response is governed by an energy functional that mirrors the classical hyperelastic setting but replaces the local gradient $\nabla$ by a nonlocal counterpart $D_\rho$ defined by
\[
D_\rho u (x) :=\int_{\R^n} \frac{u(y)-u(x)}{\abs{y-x}}\otimes\frac{y-x}{\abs{y-x}}\rho(y-x)\,dy \quad \text{for $x \in \R^n$},
\]
with a suitable radial interaction kernel $\rho\colon\R^n \setminus \{0\} \to [0,\infty)$ and $u\colon\R^n \to \R^n$. Precisely, we consider stored energies of the form
\begin{align}\label{nonlocal_energy}
    u\mapsto \int_{\Omega}W(D_\rho u)\dd x
\end{align}
where $u$ describes the deformation of the reference configuration $\Omega\subset \R^n$, and $W\colon \R^{n\times n} \to [0,\infty)$ denotes the material energy density. These models were already proposed in \cite[Section~18]{SilEptWecXuAsk2007}, but have only recently been subjected to a rigorous variational analysis. The first work to investigate the Sobolev spaces associated with the nonlocal gradients was \cite{ShS15} (see also~\cite{ShS18,CoS19}), in which the Riesz fractional gradient was considered that arises by choosing the interaction kernel $\rho = \abs{\cdot}^{-(n+s-1)}$ for some fractional exponent $s \in (0,1)$. There, the authors established fractional Poincar\'e inequalities and compact embeddings, and addressed existence of minimizers for convex energy densities. These existence results were later generalized to polyconvex and quasiconvex energy densities \cite{BCM20,KrS22}, which are typically employed in applications of hyperelasticity. To better align with peridynamic models, the extension to finite-horizon fractional gradients was considered in \cite{BCM23,CKS23}. More recently, a theory for general compactly supported interaction kernels $\rho$ was developed \cite{BMS24}, a setting that we also adopt in this paper. Beyond the results on the function spaces and the existence of minimizers, it has furthermore been shown in \cite{CKS25} (see also~\cite{MeS15, BGS26}) that nonlocal models $\Gamma$-converge to local ones as the horizon vanishes, in particular for families of kernels $\rho_\d:= \delta^{-n}\rho(\cdot/\d)$ for $\d>0$.

\subsection*{Classical dimension reduction} We are particularly interested in the study of thin structures, which are known to show fundamentally different responses to external loading compared to bulk materials. 
A natural way to analytically capture these effects within classical nonlinear hyperelasticity is to study the asymptotic behavior of three-dimensional energy functionals on domains whose thickness tends to zero and to identify the resulting lower-dimensional models. 
Depending on the underlying scaling, one obtains qualitatively different theories in the limit. 
Early rigorous results in this direction include limits that yield string \cite{ABP91} and membrane models \cite{LeR95}.
The subsequent analysis of thin structures was strongly shaped by the seminal paper \cite{FJM02}, which led, among other things, to a systematic derivation of plate theories in several scaling regimes \cite{FJM02,FJM06} and the generalization to non-Euclidean elasticity \cite{Lew23}, as well as the development of rod theories \cite{MoM03,MoM04,Sca06,Sca09}.
In the last two decades, variational dimension reduction has seen extensions in several different directions.
A recurring theme is the incorporation of non-convex differential constraints, motivated for instance by incompressibility \cite{CoD06,CoD09,EnK21,EnK21b}, non-self-interpenetration of matter and (global) invertibility \cite{LPS15,OlR17,Sch07,BCHK25}, or $\Acal$-free fields \cite{KrR15,Kre17} with applications in elasticity or micromagnetism.
As the latter indicates, thin-structure limits have also been pursued well beyond the purely elastic setting. 
Further extensions include elastoplasticity \cite{EKK24,Dav13,Dav14}, magnetoelasticity \cite{Bre21,BrK23}, liquid crystal elastomers \cite{CPB15,BPP25}, or fracture mechanics \cite{Bab06b,Sch17,ARS23}, to name just a few.

Much of the existing literature on lower-dimensional peridynamic models is based on heuristic reductions or formal asymptotics, often complemented by numerical simulations: peridynamic membranes were introduced in \cite{SilBob2005}, plate models were studied in \cite{TaySte2015,OGraFos2014,YanVazDiyOteOte2020, CavCutDayDesFra2023}, while shell theory is considered in \cite{BehAlaTraBaz2022,ChoRoyRoyRed2016}.
In contrast, a rigorous variational thin-structure theory for peridynamics is still scarce and, at present, is represented by only a small number of recent contributions, notably \cite{BrS25,BPS25}. In \cite{BrS25}, the authors identify the critical scaling for the $W^{s,2}$-Gagliardo seminorm for $s>\frac12$ that yields nontrivial lower-dimensional limit models, namely $(1-s)\e^{2s-3}$, thereby making explicit the coupling between the thin geometry and the fractional nature of the interaction. Moreover, they establish a Bourgain–Brezis–Mironescu-type \cite{BBM01} result in the joint limit $(\e,s)\to (0,1)$.
The follow-up paper \cite{BPS25} treats smaller fractional parameters $s<\frac12$. Among other results, it shows that scaling the energy by $\e^{-2}$ increases the fractional parameter of the lower-dimensional limit model.
Despite these recent advances in the fractional, infinite-horizon setting, a $\Gamma$-convergence result for finite horizon appears to be unexplored.

\subsection*{Contributions of the current work} The contribution of this work is a thin-film analysis of finite-horizon nonlocal hyperelastic materials, in which
the stored energy is given by \eqref{nonlocal_energy} in the reference configuration $\Omega^\e:=\omega\times (0,\e)$, where the cross-section $\omega\subset \R^2$ is a bounded Lipschitz domain and $\e>0$ represents the width. In parallel, we are currently working on a corresponding analysis for bond-based peridynamics similar to \cite{MenSco2024,BelMorCorPed2015}. 
As customary, the $\e$-dependence of the domain is removed by a linear rescaling in the out-of-plane direction. 
Under this rescaling, a radially symmetric interaction neighborhood for the nonlocal gradient becomes anisotropic (ellipsoidal), and the resulting non-radial structure is not covered by the existing theory tailored to isotropic horizons.
Accordingly, Section~\ref{sec:nonlocal_gradients} develops a general theory for anisotropic nonlocal derivatives suited to thin geometries. For simplicity, we replace the standard omnidirectional horizon $\delta\in (0,1]$ with a pair of (possibly vanishing) horizons $\d=(\bar{\d},\d_3)\in[0,1]^2$, where $\bar{\d}$ controls the in-plane interaction radius and $\d_3$ the out-of-plane interaction radius, see Figure \ref{fig:ellipsoid}. This yields a unified framework that interpolates between fully nonlocal, partially nonlocal, and purely local regimes; extensions to higher dimensions and multiple radii are discussed in Remark~\ref{rem:higher_dimensions}. 
\begin{figure}[h!]
    \centering
    \begin{tikzpicture}[scale=1.7]	
        \draw (-1.7,0) -- (-1.2,0);
        \draw[densely dotted] (-1.2,0) -- (1.2,0);
        \draw[->] (1.2,0) -- (1.7,0) node [right] {\small $x_1$};
        \draw (-149:-1.5) -- (-149:-1);
        \draw[densely dotted] (-149:-1) -- (0,0);
        \draw[->] (0,0) -- (-149:1.5) node[below left] {\small $x_2$};
        \draw (0,-1.2) -- (0,-.75);
        \draw [densely dotted](0,-.75) -- (0,.75);
        \draw[->] (0,.75) -- (0,1.2) node [right] {\small $x_3$};
        	
        \draw[thick] (-1.2,0) arc (180:360:1.2cm and 0.3cm);
        \draw[thick,dashed] (-1.2,0) arc (180:0:1.2cm and 0.3cm);
        \draw[thick] (0,.75) arc (90:270:0.5cm and .75cm);
        \draw[thick,dashed] (0,.75) arc (90:-90:0.5cm and .75cm);
        \draw[thick] (0,0) ellipse (1.2cm and .75cm);
        \shade[ball color=blue!10!white,opacity=0.20] (0,0) ellipse (1.2cm and .75cm);
        \draw[red,thick,->] (0,0) -- (0,.25) node[above left] {\small $\delta_3$} -- (0,.75) ;
        \draw[ForestGreen,thick,->] (0,0) -- (1.2,0) node[below right] {\small $\bar{\delta}$};
    	\draw[ForestGreen,thick,->] (0,0) -- (-149:.55) node[above left] {\small $\bar{\delta}$};
    \end{tikzpicture}
    \caption{An illustration of an ellipsoid with principal radii $\bar{\d},\bar{\d},\d_3>0$.}
    \label{fig:ellipsoid}
\end{figure}
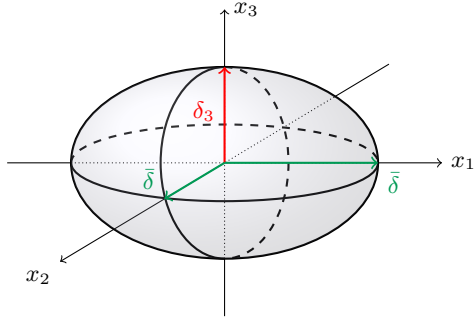

The basic space of interest for our analysis is
\begin{align}\label{Hrho_intro}
	\Hd{O}:=\{u\in L^p(\R^3;\R^3) : u = 0 \text{ on } (O_{\d})^c \text{ and } \Dd u \in L^p(O;\R^{3\times 3})\},
\end{align}
where $\Dd u$ is a nonlocal gradient with ellipsoidal interaction neighborhood, see~\eqref{Dzero}, $O \subset \R^3$ is an open set, and 
$$O_{\d} := O +  T_{\d}B_1(0)\quad \text{with}\quad T_{\d}:=\begin{pmatrix}\bar{\d} & 0 & 0\\ 0 & \bar{\d} & 0\\ 0 & 0 & \d_3\end{pmatrix},$$
with $B_1(0)$ being the unit ball.
We impose the assumption $u=0$ on $(O_{\d})^c$ for convenience since these values do not affect $\Dd u$ on $O$ and it enables the embedding $\Hd{O}\subset L^p(\R^3;\R^3)$ independently of $\d$.

The core result in Section \ref{sec:nonlocal_gradients} is the translation mechanism between $H^{\rho,p}_\d$ and $W^{1,p}$ for any $\d\in[0,1]^2$ in Proposition \ref{prop:translation}, which was first developed in \cite{CKS23} for isotropic positive horizons. It constitutes an isomorphism that turns the local gradient into the nonlocal one, and provides a powerful tool for reducing proofs in the nonlocal setting to the well-known local case. While the definition of the translation operator arises from the isotropic one via a straightforward change of variables, a key challenge is to obtain estimates uniformly in $\d$, as the usual Fourier methods do not carry over to the anisotropic case. Additionally, new difficulties arise in the degenerate case where one component of $\d$ vanishes.
There, we rely on a careful approximation by strictly positive horizons $(\d_k)_k\subset (0,1]^2$ and on the density of $C_c^\infty$-functions in $H^{\rho,p}_\d$ (see Propositions~\ref{prop:density} and \ref{prop:density_full_space}), which we prove before via a suitable Leibniz rule (Proposition \ref{lem:product}). Compared to the existing literature, establishing these two results becomes substantially more delicate in the partially local setting. The translation mechanism provides several immediate consequences needed for the membrane analysis, most notably a Poincar\'e-type inequality and a reduction principle for the out-of-plane variable.
More precisely, since there exist functions with vanishing nonlocal gradient on $O$ that are not constant (cf.~Remark~\ref{rem:N} and \cite{KrS24}), it is not immediate that $(\Dd u)e_3=0$ enforces independence of $x_3$. 
We show, however, that there exists $h\in \Hd{O}$ with $\Dd h = 0$ such that $u-h$ is independent of $x_3$, see Lemma \ref{le:constant_x3}. 
In fact, Lemma \ref{le:2dgradient} then guarantees the fact that the first two columns of $\Dd(u-h)$ actually coincide with a (non)local two-dimensional in-plane gradient.

In the thin-film limit of Section~\ref{sec:thin_films}, the out-of-plane rescaling maps the physical horizon $\d\in [0,1]^2$ to the rescaled horizon
$$\d^\e:=\bigg(\bar{\d},\frac{\d_3}{\e}\bigg)\in [0,1]^2,$$
where neither $\d$ nor $\d^\e$ is assumed to be isotropic. For technical reasons, we also implicitly assume that $\d_3\leq \e$. 
While $\d_3 \sim \e$ can be treated similarly, the regime $\d_3\gg \e$ is beyond the scope of this work. The main obstruction is geometric: these nonlocal energies require deformations to be defined on the (in practice only slightly) enlarged interaction domain $\Omega_{\d^\e}$, where $\Omega=\omega \times (0,1)$. However, this set explodes in the out-of-plane direction if $\frac{\d_3}{\e} \gg 1$. A particular consequence is that $\delta_3=\delta_3(\epsilon)$ should depend on $\epsilon$. In fact, we also allow the cross-section horizon $\bar{\d}=\bar{\d}(\epsilon)$ to depend on $\e$, which makes it possible to recover both nonlocal and local dimensionally reduced models, see \eqref{eq:cases}. We mention that the tools developed in Section~\ref{sec:nonlocal_gradients} make it possible to treat anisotropic varying horizon limits without dimension reduction as well, but we do not carry this out here (see~\cite{CKS25,BGS26} for the isotropic case).

The rescaled stored elastic energy, normalized per unit volume, is now given by
\begin{align}\label{energies_intro}
   \Ical_\eps\colon L^p(\R^3;\R^3)\to [0,\infty],\ u\mapsto 
    \begin{cases}
        \displaystyle\int_{\Omega} W\big((D_{\rho,\d^\e} u) T_{(1,\e)}^{-1}\big)\dd x &\text{ if } u\in \He ,\\
        \infty  & \text{ otherwise}.
    \end{cases}
\end{align} 
We emphasize that the choice $\d=0$ recovers the classical hyperelastic setting \cite{LeR95} up to boundary conditions, which we omit here.
With the theoretical framework of anisotropic nonlocal gradients in place, the dimension reduction becomes conceptually straightforward. Given $\d^\e:=(\bar{\d},\frac{\d_3}{\e})\to \d^0\in[0,1]^2$, we prove in Theorem \ref{th:state} that $(\Ical_\e)_\e$ from \eqref{energies_intro} $\Gamma$-converges with respect to the strong topology in $L^p(\R^3;\R^3)$ to
\begin{align}\label{limit_intro}
	\Ical\colon L^p(\R^3;\R^3)\to [0,\infty],\ u\mapsto 
    \begin{cases}
		\displaystyle \int_\Omega \Wmin^\qc(\Dbar u)\dd x & \text{ if }u\in \Hzero, (\Dzero u)\restrict{\Omega}e_3 =0,\\
    	\infty  &\text{ otherwise.}
    \end{cases}
\end{align}
Here, $\Dbar$ denotes the first two columns of $\Dzero$, the density $\Wmin\colon \R^{3\times 2} \to [0,\infty)$ emerges from $W$ through minimization in the last column, and $(\cdot)^\qc$ stands for the quasiconvex envelope. The lower-dimensional character of \eqref{limit_intro} becomes particularly transparent once one chooses a representative $\bar{u}$ of $u$ that is independent of $x_3$ as in Remark \ref{rem:2drep}, so that we can express
\begin{align}\label{eq:niceGammalim}
        \Ical(u) = \int_{\omega} \Wmin^\qc( \Dtwo \bar{u}) \dd \bar{x},
\end{align}
where $\Dtwo$ is the in-plane nonlocal gradient of horizon $\bar{\d}^0$ with respect to the reduced kernel $\bar{\rho}$ from Lemma~\ref{le:2dgradient}. It is worth pointing out that this in-plane gradient only depends on the limit $\bar{\delta}^0$ and is thus independent of the relation between $\d$ and $\e$ as long as $\d_3\leq \e$.
The $\Gamma$-convergence is complemented by a compactness result, which holds up to subtraction of some functions with vanishing (non)local gradient; if $\d=0$, this reduces to subtracting mean values. 
The proofs rely heavily on the translation mechanism, which allows us to pass back and forth between the nonlocal and local settings.

To illustrate the result, we highlight the two main cases of interest:
\begin{equation}\label{eq:cases}
\text{(i)} \ \ \delta(\epsilon) = (1,\epsilon) \quad \text{and} \quad \text{(ii)} \ \ \d(\epsilon) = (\epsilon,\epsilon).
\end{equation}
The former reflects a physical horizon that is anisotropically scaled in the vertical direction equal to the thickness of the domain; one can think of this as arising from vertically squeezing an elastic material with isotropic horizon. The rescaled horizon $\d^\e=(1,1)$ has no more dependence on $\epsilon$, and one obtains from \eqref{eq:niceGammalim} a nonlocal $\Gamma$-limit given by
\begin{align*}
        \Ical(u) = \int_{\omega} \Wmin^\qc( D_{\bar{\rho}} \bar{u}) \dd \bar{x}.
\end{align*}
The case (ii) corresponds to an isotropic physical horizon which scales like $\epsilon$ in all directions; this can be interpreted as zooming out from an infinite length strip with finite thickness and a fixed horizon. We find that $\bar{\delta}^0 = 0$, meaning that \eqref{eq:niceGammalim} becomes the local hyperelastic functional
\begin{align*}
        \Ical(u) = \int_{\omega} \Wmin^\qc( \nabla \bar{u}) \dd \bar{x}.
\end{align*}

A technical and modeling nuisance for the energies \eqref{energies_intro} and \eqref{limit_intro} is their invariance under the addition of functions with vanishing gradient. 
In the setting $\d=0$, this invariance is harmless, since the only functions with zero gradient are constants.
The nonlocal setting, however, gives rise to highly nontrivial functions $h\in \He$ with $D_{\rho,\d^\e} h=0$, see~Remark~\ref{rem:N} and \cite{KrS24}; as a consequence, such large deformations are energetically invisible.
In the spirit of \cite[Section 6]{Sil17}, we therefore add a stabilization term that penalizes such contributions and, in particular, discourages them from appearing along minimizing, or almost-minimizing, sequences:
\[
\Scal_\epsilon (u):= \int_{\Omega_{\d^\e}}\abs{u}^p\,dx + \int_{\Omega_{\d^\e}}\int_{\Omega_{\d^\e}}\frac{\abs{u(\bar{x},x_3)-u(\bar{x},x_3')}^p}{\abs{\epsilon(x_3-x_3')}}\,dx\,dx' \quad \text{for $u \in L^p(\R^3;\R^3)$}.
\]
The first term penalizes large deformations with zero nonlocal gradient, while the second, purely out-of-plane, nonlocal term enforces convergence to functions that are constant in the third variable (and hence vanishes on the limiting class). We conclude this paper with a final $\Gamma$-convergence result, namely for $(\Ical_\e + \Scal_\e)_\e$, which is complemented with a weak compactness result that does not require the subtraction of functions with zero (nonlocal) gradient.

This paper is organized as follows: In Section \ref{sec:nonlocal_gradients}, we establish the full theory for anisotropic nonlocal gradients, while Section \ref{sec:thin_films} deals with the asymptotic variational analysis of thin films in nonlocal hyperelasticity.\black

\subsection{Notation}
In this notation section we select $n\in\{2,3\}$. The standard unit vectors in $\R^n$ are denoted by $e_1,\ldots, e_n$ and we write $x\cdot y$ for the scalar product of two vectors $x,y\in\R^n$ and set $|x|:=\sqrt{x\cdot x}$.
Since this article is motivated by the analysis of three-dimensional thin films, we encounter a natural anisotropy of the reference configuration in the sense that it is much smaller in one direction. This often leads us to study in-plane and out-of-plane quantities separately.
We thus abbreviate the first two components of a vector $x=(x_1,x_2,x_3)\in \R^3$ by $\bar{x}:=(x_1,x_2)\in\R^2$.
The space $\R^{m\times n}$ of $m\times n$-matrices with $m\in\{2,3\}$ shall be equipped with the standard Frobenius norm given by $|A|:=\sqrt{\mathrm{Tr}(A^TA)}$ for $A\in\R^{m\times n}$, where $A^T$ is the transpose of $A\in \R^{m\times n}$ and $\mathrm{Tr}$ is the trace. We frequently employ the notation
$$T_r:=\begin{pmatrix}\bar{r} & 0 & 0\\ 0 & \bar{r} & 0\\ 0 & 0 & r_3\end{pmatrix} \quad \text{for}\quad r=(\bar{r},r_3)\in [0,\infty)^2,$$
where $\bar{r}\geq 0$ and $r_3\geq 0$ represent (possibly vanishing) in- and out-of-plane distances.

Given two sets $U,V\subset \R^n$, their usual Minkowski sum is given by $U+V:=\{u+v : u\in U, v\in V\}$ and $U^c:=\R^n\setminus U$ is the complement of $U$.
For $x_0\in \R^3$ and $r\geq 0$, we set $B_r(x_0):=\{x\in \R^3 : |x-x_0|<r\}$ as the open ball with radius $r$ and center $x_0$ (which is empty if $r=0$).
The corresponding two-dimensional open disk with radius $r$ around some $\bar{x}_0\in\R^2$ shall be denoted by $\bar{B}_r(\bar{x}_0).$
The theory of nonlocal gradients with (anisotropic) finite horizon on bounded domains requires that functions are defined on a slightly larger domain.
For any open set $O\subset \R^3$, we thus define
$$O_\d:= O + T_\d B_1(0) \quad \text{for} \quad \d=(\bar{\d},\d_3)\in [0,1]^2,$$
where $T_\d B_1(0)$ describes the ellipsoid with center $0\in\R^3$ whose principal axes are aligned with the coordinate axes and with (possibly vanishing) in-plane radius $\bar{\d}\in[0,1]$ and out-of-plane radius $\d_3\in[0,1]$, see Figure \ref{fig:ellipsoid}. Similarly, we define in the two-dimensional setting $O_{\bar{\d}}:= O + B_{\bar{\d}}(0)$ for $O\subset \R^2$.

Given a set $E\subset \R^n$, its characteristic function $\mathbbm{1}_E$ is defined via
$$\mathbbm{1}_E(x) = \begin{cases}1 & \text{ if }x\in E,\\ 0 & \text{ otherwise.}\end{cases}$$
We say that a function $f\colon \R^n\setminus\{0\} \to \R$ is radial if it depends, in fact, only on the length of its argument;
in this case, we write $f(z)= f^{\rm rad}(|z|)$ for every $z\in \R^n\setminus\{0\}$.
For an open set $U\subset \R^n$, we employ the common notation for Lebesgue and Sobolev spaces as well as for spaces of smooth functions (with compact support) with values in $\R^m$ for $m\in \N$: $L^p(U;\R^m)$, $W^{1,p}(U;\R^m)$, $C^\infty(U;\R^m)$ and $C_c^\infty(U;\R^m)$; if $m=1$ then we drop $\R^m$ in the notation.
The in-plane derivatives $(\partial_1\cdot|\partial_2\, \cdot\,)$ of $\nabla$ are abbreviated as $\bar{\nabla}$.
Moreover, we work briefly with both the Gagliardo and Bessel potential spaces $W^{s,p}(\R^n;\R^m)$ and $H^{s,p}(\R^n;\R^m)$ with fractional parameter $s\in (0,1)$.
The former are defined as
$$W^{s,p}(\R^n;\R^m):=\left\{u\in L^p(\R^n;\R^m) : [u]_{W^{s,p}(\R^n;\R^m)}:=\left(\int_{\R^n}\int_{\R^n} \frac{|u(x) - u(x')|^p}{|x-x'|^{n+sp}}\right)^{\frac{1}{p}}<\infty\right\}$$
together with the norm $\norm{u}_{W^{s,p}(\R^n;\R^m)}:=\norm{u}_{L^p(\R^n;\R^m)} + [u]_{W^{s,p}(\R^n;\R^m)}$.
The Bessel potential spaces are given by
$$H^{s,p}(\R^n;\R^m):=\left\{u\in L^p(\R^n;\R^m) : (\langle\cdot\rangle^s \widehat{u})^\vee\in L^p(\R^n;\R^m)\right\},$$
where $\langle\cdot\rangle:=\sqrt{1+|\cdot|^2}$, and $\widehat{(\cdot)}$ and $(\cdot)^\vee$ denote the Fourier transform and its inverse, respectively;
the corresponding norm is given by $\norm{u}_{H^{s,p}(\R^n;\R^m)}:=\norm{(\langle\cdot\rangle^s \widehat{u})^\vee}_{L^p(\R^n;\R^m)}$.
It is worth pointing out that there exists a constant $C>0$ such that
\begin{align}\label{eq:interpolation}
    \norm{u}_{H^{s,p}(\R^n;\R^m)}\leq C\norm{u}_{L^{p}(\R^n;\R^m)}^{1-s}\norm{u}_{W^{1,p}(\R^n;\R^m)}^s \quad \text{for every }u\in W^{1,p}(\R^n;\R^m)
\end{align}
in light of the interpolation inequality \cite[Corollary 3.19]{BGS25}.

Throughout the manuscript, we use $C>0$ for generic constants that depend on fixed data and may differ from line to line.
Any family indexed with a continuous parameter $\e>0$ refers to any sequence $(\e_k)_k$ with $\e_k \to 0$ as $k\to \infty$.
When families are indexed with two continuous parameters $\d,\e>0$, we often couple the two via $\d = \d(\e)$ and suppress the index $\d$ in the subsequent notation.

\section{Anisotropic nonlocal gradients}\label{sec:nonlocal_gradients}
Typically, nonlocal derivatives with finite horizon have been defined through kernels supported on balls of positive radius \cite{BCM23,BMS24,CKS23,KrS24}, that is,
\begin{equation}\label{eq:nonlocalgradient}
    D_\rho \phi(x):= \int_{\R^n} \frac{\phi(y)-\phi(x)}{\abs{y-x}}\frac{y-x}{\abs{y-x}}\rho(y-x)\,dy \quad \text{for $\phi \in C^{\infty}(\R^3)$.}
\end{equation}
Throughout the paper, the kernel $\rho\colon\R^n\setminus \{0\} \to [0,\infty)$ with $n=3$ is assumed to be radial and satisfying
\begin{itemize}
    \item[(H0)] $\inf_{B_{\eta_0(0)}} \rho >0$ for some $\eta_0>0$, $\supp(\rho) = \overline{B_1(0)}$ and $\displaystyle\int_{\R^n} \rho\dd z=n$.
\end{itemize}
Moreover, we impose the same technical conditions on the radial representation $\rho^{\rm rad}$ as in \cite{BMS24,CKS25}; in fact, we have marginally strengthened (H1) compared to those sources in order for this set of conditions to remain preserved in the lower-dimensional model, cf.~Lemma~\ref{le:2dgradient} below. Precisely, with $\eta_0$ as in (H0) and some $\nu >0$ and $0 < \sigma \leq \gamma <1$ we assume:
\begin{itemize}

\item[(H1)] the function $f_\rho\colon(0,\infty)\to \R, \ r \mapsto r^{n-2}\rho^{\rm rad}(r)$ is such that $r \mapsto r^\nu f_\rho(r)$ is non-increasing on $(0,\infty)$;

\item[(H2)] $f_\rho$ is smooth outside the origin and for every $k\in \N$ there exists a $C_k>0$ with
\[
\abslr{\frac{d^k}{dr^k}f_\rho(r)} \leq C_k \frac{f_\rho(r)}{r^k} \quad \text{for $r\in (0, \eta_0)$};
\]

\item[(H3)] the function $r \mapsto r^{n+\sigma-1}\rho^{\rm rad}(r)$ is almost non-increasing on $(0,\eta_0)$; 

\item[(H4)] the function $r \mapsto r^{n+\gamma-1}\rho^{\rm rad}(r)$ is almost non-decreasing on $(0,\eta_0)$.
\end{itemize}
A function $f\colon\R \to \R$ is called almost non-increasing or non-decreasing if there is some fixed constant $C>0$ so that for all $t_1<t_2$ it holds that $f(t_1) \geq Cf(t_2)$ or $f(t_1) \leq C f(t_2)$, respectively. We refer to~\cite{BMS24} for more about these conditions and examples that satisfy them. For our purposes, we are primarily interested in the following case.
\begin{example}[Truncated fractional kernel]\label{ex:fractionalkernel}
    Let $\chi\colon[0,\infty) \to [0,\infty)$ be a smooth and non-increasing function with $\supp (\chi) = [0,1]$ and $\chi(0)>0$. Then, for any $s \in (0,1)$ the truncated fractional kernel
    \[
    \rho(z):= \frac{\chi(\abs{z})}{\abs{z}^{2+s}} \quad \text{for $z \in \R^3\setminus\{0\}$},
    \]
    satisfies (H1)--(H4). Moreover, up to rescaling by a constant, (H0) is also satisfied.
\end{example}
In \cite[Proposition~3.2]{BMS24}, the authors proved that the nonlocal gradient is a type of average of the classical gradient in the sense that 
\begin{equation}\label{eq:Qrhoidentity}
    D_\rho \phi = Q_\rho * \nabla \phi = \nabla (Q_\rho *\phi) \quad \text{for all $\phi \in C^{\infty}(\R^3)$,}
\end{equation}
for
\begin{align}\label{Qrho}
	Q_\rho\colon \R^3\setminus\{0\}\to \R,\quad z\mapsto \int_{|z|}^{\infty}\frac{\rho^{\rm rad}(t)}{t}\dd t,
\end{align}
where $\rho^{\rm rad}$ is the radial representation of $\rho$.
Using (H0), one can verify that
\begin{equation}\label{eq:normalizedQ}
    \norm{Q_\rho}_{L^1(\R^3)}=1.
\end{equation}

In the analysis of thin films, however, the thickness of the body becomes vanishingly small, and one commonly introduces a rescaling in the out-of-plane direction.
This rescaling naturally produces anisotropic interaction neighborhoods, whose geometry falls outside the isotropic setting commonly treated in the literature \cite{BCM23,BMS24,CKS23,KrS24}.
The aim of this section is therefore to introduce nonlocal gradients $D_{\rho,\d}$ with an anisotropic horizon.
Specifically, we will replace the typical positive omnidirectional horizon $\delta\in (0,1]$ with two (possibly vanishing) horizons $\d=(\bar{\d},\d_3)\in[0,1]^2$, where $\bar{\d}$ describes the in-plane interaction radius in the cross-section variables $\bar{x}\in\R^2$ and $\d_3$ represents the out-of-plane horizon in $x_3\in\R$. 
In this way, the corresponding nonlocal gradients measure all interactions within ellipsoids with principal radii $\bar{\d},\bar{\d}$, and $\d_3$, see Figure \ref{fig:ellipsoid}.
This approach can also be generalized to higher dimensions and all different radii, see Remark \ref{rem:higher_dimensions} below.

We base our definition of the nonlocal gradient on the identity \eqref{eq:Qrhoidentity} with an anisotropic kernel and a suitable change of variables, which provides a simple way to define a meaningful anisotropic nonlocal gradient that is amenable to the tools already developed in \cite{BMS24,CKS25}.

\begin{lemma}[Anisotropic nonlocal gradients]\label{lem:anisotropic_nonlocal_gradients}
    Let $\ffi\in C^\infty(\R^3)$, $\d\in[0,1]^2$, and $Q_{\rho}$ is given by~\eqref{Qrho}, then the function $\Qcald \phi$ given by
    \begin{equation*}
        \Qcald \phi (x):= \int_{\R^3}Q_\rho(z) \phi(x-T_\d z)\,dz \quad \text{for $x \in \R^3$}
    \end{equation*}
    satisfies $\Qcald \phi\in C^\infty(\R^3)$. Moreover, the corresponding (nonlocal) gradient and divergence
    \begin{align}\label{Dzero}
        \Dd \coloneqq \nabla \circ \Qcald = \Qcald \circ \nabla \qand \divd \coloneqq \Div \circ \Qcald = \Qcald \circ \Div
    \end{align}
    satisfy the integration by parts formula
    \begin{align*}
        \int_{\R^3} \Dd \ffi \cdot \psi \dd x = -\int_{\R^3} \ffi\divd \psi  \dd x \quad\text{ for all }\ffi\in C_c^\infty(\R^3) \text{ and } \psi \in C_c^\infty(\R^3;\R^3).
    \end{align*}
\end{lemma}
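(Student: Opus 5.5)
The plan is to treat $\Qcald$ as an average of translates of $\phi$ against the finite measure $Q_\rho(z)\,dz$, and to get every claim from differentiation under the integral sign together with Fubini. I first record the properties of $Q_\rho$ that are needed. By \eqref{eq:normalizedQ}, $Q_\rho\in L^1(\R^3)$ with $\norm{Q_\rho}_{L^1(\R^3)}=1$. Since $\supp(\rho)=\overline{B_1(0)}$, the definition \eqref{Qrho} gives $Q_\rho(z)=0$ for $|z|\geq 1$. In addition, $Q_\rho\geq 0$. So $\Qcald\phi(x)$ is an integral of $\phi(x-T_\d z)$ over $z\in B_1(0)$ against an integrable weight. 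For each such $z$, the point $x-T_\d z$ lies in $x-T_\d \overline{B_1(0)}$, which is contained in a compact neighbourhood of $x$ because $\bar\d,\d_3\le 1$. This remains true when $\bar\d$ or $\d_3$ vanishes, since then $T_\d$ is just a degenerate linear map and nothing in the argument uses its invertibility.

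\emph{Smoothness.} Fix a compact set $K\subset\R^3$ and a multi-index $\alpha$. For $x\in K$ and $z\in B_1(0)$, the point $x-T_\d z$ lies in the compact set $K+\overline{B_1(0)}$. There $|\partial^\alpha\phi|$ is bounded by some constant $M_{\alpha,K}$. Hence $|Q_\rho(z)\,\partial^\alpha_x \phi(x-T_\d z)|\le M_{\alpha,K}\,Q_\rho(z)$, and this majorant is integrable and independent of $x$. Dominated convergence then permits differentiation under the integral of every order, and continuity of the resulting integrals follows as well. This yields $\Qcald\phi\in C^\infty(\R^3)$ and
\[
\partial^\alpha \Qcald\phi=\Qcald(\partial^\alpha\phi).
\]
Taking $|\alpha|=1$ gives $\nabla\circ\Qcald=\Qcald\circ\nabla$ applied componentwise. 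For vector fields the same identity gives $\Div\circ\Qcald=\Qcald\circ\Div$. Both operators in \eqref{Dzero} are therefore well defined on $C^\infty$ and the two descriptions of each coincide.

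\emph{Integration by parts.} Let $\phi\in C_c^\infty(\R^3)$ and $\psi\in C_c^\infty(\R^3;\R^3)$. Then $\Qcald\phi$ and $\Qcald\psi$ have compact support, namely inside $\supp\phi+T_\d\overline{B_1(0)}$ and the analogous set for $\psi$. I write
\[
\int_{\R^3}\Dd\phi\cdot\psi\,dx=\int_{\R^3}\int_{\R^3}Q_\rho(z)\,\nabla\phi(x-T_\d z)\cdot\psi(x)\,dz\,dx .
\]
The integrand is bounded by $\|\nabla\phi\|_\infty\,\|\psi\|_\infty\,Q_\rho(z)\,\one_{\supp\psi}(x)$, which is integrable on $\R^3\times\R^3$, so Fubini applies. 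For each fixed $z$, the classical integration by parts gives
\[
\int_{\R^3}\nabla\phi(x-T_\d z)\cdot\psi(x)\,dx=-\int_{\R^3}\phi(x-T_\d z)\Div\psi(x)\,dx.
\]
Next I substitute $x=y+T_\d z$. This is a translation, so its Jacobian is $1$ even when $T_\d$ is degenerate, and it transforms the right-hand side into $-\int_{\R^3}\phi(y)\Div\psi(y+T_\d z)\,dy$. Integrating against $Q_\rho(z)$ and applying Fubini again produces
\[
-\int_{\R^3}\phi(y)\int_{\R^3}Q_\rho(z)\Div\psi(y+T_\d z)\,dz\,dy .
\]
The inner integral is an evaluation at $y+T_\d z$, while $\Qcald$ evaluates at $y-T_\d z$, so I need to match the sign. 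Because $\rho$ is radial, $Q_\rho$ is radial and in particular even. The substitution $z\mapsto -z$ therefore turns the inner integral into $\Qcald(\Div\psi)(y)=\divd\psi(y)$, which gives the claimed formula.

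There is no serious obstacle in this lemma. The points that need care are that $Q_\rho$ is singular at the origin, which is handled because only its integrability is used and never its pointwise size, and that $T_\d$ may be degenerate, which is handled because only translations by $T_\d z$ occur. The evenness of $Q_\rho$ must also be invoked explicitly, since the adjoint of $\Qcald$ involves reflected translations.
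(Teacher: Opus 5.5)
Your proposal is correct and follows the same route as the paper. Smoothness and the commutation identities come from differentiating under the integral, using that $Q_\rho$ is integrable with compact support. The integration by parts formula comes from the classical formula, the translation $x\mapsto x-T_\d z$, and the reflection $z\mapsto -z$, which uses the radial symmetry of $Q_\rho$. You simply supply the dominated-convergence and Fubini justifications that the paper leaves implicit.
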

\begin{proof}
    The smoothness of $\Qcald \phi$ is a consequence of the smoothness of $\phi$ and the fact that $Q_\rho$ is integrable and has compact support. The claims in \eqref{Dzero} follow immediately from pulling the derivative inside the integral.

    The integration by parts formula can be proved by exploiting its classical counterpart for local gradients as well as the changes of variables $\tilde{x} = x - T_\d z$ and $\tilde{z} = - z$, given that $Q_\rho$ is radial. 
\end{proof}

Now, Lemma \ref{lem:anisotropic_nonlocal_gradients} allows us to define our new nonlocal gradients in a weak sense.
We remind the reader here that $O_\d= O +T_\d B_1(0)$ for every subset $O\subset \R^3$ and $\d=(\bar{\d},\d_3)\in [0,1]^2$, where $T_\d B_1(0)$ is the ellipsoid with principal axes aligned with the coordinate axes and (possibly vanishing) principal radii $\bar{\d},\bar{\d},\d_3$, see Figure \ref{fig:ellipsoid}.

\begin{definition}[Weak anisotropic nonlocal gradients]\label{def:weak_limit_gradients}
    Let $O \subset \R^3$ be an open set, and $\d\in [0,1]^2$.
    We say that $U\in L^1_\loc(O;\R^3)$ is the weak nonlocal gradient of $u\in L^1_\loc(O_\delta)$, and write $\Dd u= U$, if
    $$\int_{O} U\cdot \psi \dd x= -\int_{O_\d} u \divd \psi \dd x\quad\text{for all }\psi\in C_c^\infty(O;\R^3).$$
    If $u\in L^1_\loc(O_\d;\R^3)$, then $\Dd u$ is defined component-wise as usual.
    Moreover, we introduce the space
    \begin{align*}
        H^{\rho,p}_{\delta}(O;\R^3) \coloneqq \{u\in L^p(\R^3;\R^3): u = 0 \text{ in  $(O_{\delta})^c$ and $(\Dd u)\restrict{O}\in L^p(O;\R^{3\times 3})$}\}
    \end{align*}
    equipped with the norm
    \begin{align*}
        \norm{u}_{\Hd{O}}\coloneqq \norm{u}_{L^p(\R^3;\R^3)} + \norm{\Dd u}_{L^p(O;\R^{3\times 3})} = \norm{u}_{L^p(O_\d;\R^3)} + \norm{\Dd u}_{L^p(O;\R^{3\times 3})}
    \end{align*}
    for $u\in \Hd{O}$ and set
    \begin{align}\label{cursed_functions}
        N^{\rho,p}_{\delta}(O;\R^3) \coloneqq \{u\in H^{\rho,p}_{\delta}(O;\R^3): (\Dd u)\restrict{O}= 0\}.
    \end{align}
\end{definition}

It is important to highlight that a vanishing nonlocal gradient does, in general, not imply constancy, see Remark~\ref{rem:N} and~\cite{KrS24} for more details.

\begin{remark}[Anisotropic nonlocal gradients of affine functions]
    If $\ffi_A(x):=Ax+b$ as an affine function with $A \in \R^{3\times 3}$ and $b \in \R^3$, then a direct computation yields that
    \[
        D_{\rho,\d}\ffi_A(x) = A \quad \text{for all $x \in \R^3$ and $\d \in [0,1]^2$,}
    \]
    which coincides with the classical gradient.
\end{remark}

\begin{remark}[Anisotropic nonlocal gradients with positive horizons]\label{rem:positive_horizon}
    Let $\delta\in (0,1]^2$. Similar to the typical radial case, the anisotropic nonlocal gradient \eqref{Dzero} can be rewritten as a convolution of the classical gradient with a suitably rescaled kernel. 
    Indeed, for $\phi \in C^\infty(\R^3)$ it holds that
    \begin{align}\label{eq:Qrhodelta}
        \Dd \ffi = \nabla\big(Q_{\rho,\d} \ast \ffi\big) = Q_{\rho,\d}\ast \nabla \ffi\quad \text{with}\quad  Q_{\rho,\d} = \det(T_{\d}^{-1})Q_{\rho}\circ T_{\d}^{-1},
    \end{align}
    cf.~\eqref{Qrho}. Note that, by a change of variables and \eqref{eq:normalizedQ}, we find that
    \begin{equation*}
       \norm{Q_{\rho,\d}}_{L^1(\R^3)}=1 \quad \text{for all $\d \in (0,1]^2$.}
    \end{equation*}
    Moreover, via a straightforward calculation, one shows that
    $$\Dd\ffi(x) = D_{\rho}\big(\ffi\circ T_{\d}\big)\big(T_{\d}^{-1}x\big) T_{\d}^{-1}\quad\text{for all }\ffi\in C^\infty(\R^3) \text{ and every }x\in\R^3,$$
    where $D_{\rho}$ is the typical common nonlocal gradient with horizon $1$ as in \eqref{eq:nonlocalgradient}.
\end{remark}

\begin{remark}[On the set $\Nd{O}$]\label{rem:N}
    a) If $O$ is connected, then the translation mechanism in Proposition \ref{prop:translation}\,($i$) below shows that $h \in \Nd{O}$ if and only if $h \in L^p(\R^3;\R^3)$ (with $h=0$ in $(O_\d)^c$) and there is some $c \in \R^3$ such that
    \[
    \Qcald h = c \quad \text{a.e.~in $O$.}
    \]
    Here, we assume that $\Qcald$ has been canonically extended to $L^p$ as in Proposition \ref{prop:translation}. Indeed, $\tilde{h}:=\Qcald h \in W^{1,p}(O;\R^3)$ satisfies $\nabla \tilde{h} = \Dd h$ and thus the characterization follows. \smallskip
    
    b) Suppose $O$ is a bounded Lipschitz domain and $\delta=(1,1)$. Then, it holds that $N^{\rho,p}(O;\R^3):=N^{\rho,p}_{\delta}(O;\R^3)$ is an infinite-dimensional space and hence, contains many non-constant functions. Indeed, this can be argued similarly as in the case for truncated fractional kernels from \cite[Proposition~3.3]{KrS24} using the more general results from \cite{BMS24}. In fact, in the case that $\rho$ is as in Example~\ref{ex:fractionalkernel} with $\chi$ locally constant around the origin, $p \in (1,\frac{2}{1-s})$ and $O$ is a $C^{1,1}$-domain, then it follows from \cite[Theorem~3.8]{KrS24} that for every boundary condition $g \in L^p(O_\d \setminus O;\R^3)$ and every $c \in \R^3$ there is a unique $h \in N^{\rho,p}(O;\R^3)$ with $h=g$ in $O_\d \setminus O$ and
    \[
    \Qcald h = c \quad \text{a.e.~in $O$.}
    \]
    This characterizes the set $N^{\rho,p}(O;\R^3)$ completely. When $p \geq \frac{2}{1-s}$, uniqueness still holds but existence may fail due to possible singularities forming at the boundary $\partial O$. This characterization does not readily extend to general kernels $\rho$, but we mention at least that uniqueness for $p \geq 2$ is still true by following the argument in \cite[Proposition~3.9]{KrS24} together with \cite[Lemma~4.3]{BMS24}. These facts will not be needed for our analysis though. \smallskip

    c) In light of Remark~\ref{rem:positive_horizon}, it immediately follows that for $\d \in (0,1]^2$ we have the identification
    \[
    \Nd{O} \cong N^{\rho,p}(T_\d^{-1}O;\R^3), \quad h \mapsto h \circ T_\d.
    \]
    Hence, the dependence on $\d$ reduces to a simple dependence on the domain. In particular, $\Nd{O}$ is still infinite-dimensional in light of part b) of this remark. \smallskip

    d) When one of the principal radii vanishes, the set $\Nd{O}$ can also be described using the dimensionally reduced nonlocal gradients. For example, if $\bar{\d}>0$ and $\d_3=0$, then in light of part a) of this remark and Lemma~\ref{le:2dgradient} below, we deduce that $h \in \Nd{O}$ if and only if
    \[
    \Qcal_{\bar{\rho},\bar{\d}}(h(\cdot,x_3)) = c \quad \text{a.e.~on $O_{x_3}$} \quad \text{with} \quad O_{x_3}:=\{\bar{x} \in \R^2 \,:\, (\bar{x},x_3) \in O\}.
    \]
    Thus, $h(\cdot,x_3)$ lies in the set $N^{\bar{\rho},p}_{\bar{\d}}(O_{x_3};\R^3)$ for a.e.~$x_3 \in \R$, with $c$ not depending on $x_3$.
\end{remark}

We are particularly interested in applications to thin films, where admissible limit deformations do not depend on the out-of-plane variable $x_3$. In this case, the anisotropic gradient above reduces to a two-dimensional (non)local gradient. 
\begin{lemma}[Dimensionally reduced (non)local gradient]\label{le:2dgradient}
    Let $\d \in [0,1]^2$, $\bar{\phi} \in C^{\infty}(\R^2;\R^3)$ and define $\phi(x) := \bar{\phi}(\bar{x})$ for $x \in \R^3$. Then, for every $x\in\R^3$ it holds that
    \[
    \Dd \phi (x) = \begin{cases}
        (\Dtwod \bar{\phi}(\bar{x}),0) &\text{if $\bar{\d} >0$,}\\
        (\bar{\nabla}\bar{\phi}(\bar{x}),0) &\text{if $\bar{\d}=0$,}
    \end{cases}
    \]
    where $\Dtwod$ is the two-dimensional in-plane analogue of \eqref{Dzero} (see also \eqref{eq:Qrhodelta}) based on a $Q_{\bar{\rho}}$ defined via \eqref{Qrho} and
    \[
    \bar{\rho}(\bar{z}):=\abs{\bar{z}}\int_{\R}\frac{\rho^{\rm rad}(\abs{\bar{z}}\sqrt{1+t^2})}{1+t^2}\,dt\quad \text{for }\bar{z} \in \R^2 \setminus \{0\}.
    \]
    Moreover, $\bar{\rho}$ satisfies the two-dimensional analogues of (H0)--(H4).
\end{lemma}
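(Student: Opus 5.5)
Since $\nabla\phi(x)=(\bar\nabla\bar\phi(\bar x),0)$ for all $x$, the identity \eqref{Dzero} $\Dd=\Qcald\circ\nabla$ immediately gives that the third column of $\Dd\phi$ vanishes. For the first two columns we compute directly
\[
\Qcald(\bar\nabla\bar\phi)(x)=\int_{\R^3}Q_\rho(z)\,\bar\nabla\bar\phi(\bar x-\bar\d\bar z)\,dz=\int_{\R^2}q(\bar z)\,\bar\nabla\bar\phi(\bar x-\bar\d\bar z)\,d\bar z,\qquad q(\bar z):=\int_{\R}Q_\rho(\bar z,z_3)\,dz_3,
\]
where Fubini is justified because $Q_\rho\in L^1$ and $\bar\nabla\bar\phi$ is bounded on compacts. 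In particular $\d_3$ drops out. If $\bar\d=0$, this equals $\bar\nabla\bar\phi(\bar x)\norm{Q_\rho}_{L^1}=\bar\nabla\bar\phi(\bar x)$ by \eqref{eq:normalizedQ}.

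If $\bar\d>0$, it remains to identify $q=Q_{\bar\rho}$ in the sense of \eqref{Qrho}. After that, the change of variables in Remark~\ref{rem:positive_horizon} yields $\Dtwod\bar\phi$. Write $r=|\bar z|$, so that $q(r)=\int_\R\int_{\sqrt{r^2+s^2}}^\infty\rho^{\rm rad}(t)t^{-1}\,dt\,ds$. Differentiating in $r$ under the integral gives
\[
q'(r)=-\int_\R \rho^{\rm rad}\big(\sqrt{r^2+s^2}\big)\frac{r}{r^2+s^2}\,ds .
\]
Substituting $s=rt$ turns this into $q'(r)=-\bar\rho(r)/r$. Both $q$ and $Q_{\bar\rho}$ vanish for $r\ge1$ by the support condition in (H0), so $q=Q_{\bar\rho}$.

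For (H0) in two dimensions, note that $\supp\bar\rho=\overline{\bar B_1(0)}$ and $\inf_{\bar B_{\eta_0}}\bar\rho>0$ follow from the formula for $\bar\rho$. For the normalization, I would use the radial identity $\int_{\R^n}Q_\rho=\frac1n\int_{\R^n}\rho$, obtained from Fubini in polar coordinates. Then $\int_{\R^2}\bar\rho=2\norm{Q_{\bar\rho}}_{L^1}=2\norm{Q_\rho}_{L^1}=2$.

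For (H1)--(H4), write $s_t:=\sqrt{1+t^2}$ and $w(t):=(1+t^2)^{-3/2}$. Rewriting the definition of $\bar\rho$ gives
\[
f_{\bar\rho}(r)=\bar\rho(r)=\int_\R f_\rho(rs_t)\,w(t)\,dt .
\]
Hence $r^\nu f_{\bar\rho}(r)=\int_\R (rs_t)^\nu f_\rho(rs_t)\,s_t^{-\nu}w(t)\,dt$, which is non-increasing by (H1). This is where the global (strengthened) monotonicity in (H1) is used.

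For (H3) and (H4), one writes similarly
\[
r^{1+\sigma}\bar\rho(r)=\int_\R (rs_t)^{2+\sigma}\rho^{\rm rad}(rs_t)\,s_t^{-(4+\sigma)}\,dt,
\]
and analogously with $\gamma$. Almost-monotonicity would transfer pointwise, but only where $rs_t<\eta_0$.

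I expect the main obstacle to be the region $rs_t\ge\eta_0$ in (H2)--(H4), i.e.\ $|t|\gtrsim\eta_0/r$. There the hypotheses on $\rho$ give no derivative or almost-monotonicity control. My plan is to show that this tail is negligible for $r<\eta_0/2$, say, relative to the main part. In the tail $f_\rho(rs_t)$ is bounded, being controlled via (H1) by its values near $\eta_0$, and the tail weight is $\lesssim(r/\eta_0)^2$. The main part, meanwhile, is bounded below by $c\,f_\rho(r)$ using (H1) and (H4), since $f_\rho$ blows up at least like $r^{-1-\gamma}$ near $0$.

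On the remaining range $r\in[\eta_0/2,\eta_0)$ all quantities are bounded above and below by compactness and smoothness, so the estimates hold with adjusted constants. For (H2), I would differentiate under the integral to get $f_{\bar\rho}^{(k)}(r)=\int f_\rho^{(k)}(rs_t)s_t^k w\,dt$. On $rs_t<\eta_0$ I would use $|f_\rho^{(k)}(rs_t)|s_t^k\le C_kf_\rho(rs_t)/r^k$, and treat the tail, where $f_\rho$ is smooth with bounded derivatives, by the same negligibility argument. If needed, I would shrink $\eta_0$ for $\bar\rho$.
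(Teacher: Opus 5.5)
Your proposal is correct and follows essentially the paper's route. You use Fubini to reduce to $\bar{Q}_\rho=\int_\R Q_\rho(\cdot,z_3)\,dz_3$ and then identify it with $Q_{\bar\rho}$, via $q'(r)=-\bar\rho(r)/r$ and vanishing for $r\ge 1$ where the paper uses the substitutions $t=\sqrt{z_3^2+r^2}$, $z_3=r\tilde z_3$. As in the paper, you get (H0) from $\int Q=\frac1n\int\rho$, (H1) from $f_{\bar\rho}(r)=\int_\R f_\rho(r\sqrt{1+t^2})(1+t^2)^{-3/2}\,dt$, and (H2)--(H4) by pointwise transfer with the region $r\sqrt{1+t^2}\ge\eta_0$ absorbed into the constants.

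One small slip: (H4) gives the upper bound $f_\rho(r)\lesssim r^{-1-\gamma}$, not a lower one (the lower blow-up rate $r^{-1-\sigma}$ comes from (H3)). Your negligibility argument does not need blow-up at all, since $f_\rho(r)\ge r\inf_{B_{\eta_0}(0)}\rho$ from (H0) already dominates the $O(r^2)$ tail.
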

\begin{proof}
    The case $\bar{\d}=0$ follows from a direct computation. If $\bar{\d} \not =0$, we find with $\bar{Q}_\rho(\bar{z}):=\int_{\R}Q_\rho(\bar{z},z_3)\,dz_3$ for $\bar{z}\in\R^2\setminus\{0\}$ that
    \[
    \Dd \phi (x) = ((\Qbar * \bar{\nabla} \bar{\phi}) (\bar{x}),0),
    \]
    where $\Qbar$ is the two-dimensional in-plane analogue of \eqref{eq:Qrhodelta}.
    Hence, it remains to prove that $\bar{Q}_\rho = Q_{\bar{\rho}}$ in the sense that
    \[
        \bar{Q}_\rho(\bar{z}) = \int_{\abs{\bar{z}}}^{\infty}\frac{\bar{\rho}^{\rm rad}(t)}{t}\,dt \quad \text{for all } \bar{z}\in \R^2\setminus\{0\}. 
    \]
    This follows from the following computation 
    \begin{align*}
		\int_\R Q_\rho(\bar{z},z_3)\dd z_3 &= \int_\R \int_{|z|}^{\infty} \frac{\rho^{\rm rad}(t)}{t}\dd t \dd z_3 = \int_\R \int_{\sqrt{z_3^2 +|\bar{z}|^2}}^{\infty}\frac{\rho^{\rm rad}(t)}{t}\dd t \dd z_3\\
		&= \int_\R \int_{|\bar{z}|}^{\infty}  \frac{r\rho^{\rm rad}\left(\sqrt{z_3^2 + r^2}\right)}{z_3^2 + r^2}\dd r \dd z_3 =\int_{|\bar{z}|}^{\infty}\int_\R \frac{r^2\rho^{\rm rad}\left(\sqrt{\tilde{z}_3^2r^2 + r^2}\right)}{\tilde{z}_3^2r^2 + r^2}\dd \tilde{z}_3 \dd r\\
		&=\int_{|\bar{z}|}^{\infty} \int_\R \frac{\rho^{\rm rad}\left(r\sqrt{1+\tilde{z}_3^2}\right)}{1+\tilde{z}_3^2}\dd \tilde{z}_3 \dd r\ 
			= \int_{|\bar{z}|}^{\infty} \frac{\bar{\rho}^{\rm rad}(r)}{r}\dd r,
        \end{align*}
        where we have used the substitutions $t=\sqrt{z_3^2+r^2}$ and $z_3=r\tilde{z}_3$. The conditions (H0)--(H4) can be verified in a straightforward manner.
        Indeed, (H0) is a consequence of
        \[
        \int_{\R^2} \bar{\rho}(\bar{z})\,d\bar{z}= 2 \int_{\R^2} \bar{Q}_\rho(\bar{z})\,d\bar{z}=2\norm{Q_\rho}_{L^1(\R^3)}=2,
        \]     
        using polar coordinates, the behavior of $\rho$ from (H0) and (H3). 
        Condition (H1) is immediate from the definition of $\bar{\rho}$. For condition (H2), we compute that
        \[
        f_{\bar{\rho}}(r) = \int_{\R} \frac{f_\rho(r\sqrt{1+t^2})}{(1+t^2)^{3/2}}\dd t,
        \]
        so that by interchanging derivatives and integrals we infer (H2) from the same condition on $\rho$; note that (H2) only holds on $(0,\eta_0)$ while the integral also integrates beyond this point, but the extra part is a smooth and bounded function which can be accounted for in the constants $C_k$. Finally, (H3) and (H4) also follow in a straightforward manner from the definition of $\bar{\rho}$, again using that the integral for large values is smooth and can be absorbed into the constant of the almost monotonicity.
\end{proof}
\begin{remark}\label{rem:rhobaradmissible}    
    a) The previous lemma shows that $\Dtwod$ is again a nonlocal gradient that fits into the setting of nonlocal gradients of \cite{BMS24}, but in dimension 2. We will thus use the notation
    \begin{align}\label{Htwod}
        \begin{split}
            H^{\bar{\rho},p}_{\bar{\d}}(O;\R^3) &= \{u\in L^p(\R^2;\R^3) : u = 0 \text{ in $(O_\d)^c$  and }\Dtwod u \in L^p(O;\R^{3\times 2})\},\\
            N^{\bar{\rho},p}_{\bar{\d}}(O;\R^3) &= \{u\in H^{\bar{\rho},p}_{\bar{\d}}(O;\R^3) : (\Dtwod u)\restrict{O}=0\}
        \end{split}
    \end{align}
    for open sets $O\subset \R^2$. \smallskip

    b) Let $\Omega=\omega\times I$ for an open set $\omega\subset \R^2$ and a bounded open interval $I\subset \R$. Via distributional arguments as in the classical local case, one shows that if $u\in \Hd{\Omega}$ and $u(x) = \mathbbm{1}_{O_\d}(x)\bar{u}(\bar{x})$ for almost every $x\in \R^3$ and some $\bar{u}\colon \omega_\d\to \R^3$, then it holds that
    $$\mathbbm{1}_{\omega_\d}\bar{u}\in H^{\bar{\rho},p}_{\bar{\d}}(\omega;\R^3) \quad\text{with}\quad \bar{D}_{\rho,\d}u(x) = D_{\bar{\rho},\bar{\d}} \bar{u}(\bar{x}) \text{ for a.e.~}x\in\Omega,$$
    where $\bar{D}_{\rho,\d}$ describes the first two columns of $\Dd$.
    
    \smallskip

    c) If $\rho$ is as in Example~\ref{ex:fractionalkernel}, then it can be computed that
    \[
    \bar{\rho}(\bar{z}):=\frac{\bar{\chi}(\abs{\bar{z}})}{\abs{\bar{z}}^{1+s}},\ \bar{z}\in \R^2\setminus\{0\} \qquad \text{with} \qquad \bar{\chi}(r):=\int_{\R}\frac{\chi(r\sqrt{1+t^2})}{(\sqrt{1+t^2})^{4+s}}\,dt,\ r\in[0,\infty).
    \]
    Hence, $D_{\bar{\rho}}$ is again a truncated fractional gradient of order $s$.
\end{remark}

In the following, we discuss the Leibniz rule for anisotropic nonlocal gradients, beginning with two smooth functions.
This will then be used to prove the density of $C_c^\infty$-functions in $H^{\rho,p}_\d$.

\begin{lemma}[Leibniz rule for smooth functions]\label{lem:leibniz}
    Let $\delta \in [0,1]^2$ and $u \in C_c^{\infty}(\R^3;\R^3)$ and $\chi \in C^{\infty}(\R^3)$ with bounded derivative. Then, it holds that
    \[
    \Dd (\chi u) = \chi \Dd u + K_\d^{\chi}(u),
    \]
    where
    \begin{align}\label{K_chi}
    K^{\chi}_\d(u)(x):=\int_{\R^3} Q_\rho(z)\left[(\chi(x-T_\d z)-\chi(x))\nabla u(x-T_\d z) + u(x-T_\d z)\nabla \chi(x-T_\d z)\right]\,dz.
    \end{align}
    Moreover, there exists a $C>0$ independent of $\d$ such that
    \begin{equation}\label{eq:leibnizest1}
    \norm{K^{\chi}_\d(u)(e_1 | e_2)}_{L^p(\R^3)} \leq \begin{cases} \displaystyle\frac{C}{\bar{\d}}\norm{\nabla \chi}_{L^{\infty}(\R^3)}\norm{u}_{L^{p}(\R^3)} &\text{if $\bar{\d} > 0$,}\\
    C(\norm{\bar{\nabla} \chi}_{L^{\infty}(\R^3)}\norm{u}_{L^{p}(\R^3)}+\norm{\partial_3\chi}_{L^{\infty}(\R^3)}\norm{\bar{\nabla} u}_{L^p(\R^3)}) &\text{if $\bar{\d}=0$,}
    \end{cases}
    \end{equation}
    and
    \begin{equation}\label{eq:leibnizest2}
    \norm{K^{\chi}_\d(u)e_3}_{L^p(\R^3)} \leq \begin{cases} \displaystyle\frac{C}{\d_3}\norm{\nabla \chi}_{L^{\infty}(\R^3)}\norm{u}_{L^{p}(\R^3)} &\text{if $\d_3 > 0$,}\\
    C(\norm{\partial_3 \chi}_{L^{\infty}(\R^3)}\norm{u}_{L^{p}(\R^3)}+\norm{\bar{\nabla}\chi}_{L^{\infty}(\R^3)}\norm{\partial_3 u}_{L^p(\R^3)}) &\text{if $\d_3=0$.}
    \end{cases}
    \end{equation}
\end{lemma}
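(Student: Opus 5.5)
The plan is to verify the identity by a direct computation, and then to prove the estimates by integrating by parts in the $z$-variable along the relevant direction. That integration by parts produces a cancellation with the $u\nabla\chi$ term.

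\emph{Identity.} Since $u$ has compact support and $\chi$ is smooth, we have $\chi u\in C_c^\infty$. By Lemma~\ref{lem:anisotropic_nonlocal_gradients}, $\Dd(\chi u)=\Qcald(\nabla(\chi u))$, and the local product rule gives
\[
\Dd(\chi u)(x)=\int_{\R^3}Q_\rho(z)\big[\chi(x-T_\d z)\nabla u(x-T_\d z)+u(x-T_\d z)\otimes\nabla\chi(x-T_\d z)\big]\,dz .
\]
Subtracting $\chi(x)\Dd u(x)=\int Q_\rho(z)\chi(x)\nabla u(x-T_\d z)\,dz$ yields exactly $K^\chi_\d(u)$ as in \eqref{K_chi}.

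\emph{Estimate \eqref{eq:leibnizest1}, case $\bar\d>0$.} Fix $j\in\{1,2\}$. We have $\partial_{z_j}[u(x-T_\d z)]=-\bar\d\,\partial_j u(x-T_\d z)$. Hence, integrating by parts in $z_j$,
\[
\int Q_\rho(z)\big(\chi(x-T_\d z)-\chi(x)\big)\partial_j u(x-T_\d z)\,dz=\frac1{\bar\d}\int \partial_{z_j}\!\big[Q_\rho(z)(\chi(x-T_\d z)-\chi(x))\big]\,u(x-T_\d z)\,dz .
\]
The product rule gives two terms, $\partial_jQ_\rho(z)(\chi(x-T_\d z)-\chi(x))$ and $-\bar\d Q_\rho(z)\partial_j\chi(x-T_\d z)$. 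After the factor $1/\bar\d$, the second term cancels exactly with the $j$-th column of the $u\otimes\nabla\chi$ part of $K^\chi_\d(u)$. Therefore
\[
K^\chi_\d(u)e_j(x)=\frac1{\bar\d}\int \partial_jQ_\rho(z)\big(\chi(x-T_\d z)-\chi(x)\big)u(x-T_\d z)\,dz .
\]

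We then use two facts. First, $\nabla Q_\rho(z)=-\rho(z)z/|z|^2$ by \eqref{Qrho}. Second, $|\chi(x-T_\d z)-\chi(x)|\le\norm{\nabla\chi}_{L^\infty}|z|$, because $\d\in[0,1]^2$. Together these give the pointwise bound
\[
|K^\chi_\d(u)e_j(x)|\le \frac{\norm{\nabla\chi}_{L^\infty}}{\bar\d}\int\rho(z)\,|u(x-T_\d z)|\,dz .
\]
Minkowski's integral inequality and $\int\rho=3$ from (H0) then give the claim.

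The main technical point is justifying this integration by parts near the singularity of $Q_\rho$ at $0$. I would first integrate over $\{|z|>\eta\}$ and then let $\eta\to0$. The resulting boundary term is bounded by $C\eta^{2}\sup_{|z|=\eta}Q_\rho(z)\,|z|$, i.e.\ by $C\eta^3Q_\rho^{\rm rad}(\eta)$. This tends to $0$ because $Q_\rho\in L^1$ and $Q_\rho^{\rm rad}$ is non-increasing, which gives $\eta^3Q_\rho^{\rm rad}(\eta)\lesssim\int_{B_\eta}Q_\rho\to0$. Absolute integrability of the limit integrand follows from $|\nabla Q_\rho|\,|z|=\rho\in L^1$. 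Since $Q_\rho$ has compact support, there are no boundary contributions at infinity.

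\emph{Estimate \eqref{eq:leibnizest1}, case $\bar\d=0$.} Here $T_\d z=\d_3z_3e_3$, so no integration by parts is needed. The difference satisfies $|\chi(x-T_\d z)-\chi(x)|\le\norm{\partial_3\chi}_{L^\infty}\d_3|z_3|\le\norm{\partial_3\chi}_{L^\infty}$ on $\supp Q_\rho\subset\overline{B_1(0)}$. Applying Minkowski's inequality with $\norm{Q_\rho}_{L^1}=1$ to both terms of $K^\chi_\d(u)e_j$ gives the bound by $\norm{\partial_3\chi}_\infty\norm{\bar\nabla u}_p+\norm{\bar\nabla\chi}_\infty\norm{u}_p$.

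\emph{Estimate \eqref{eq:leibnizest2}.} This is the same argument with the roles exchanged. For $\d_3>0$, integrate by parts in $z_3$, using $\partial_{z_3}[u(x-T_\d z)]=-\d_3\partial_3u(x-T_\d z)$; this produces the factor $1/\d_3$ and cancels the $\partial_3\chi$ term. For $\d_3=0$, estimate directly, now with $|\chi(x-T_\d z)-\chi(x)|\le\norm{\bar\nabla\chi}_\infty$.

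All constants depend only on $\int\rho$ and $\norm{Q_\rho}_{L^1}$, so they are independent of $\d$.
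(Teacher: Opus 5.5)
Your proposal is correct and follows essentially the same route as the paper. Both arguments obtain the identity from the classical product rule, then integrate by parts in the relevant $z$-direction so that the $u\nabla\chi$ term cancels, and then combine $\nabla Q_\rho(z)=-\rho(z)z/|z|^2$, the Lipschitz bound on $\chi$ and Minkowski's inequality, with a direct estimate when $\bar\d=0$ or $\d_3=0$. Your handling of the boundary term at the singularity of $Q_\rho$ is a justification the paper leaves implicit: since $Q_\rho^{\rm rad}$ is non-increasing and $Q_\rho$ is integrable, $\eta^3 Q_\rho^{\rm rad}(\eta)\to 0$.
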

\begin{proof}
    Using the definition of $\Dd$ in \eqref{Dzero} and the classical Leibniz rule, we find that
    \begin{align*}
    \Dd (\chi u) &= \Qcald (\nabla(\chi u)) = \Qcald(\chi \nabla u+u\nabla \chi) \\
    &= \chi\Dd u + (\Qcald (\chi \nabla u)-\chi\Qcald (\nabla u)+\Qcald(u \nabla \chi)) = \chi \Dd u + K^{\chi}_\d(u).
    \end{align*}
    For the estimates, we only prove \eqref{eq:leibnizest2} since the proof of \eqref{eq:leibnizest1} is entirely analogous. Suppose that $\d_3>0$, then we can use integration by parts on the first term of $K^{\chi}_\d(u)e_3$ in \eqref{K_chi} to find
    \begin{align*}
        &\int_{\R^3} Q_\rho(z)(\chi(x-T_\d z)-\chi(x))\partial_3 u(x-T_\d z)\,dz \\
        &= \int_{\R^3} \frac{1}{\d_3}\partial_3 Q_\rho(z)(\chi(x-T_\d z)-\chi(x))u(x-T_\d z) - Q_\rho(z)\partial_3\chi(x-T_\d z)u(x-T_\d z)\,dz.
    \end{align*}
    This second term cancels with the second term of $K^{\chi}_\d(u)$, so we find that
    \begin{align}\label{K_chi_integrated}
        K^\chi_\d(u)(x)e_3&= \int_{\R^3} \frac{1}{\d_3}\partial_3 Q_\rho(z)(\chi(x-T_\d z)-\chi(x))u(x-T_\d z)\,dz \nonumber\\
        &=\int_{\R^3} \frac{1}{\d_3}\frac{-z_3\rho(z)}{\abs{z}^2}(\chi(x-T_\d z)-\chi(x))u(x-T_\d z)\,dz,
    \end{align}
    where the second line uses the definition of $Q_\rho$ from \eqref{Qrho}. Using the Lipschitz continuity of $\chi$, we find that
    \begin{align*}
         \abs{K^\chi_\d(u)(x)e_3} &\leq \frac{1}{\delta_3}\norm{\nabla \chi}_{L^{\infty}(\R^3)} \int_{\R^3} \frac{\abs{z_3}\abs{T_\d z}}{\abs{z}^2}\rho(z)\abs{u(x-T_\d z)}\,dz \\
         &\leq \frac{1}{\delta_3}\norm{\nabla \chi}_{L^{\infty}(\R^3)} \int_{\R^3}\rho(z)\abs{u(x-T_\d z)}\,dz,
    \end{align*}
     since $\abs{z_3}\abs{T_\d z}\leq \abs{z}^2$. We conclude the first estimate in \eqref{eq:leibnizest2} after applying Minkowski's integral inequality and a change of variables. 

    For the case $\d_3 =0$, we directly use the definition of $K^\chi_\d(u)$ to find that
    \begin{align*}
        \abs{K^\chi_\d(u)(x)e_3} \leq \norm{\bar{\nabla}\chi}_{L^{\infty}(\R^3)}\int_{\R^3}Q_\rho(z)\abs{T_\d z}\abs{\partial_3 u(x-T_\d z)}\,dz + \norm{\partial_3 \chi}_{L^{\infty}(\R^3)}\int_{\R^3}Q_\rho(z)\abs{u(x-T_\d z)}\,dz,
    \end{align*}
    where we have used that $(x-T_\d z)\cdot e_3 = x_3$ for the first term in order to only have to use a Lipschitz estimate in the first two variables. Using that $\abs{T_\d z} \leq \abs{z}$ together with the fact that $Q_\rho$ and $\abs{\cdot}Q_\rho$ are integrable, we can conclude as before with Minkowski's integral inequality.
\end{proof}

\begin{remark}[Some properties of $K^\chi_\delta(u)$]\label{rem:Kchi}
    a) Similar to \eqref{K_chi_integrated}, we write for reference the expression for the first two components of $K^\chi_\d(u)$ when $\bar{\delta}>0$:
    \begin{equation}\label{K_chi12}
         K^\chi_\d(u)(x)(e_1 | e_2)=\int_{\R^3} \frac{1}{\bar{\delta}}\frac{-\bar{z}\rho(z)}{\abs{z}^2}(\chi(x-T_\d z)-\chi(x))u(x-T_\d z)\,dz.
    \end{equation}

    b) The quantity $K^\chi_\d(u)$ in \eqref{K_chi}, or an alternative formulation, exists also for less regular functions $u\in L^p(\R^3;\R^3)$ with suitable properties.
    
    For example, if $\bar{\d}>0$ and $\d_3 >0$, then we use the formulations \eqref{K_chi_integrated} and \eqref{K_chi12} instead of \eqref{K_chi}, which exist almost everywhere without any additional assumptions on $u$.
    If only one component of $\d$ vanishes, say $\d_3 = 0$, and $\bar{\nabla} \chi = 0$, then \eqref{K_chi} does not require any extra regularity for $u$ as it does not see $\nabla u$ anymore since it is multiplied by zero.
    On the other hand, if $\d_3 =0$ and $\bar{\nabla}\chi \neq 0$, then the last column of \eqref{K_chi} exists almost everywhere if $u$ has at least an integrable weak $x_3$-derivative; 
    for the first two columns, we employ the form \eqref{K_chi12} to avoid derivatives of $u$.
    One argues analogously for $\bar{\d}=0$ and $\d_3>0$.
    
    These case studies and the different representations of $K^\chi_\d$ are a central part of the next lemma.    
\end{remark}

We now deal with the Leibniz rule of smooth functions and ones in $\Hd{\Omega}$ for open and bounded sets $\Omega\subset\R^3$. 
The proof essentially follows the structure of \cite[Lemma~12]{CKS23} and uses the calculations and estimates in Lemma \ref{lem:leibniz}.
\begin{lemma}[Leibniz rule in $H^{\rho,p}_\d$]\label{lem:product}
    Let $\Omega,\Omega'\subset \R^3$ be open and bounded, and $\d\in [0,1]^2$. 
    Given $u\in \Hd{\Omega}$ and $\chi \in C^ \infty(\R^3)$, their product satisfies $\chi u \in L^p(\Omega'_{\delta};\R^3)$ and $\Dd(\chi u) \in L^p(\Omega';\R^{3\times 3})$, i.e., $\chi u \in \Hd{\Omega'}$ if 
    \begin{align}\label{support}
        (\Omega'\setminus\Omega)\cap\supp(\chi)  = \emptyset
    \end{align}
    and if one of the following conditions is satisfied
    \begin{itemize}
        \item[a)] $\bar{\d}>0$ and $\d_3 >0$;
        \item[b)] $\bar{\d}>0$ and $\d_3=0$ and
        \begin{itemize}
            \item[$(i)$] $\bar{\nabla}\chi =0$ or
            \item[$(ii)$] the weak derivative $\partial_3u$ exists in $L^p(\Omega_\delta;\R^3)$;
        \end{itemize}
        \item[c)] $\bar{\d}=0$ and $\d_3 >0$ and
        \begin{itemize}
            \item[$(i)$] $\partial_3 \chi =0$ or
            \item[$(ii)$] the weak derivative $\bar{\nabla}u$ exists in $L^p(\Omega_\delta;\R^{3\times 2})$.
        \end{itemize}
    \end{itemize}
\end{lemma}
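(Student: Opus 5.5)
The plan is to identify the weak nonlocal gradient of $\chi u$ on $\Omega'$ explicitly as
\[
\Dd(\chi u)=\chi\,\Dd u+K^\chi_\d(u),
\]
with $K^\chi_\d(u)$ taken in whichever of the representations \eqref{K_chi}, \eqref{K_chi_integrated}, \eqref{K_chi12} makes sense in the given case (Remark~\ref{rem:Kchi}\,b)). We then show that both terms lie in $L^p(\Omega';\R^{3\times 3})$. The integrability $\chi u\in L^p(\Omega'_\d;\R^3)$ is immediate on bounded sets because $\chi$ is smooth. Since only values of $\chi$ near the bounded set $\Omega'_\d\cup\Omega_\d$ matter, we may first multiply $\chi$ by a cutoff equal to one there. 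This lets us assume that $\chi$ has bounded derivatives, so Lemma~\ref{lem:leibniz} applies. The first term is fine: by \eqref{support}, $\chi\,\Dd u$ vanishes on $\Omega'\setminus\Omega$ and equals $\chi\,\Dd u\in L^p$ on $\Omega'\cap\Omega$.

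For the weak identity, fix $\psi\in C_c^\infty(\Omega';\R^3)$ and compute $-\int \chi u\,\divd\psi\,dx$. Following \cite[Lemma~12]{CKS23}, we rewrite $\chi\,\divd\psi=\divd(\chi\psi)-(\text{commutator})$, where the commutator is the adjoint of $K^\chi_\d$ and acts only on $\psi$. The function $\chi\psi$ is supported in $\supp\chi\cap\Omega'\subset\Omega$ by \eqref{support}, so it is an admissible test function for $\Dd u$ on $\Omega$. The second term, after Fubini and the changes of variables from Lemma~\ref{lem:anisotropic_nonlocal_gradients}, becomes $\int K^\chi_\d(u)\cdot\psi\,dx$. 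This manipulation is clean for smooth $u$ by Lemma~\ref{lem:leibniz}. For a rough $u$, I would instead verify the identity for mollified $u_\eps=u*\eta_\eps$ tested against fixed $\psi$ and then pass to the limit. The convergence $u_\eps\to u$ in $L^p$ handles the left side and $\chi\,\Dd u$ in duality form. For the $K$-term, the estimates \eqref{eq:leibnizest1}--\eqref{eq:leibnizest2} give continuity $L^p\to L^p$ in case a). In cases b)(ii) and c)(ii), continuity holds in the graph norm including $\partial_3u$ or $\bar\nabla u$, which also converges under mollification on slightly smaller sets.

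The case analysis then reads as follows. In case a), \eqref{K_chi_integrated} and \eqref{K_chi12} bound $K^\chi_\d(u)$ in $L^p$ by $C(\bar\d^{-1}+\d_3^{-1})\|\nabla\chi\|_\infty\|u\|_{L^p}$. In case b)(i), the third column of \eqref{K_chi} has factor $\chi(x-T_\d z)-\chi(x)=0$, because $T_\d z$ is purely in-plane and $\bar\nabla\chi=0$. It therefore involves no derivative of $u$, while the first two columns use \eqref{K_chi12}. In case b)(ii), the second line of \eqref{eq:leibnizest2} gives the bound using $\|\partial_3u\|_{L^p}$. Case c) is symmetric, with \eqref{K_chi_integrated} for the third column and \eqref{eq:leibnizest1} for the others.

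I expect the main obstacle to be the rigorous duality step in the degenerate cases. There $u$ is only known to have $\Dd u\in L^p$ on $\Omega$, not everywhere, and mollification interacts with the support restriction: $u_\eps$ is no longer zero outside $\Omega_\d$, and $\Dd u_\eps=\Dd u*\eta_\eps$ only on $\Omega$ shrunk by $\eps$. I would handle this by working with the dual formulation directly, so that only $\int u\,\cdot(\text{adjoint operator applied to }\psi)$ appears. Since $\psi$ has compact support and $\chi\psi$ is compactly supported in $\Omega$, every integral involves only $u$ on $\Omega'_\d\cup\Omega_\d$. Mollification then commutes with all operators involved for small $\eps$, and the passage to the limit needs only $L^p$-convergence of $u_\eps$ and, in cases (ii), of its partial derivative.
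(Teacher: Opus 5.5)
Your proposal is correct and is essentially the paper's argument. You test with $\psi\in C_c^\infty(\Omega';\R^3)$, split $\divd(\chi\psi)=\chi\divd\psi+L^\chi_\d(\psi)$, and use \eqref{support} to get $\chi\psi\in C_c^\infty(\Omega;\R^3)$. You then move the commutator onto $u$ by Fubini and a change of variables in the derivative-free representations \eqref{K_chi_integrated}, \eqref{K_chi12} (resp.\ \eqref{K_chi} with the weak derivative in the degenerate direction in cases b)(ii), c)(ii)), and estimate as in Lemma~\ref{lem:leibniz}. The only difference is your mollification detour, which is unnecessary: as in the paper, this adjoint transfer works directly for $u\in L^p$.
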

\begin{proof}
    We prove here only the cases a), b) $i)$ and b) $ii)$. The case c) can be handled analogously to b). Let $\ffi\in C_c^\infty(\Omega';\R^3)$ be given, then $\chi\ffi\in C_c^\infty(\Omega;\R^3)$ since \eqref{support} and it holds that $\divd (\chi\ffi) = \chi \divd \ffi + L_\d^{\chi}(\ffi)$ with
    \begin{align}\label{L_chi}
        L^{\chi}_\d(\ffi)(x):=\sum_{i=1}^3K_\d^\chi(\ffi_i)e_i
    \end{align}
    where $K_\d^\chi(\ffi_i)$ is given as in \eqref{K_chi} for scalar-valued functions. 
    Now we distinguish all the different cases.
    \smallskip
    
    a) If $\bar{\d},\d_3>0$, then we use for $K_\d^\chi(\ffi_i)e_i$ the representation as in \eqref{K_chi_integrated} and \eqref{K_chi12}, which emerges through integration by parts.
    Using Fubini's theorem, a suitable change of coordinates, together with $\supp \phi \subset \Omega'$ and $u\in \Hd{\Omega}$, we obtain that
    \begin{align*}
        \int_{\Omega'_\d}(\chi u)\divd \ffi \dd x &= \int_{\Omega'_\d}u \big(\divd(\chi\ffi) - L^\chi_\d(\ffi)\big) \dd x 
        = -\int_{\Omega}\Dd u (\chi\ffi)\dd x - \sum_{i=1}^3 \int_{\Omega_\d'}uK_\d^\chi(\ffi_i)e_i\dd x\\
        &=-\int_{\Omega'}\Dd u (\chi\ffi)\dd x - \int_{\Omega'} \ffi_1K^\chi_\d(u)e_1 + \ffi_2K^\chi_\d(u)e_2 +\ffi_3K_\d^\chi(u)e_3\dd x\\
        &=-\int_{\Omega'}\big(\chi\Dd u +  K_\d^\chi(u)\big)\ffi\dd x.
    \end{align*}
    The quantities $K^\chi_\d(u)e_i$ for all $i\in\{1,2,3\}$ are written in the form of \eqref{K_chi_integrated} and \eqref{K_chi12} so that no weak derivatives of $u$ are needed, see Remark \ref{rem:Kchi}.
    This proves that 
    $$\Dd (\chi u) = \chi\Dd u +  K_\d^\chi(u)$$
    on $\Omega'$.
    The first term is in $L^p(\Omega';\R^{3\times 3})$ due to \eqref{support} and the second one can be estimated as in Lemma~\ref{lem:leibniz}. 
    \smallskip
    
    b) $i)$ If now $\d_3=0$ and $\partial_1\chi=\partial_2\chi=0$, then \eqref{L_chi} turns into
    \begin{align*}
        L^{\chi}_\d(\ffi)(x):=\int_{\R^3} Q_\rho(z)\partial_3\chi(x)\ffi_3(x-T_\d z)\dd z =\int_{\R^3} Q_\rho(z)\nabla\chi(x)\cdot\ffi(x-T_\d z)\dd z
    \end{align*}
    so that
    \begin{align*}
        \int_{\Omega'_\d}(\chi u)\divd \ffi \dd x &= \int_{\Omega'_\d}u \big(\divd(\chi\ffi) - L^\chi_\d(\ffi)\big) \dd x 
        = -\int_{\Omega}\Dd u (\chi\ffi)\dd x - \int_{\Omega'_\d}uL^\chi_\d(\ffi) \dd x\\
        &=-\int_{\Omega}\Dd u (\chi\ffi)\dd x + \int_{\Omega'}\Big(\int_{\R^3}Q_\rho(z)u(x-T_\d z)\nabla\chi(x)\dd z\Big)\ffi(x)\dd x\\
        &=-\int_{\Omega'}\Big(\chi(x)\Dd u(x) + \int_{\R^3}Q_\rho(z)u(x-T_\d z)\nabla\chi(x)\dd z \Big)\ffi(x)\dd x.
    \end{align*}
    In this case, we find 
    $$\Dd (\chi u) = \chi\Dd u + \nabla \chi\cdot\Qcald u$$
    on $\Omega'$. The first term is as before and the second one can be estimated analogously to Lemma \ref{lem:leibniz}.
    \smallskip
    
    $ii)$ On the other hand, if $\d_3=0$ and the weak derivative $\partial_3u$ exists and lies in $L^p(\Omega_\d;\R^3)$, then we use \eqref{L_chi} with $K_\d^\chi(\ffi_i)$ as in \eqref{K_chi12} for the cases $i=1$ and $i=2$, while we use \eqref{K_chi} for $i=3$.
    We repeat a similar argument as before to obtain
    \begin{align*}
        \int_{\Omega'_\d}(\chi u)\divd \ffi \dd x &= \int_{\Omega'_\d}u \big(\divd(\chi\ffi) - L^\chi_\d(\ffi)\big) \dd x 
        = -\int_{\Omega'}\Dd u (\chi\ffi)\dd x - \int_{\Omega'_\d}uL^\chi_\d(\ffi) \dd x\\
        &=-\int_{\Omega'}(\chi\Dd u) \ffi\dd x - \sum_{i=1}^3 \int_{\Omega_\d'}uK_\d^\chi(\ffi_i)e_i\dd x=-\int_{\Omega'}\big(\chi\Dd u + K^\chi_\d(u)\big) \ffi\dd x.
    \end{align*}
    Note that the first and second column of $K_\d^\chi(u)$ are written in the form \eqref{K_chi12} as to avoid weak differentiability of $u$ in the in-plane variables. For the third column, we exploit that \eqref{K_chi} also holds with $\partial_3 u$ as the weak derivative, cf.~Remark \ref{rem:Kchi}. All terms above can now be estimated just like in Lemma \ref{lem:leibniz}.
\end{proof}
\begin{remark}
    We note that the additional assumptions in Lemma~\ref{lem:product} b) and c) are not merely of a technical nature. For example, in the setting b) and with $\Omega=\omega \times I$, we can consider a non-constant function $\bar{v} \in N^{\bar{\rho},p}_{\bar{\d}}(\omega;\R^3)$ with $\Qcaltwod \bar{v}(\bar{x})=0$, which exists in light of Remark~\ref{rem:N}. Then, for any $w \in L^p(\R)$ it holds that $u(x):=\mathbbm{1}_{\O_\d}(x)v(\bar{x})w(x_3) \in \Nd{\O}$. Indeed, one has that
    \[
    \Qcald u(x) = w(x_3)\Qcaltwod \bar{v}(\bar{x}) =0 \quad \text{for a.e.~$x \in \O$.}
    \]
    However, once we multiply $\bar{v}$ by a function $\bar{\chi} \in C^{\infty}(\R^2)$, it may happen that $\Qcaltwod(\bar{\chi}\bar{v}) \not =0$. In that case, and if $w|_{I} \not \in W^{1,p}(I)$, we find that
    \[
    \Qcald(\bar{\chi}u)(x) = w(x_3)\Qcaltwod(\bar{\chi}\bar{v})(\bar{x}),
    \]
    is not a $W^{1,p}(\O;\R^3)$ function. In light of Proposition \ref{prop:translation}\,($i$) below, this shows $\bar{\chi} u \not \in \Hd{\O}$.
\end{remark}

Having established the Leibniz rule, we turn to the density of $C_c^\infty(\R^3;\R^3)$ in $\Hd{\O}$ on cylindrical bounded Lipschitz domains $\Omega$. 
After a regularization in the out-of-plane variable, the proof follows the same arguments as in \cite[Theorem~1 and Lemma~13]{CKS23}.

\begin{proposition}[Density on bounded Lipschitz domains]\label{prop:density}
    Let $\Omega=\omega\times I$ with $\omega\subset\R^2$ a bounded Lipschitz domain, $I\subset \R$ an open interval and let $\d\in[0,1]^2$. For each $u \in \Hd{\O}$, there exists a sequence $(\phi_k)_k \subset C_c^{\infty}(\R^3;\R^3)$ such that $\phi_k \to u$ in $\Hd{\Omega}$, that is, $\phi_k \to u$ in $L^p(\Omega_{\d};\R^3)$ and $\Dd \phi_k \to \Dd u$ in $L^p(\Omega;\R^{3\times 3})$.
\end{proposition}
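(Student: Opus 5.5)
We follow the strategy of \cite[Theorem~1 and Lemma~13]{CKS23}: localize with a partition of unity adapted to the Lipschitz boundary of $\omega$ and to the two ends of $I$, push each localized piece slightly inward, and mollify. The new ingredient is that in the degenerate cases $\d_3=0$ or $\bar{\d}=0$ the Leibniz rule (Lemma~\ref{lem:product}) only applies when the cutoff depends on the favourable variables or when $u$ has extra weak regularity. We therefore first regularize in the variable whose horizon vanishes.

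\textbf{Step 1 (regularization in the degenerate direction).} Assume $\d_3=0$; the case $\bar{\d}=0$ is symmetric and $\d\in(0,1]^2$ needs no step. Let $\eta_\tau(x_3)$ be a one-dimensional mollifier and set $u_\tau:=\eta_\tau *_3 u$, convolving only in $x_3$. Since $\Qcald$ is a convolution with a kernel that, when $\d_3=0$, acts only in $\bar{x}$, it commutes with $\eta_\tau*_3$ by Fubini on test functions. Hence $\Dd u_\tau=\eta_\tau*_3\Dd u$ on the slightly shrunken cylinder $\omega\times I^{\tau}$. To avoid losing the ends of $I$, we first stretch vertically about the center of $I$, $u^\lambda(\bar{x},x_3)=u(\bar{x},c+(x_3-c)/\lambda)$ with $\lambda>1$. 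Then $\Dd u^\lambda=(\Dd u)(\cdot)\,\mathrm{diag}(1,1,1/\lambda)$ at the dilated point, which still holds since $\d_3=0$ is invariant under vertical dilation. It converges to $\Dd u$ in $L^p$ as $\lambda\to1$ by continuity of dilations in $L^p$. After mollifying, $u_\tau$ has $\partial_3 u_\tau\in L^p(\Omega_\d)$, so case b)\,$ii)$ of Lemma~\ref{lem:product} becomes available. A diagonal argument reduces the problem to $u$ with this extra regularity.

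\textbf{Step 2 (localization and translation).} Cover $\overline{\omega}$ by finitely many open sets $U_j$ on which $\partial\omega$ is a Lipschitz graph with an inward direction $\bar{\nu}_j$, plus one interior set. Take $\bar{\chi}_j\in C_c^\infty(\R^2)$, a partition of unity, and $\chi_j(x)=\bar{\chi}_j(\bar{x})$. Lemma~\ref{lem:product} (case a), b)\,$ii)$, or c)\,$i)$ since $\partial_3\chi_j=0$) gives $\chi_j u\in\Hd{\Omega'}$ for suitable $\Omega'$, with $\Dd(\chi_ju)=\chi_j\Dd u+K^{\chi_j}_\d(u)$. Next translate: $(\chi_j u)(\cdot - t\nu_j)$ with $\nu_j=(\bar{\nu}_j,0)$. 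Its nonlocal gradient on $\Omega$ is the translate of $\Dd(\chi_j u)$, which is defined on the translated domain. By the Lipschitz cone condition that domain contains $(\Omega\cap \supp\chi_j)$ plus a neighbourhood of $\partial\omega$-type boundary points, so the interior part of $\Omega$ becomes strictly interior. Continuity of translations in $L^p$ gives convergence as $t\to0$; the $K^{\chi_j}$ term converges because the estimates \eqref{eq:leibnizest1}--\eqref{eq:leibnizest2} are linear in $u$ and translation-continuous.

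\textbf{Step 3 (mollification and cutoff).} Mollify $u_{j,t}:=(\chi_ju)(\cdot-t\nu_j)$ in all variables with radius $\ll t$. On the relevant region, $\Dd$ commutes with mollification because $\Dd=\Qcald\circ\nabla$ is a convolution operator applied on a set where the weak identity holds. This gives smooth approximants converging in $L^p(\Omega_\d)$ together with their nonlocal gradients on $\Omega$. Multiplying by a cutoff equal to $1$ on a neighbourhood of $\overline{\Omega_\d}$ then yields functions in $C_c^\infty(\R^3;\R^3)$ without changing anything on $\Omega_\d$. Summing over $j$ finishes the proof.

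\textbf{Main obstacle.} The delicate point is Step 2 in the partially local case. There the Leibniz remainder requires $\partial_3u$ (or $\bar\nabla u$), and we must check that the Step 1 regularization together with the vertical translation and dilation near the top and bottom of $I$ still gives $L^p$-convergence of $\Dd$ uniformly. The ends of $I$ are exactly where the ellipsoid degenerates and no nonlocal collar exists in $x_3$. We would handle them by the dilation trick rather than by translation, verifying the commutation $\Dd(u^\lambda)$ directly from Definition~\ref{def:weak_limit_gradients}.
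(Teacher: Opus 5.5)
Your Step~1 for $\d_3=0$ is a valid alternative to the paper's argument. Since $\Qcald$ does not act in $x_3$ when $\d_3=0$, one has $\Qcald(u\circ S_\lambda)=(\Qcald u)\circ S_\lambda$ for $S_\lambda(x)=(\bar{x},c+(x_3-c)/\lambda)$. So the dilated function has its nonlocal gradient on a taller cylinder and can be mollified in $x_3$. The paper instead splits $u=\chi_1(x_3)u+\chi_2(x_3)u$, places the pieces in $\Hd{\omega\times(0,\infty)}$ and $\Hd{\omega\times(-\infty,1)}$ by Lemma~\ref{lem:product}\,b)\,$i)$, and shifts them vertically outward.

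The gap is that your argument does not go beyond $\d_3=0$. If $\d_3>0$, the dilation is unavailable, because $\Qcald(u\circ S_\lambda)=(\Qcal_{\rho,(\bar{\d},\d_3/\lambda)}u)\circ S_\lambda$ changes the out-of-plane horizon. As written, your Step~2 uses only cutoffs $\bar{\chi}_j(\bar{x})$ and in-plane shifts. Hence no localized piece has its nonlocal gradient on a neighbourhood of the faces $\omega\times\partial I$. The identity between $\Dd$ of the fully mollified function and the mollified $\Dd$ in Step~3 therefore cannot be justified near these faces, which leaves cases a) and c) open. Likewise, $\bar{\d}=0$ is not symmetric to $\d_3=0$. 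The in-plane analogue of your dilation requires $\bar{c}+(\overline{\omega}-\bar{c})/\lambda\subset\omega$, i.e.\ star-shapedness of $\omega$, which a general bounded Lipschitz domain does not have.

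The missing idea is the paper's vertical device: cut off in $x_3$ alone and translate along $\pm e_3$. No collar in $x_3$ is needed for this, since Lemma~\ref{lem:product} admits such cutoffs:
via a) if $\d\in(0,1]^2$;
via b)\,$i)$ if $\d_3=0$;
via c)\,$ii)$ if $\bar{\d}=0$, once $\bar{\nabla}u\in L^p$ is known.
In the last case that in-plane regularity must come from cutoffs $\bar{\chi}(\bar{x})$ (admissible by c)\,$i)$), shifts along local inward directions of $\omega$, and in-plane mollification, not from a dilation.

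Also check the sign in Step~2. For inward $\bar{\nu}_j$ you need $(\chi_ju)(\cdot+t\nu_j)$, whose nonlocal gradient is available on $\Omega'-t\nu_j$; this set reaches past the lateral boundary $\partial\omega\times I$ near $\supp\chi_j$. Your $(\chi_ju)(\cdot-t\nu_j)$ has it only on $\Omega'+t\nu_j$, which near the boundary patch is pushed into $\Omega$.
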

\begin{proof}
    Without loss of generality, we assume that $I=(0,1)$.
    
    The case $\d \in (0,1]^2$ can be argued as in \cite[Theorem~1]{CKS23} using the Leibniz rule in Lemma~\ref{lem:leibniz} and \ref{lem:product} a) and does not even require the Cartesian product structure of $\Omega$. 
    The case $\d=(0,0)$ is the classical density result for Sobolev spaces. 
    It remains to prove the case $\bar{\d}>0$ and $\d_3=0$ and the case $\bar{\d}=0$ and $\d_3 >0$. 
    Since the proof strategy is analogous for both cases, we only detail the former case and split the proof into two steps.\smallskip

    \textit{Step 1: Approximation by functions with higher regularity in the third variable.} In this step, we prove that there is a sequence $(u_k)_k \subset L^p(\R^3;\R^{3\times 3})$ such that $u_k \to u$ in $L^p(\R^3;\R^3)$ and $\Dd u_k \to \Dd u$ in $L^p(\Omega;\R^3)$ as $k \to \infty$ and the weak derivatives $\partial_3^{n} u_k$ exist for all $n,k \in \N$ and lie in $L^p(\R^3;\R^3)$. 
    The strategy is to use a standard mollification and partition of unity approach in the $x_3$-direction.
    
    Consider two cut-off functions $\chi_1,\chi_2 \in C_c^{\infty}(\R;[0,1])$ such that $\chi_1+\chi_2=1$ on $(0,1)$ and $\supp(\chi_1) \subset (-\infty,3/4)$ and $\supp(\chi_2) \subset (1/4,\infty)$. 
    We then define $u_1(x)=\chi_1(x_3) u(x)$ and $u_2(x):=\chi_2(x_3) u(x)$ for a.e.~$x\in \R^3$ so that $u_1 + u_2 = u$ on $\Omega_\d = \omega_{\bar\d} \times (0,1)$ (note that $\d_3=0$) and $u_1=u_2=0$ on $(\Omega_\d)^c$.
    By Lemma~\ref{lem:product} b) $i)$, we find that $u_1 \in H^{\rho,p}_\d(\omega \times (0,\infty);\R^3)$ and $u_2 \in H^{\rho,p}_\d(\omega\times (-\infty,1);\R^3)$ since $\chi_1,\chi_2$ only depend on the third variable. If we denote translation by a vector $\zeta \in \R^3$ by $\tau_{\zeta}(v):=v(\cdot - \zeta)$, then we can define the sequence
    \[
    \tilde{u}_k: = \tau_{-e_3/k}(u_1)+\tau_{e_3/k}(u_2) \in \Hd{\omega\times (-\tfrac{1}{k},1+\tfrac{1}{k})}
    \]
    which satisfies $\tilde{u}_k \to u$ in $L^p(\R^3;\R^3)$ and by translation invariance of the nonlocal gradient
    \[
    \Dd \tilde{u}_k = \tau_{-e_3/k}(\Dd u_1)+\tau_{e_3/k}(\Dd u_2) \to \Dd u_1 + \Dd u_2 = \Dd (u_1+u_2)=\Dd u \ \text{in $L^p(\Omega;\R^{3\times3})$}
    \]
    as $k \to \infty$.
    Therefore, if $\epsilon < \frac1k$ and $\eta \in C_c^{\infty}(\R;[0,1])$ with $\supp \eta \subset B_1(0)$ and $\int_\R \eta\dd z_3 =1$, we can define 
    \[
    u_{k,\epsilon}(x) := \int_{\R} \eta(z_3)\tau_{\epsilon z_3e_3}(\tilde{u}_k)(x)\,dz_3 \quad \text{for a.e.~$x \in \R^3$},
    \]
    which is the function $\tilde{u}_k$ mollified only in the $x_3$-direction. In particular, it satisfies $u_{k,\e}\to \tilde{u}_k$ in $L^p(
    \R^3;\R^3)$ as $\e\to 0$ as well as the regularity requirement in the third variable due to Fubini, changes of variables and integration by parts. Additionally, on $\Omega$ the nonlocal gradient of $u_{k,\epsilon}$ is also given by mollification as
    \[
    \Dd u_{k,\epsilon}(x) = \int_{\R} \eta(z_3)\tau_{\epsilon z_3e_3}(\Dd \tilde{u}_k)(x)\,dz_3 \quad \text{for a.e.~$x \in \Omega$}.
    \]
    This shows that $\Dd u_{k,\epsilon} \to \Dd \tilde{u}_k$ in $L^p(\Omega;\R^{3\times 3})$ as $\epsilon \to 0$. Hence, we find that a suitable diagonal sequence $u_k:=u_{k,\epsilon_k} \in \Hd{\Omega}$ satisfies all the desired properties.  \smallskip

    \textit{Step 2: Smoothing the remaining variables.} 
    If $\chi \in C_c^{\infty}(\R^2)$, then we can use Lemma~\ref{lem:product} b) $ii)$ to conclude that $\chi u_k\colon x\mapsto \chi(\bar{x}) u_k(x)$ satisfies $\chi u_k \in \Hd{\omega'\times (0,1)}$ as long as $(\omega'\setminus \omega) \cap \supp(\chi) = \emptyset$; here, we crucially used that the third partial derivative of $u_k$ exists in the weak sense.
    Hence, we can completely follow the proof of \cite[Theorem~1 and Lemma~13]{CKS23} using partition of unity, translation and mollification -- albeit only in the first two variables -- in order to be able to approximate each $u_k$ with a sequence of  functions $(u_{k,j})_j$ whose weak in-plane gradients exist of all orders and are in $L^p$. 
    Combining this with the fact that the weak derivative $\partial_3^n u_k$ exists for all $n\in\N$ yields that $u_{k,j}\in W^{n,p}(\R^3;\R^3)$ for all $n\in\N$ and all $u_{k,j}$ vanish outside of a compact set (uniformly in $k,j$).
    Standard Sobolev embedding results yield that all of these functions are actually smooth. A diagonalization argument then finishes the proof.
\end{proof}

Density on the full space $\R^3$ is even easier to prove and simply follows from mollification and a cut-off argument. 

\begin{proposition}[Density on the full space]\label{prop:density_full_space}
    Let $\d\in[0,1]^2$ be given. For each $u \in \Hd{\R^3}$, there exists a sequence $(\phi_k)_k \subset C_c^{\infty}(\R^3;\R^3)$ such that $\phi_k \to u$ in $\Hd{\R^3}$.
\end{proposition}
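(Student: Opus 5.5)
The plan is to mollify first and then cut off, using the convolution structure of $\Dd$ on the whole space. For $u\in\Hd{\R^3}$ the condition $u=0$ on $((\R^3)_\d)^c$ is empty, so $u\in L^p(\R^3;\R^3)$ and $\Dd u\in L^p(\R^3;\R^{3\times3})$. Let $\eta_\e$ be a standard mollifier on $\R^3$ and set $u_\e:=\eta_\e*u\in C^\infty(\R^3;\R^3)\cap W^{1,p}$.

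First I show that $\Dd$ commutes with convolution in the weak sense of Definition~\ref{def:weak_limit_gradients}, namely $\Dd(\eta_\e*u)=\eta_\e*\Dd u$. Take $\psi\in C_c^\infty(\R^3;\R^3)$. By Fubini and the fact that $\Qcald$ commutes with translations (it is an average of translates $\phi(\cdot-T_\d z)$), we have $\divd(\eta_\e^-*\psi)=\eta_\e^-*\divd\psi$ with $\eta_\e^-(x)=\eta_\e(-x)$. Moving the mollifier onto the test function then gives
\[
\int u_\e\divd\psi=\int u\,\divd(\eta^-_\e*\psi)=-\int\Dd u\cdot(\eta_\e^-*\psi).
\]
Here $\eta_\e^-*\psi\in C_c^\infty$, so the definition applies directly. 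It follows that $\Dd u_\e=\eta_\e*\Dd u\to\Dd u$ in $L^p$, while $u_\e\to u$ in $L^p$. Since $u_\e$ is smooth with $\nabla u_\e\in L^p$, one also has $\Dd u_\e=\Qcald\nabla u_\e$ pointwise.

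Second, I cut off. Let $\chi_R(x)=\chi(x/R)$ with $\chi\in C_c^\infty(B_2(0);[0,1])$ and $\chi=1$ on $B_1(0)$. Then $\chi_R u_\e\in C_c^\infty$, and Lemma~\ref{lem:leibniz} gives
\[
\Dd(\chi_Ru_\e)=\chi_R\Dd u_\e+K^{\chi_R}_\d(u_\e).
\]
Lemma~\ref{lem:leibniz} is stated for $C_c^\infty$ functions. Either it extends to $u_\e\in C^\infty\cap W^{1,p}$, since the classical Leibniz rule and the definition of $K$ only need $u_\e$ smooth and integrable, or I apply it to $\chi_{2R}u_\e$ first. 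The term $\chi_R\Dd u_\e\to\Dd u_\e$ in $L^p$ by dominated convergence. To avoid the $1/\bar\d$ or $1/\d_3$ factors in \eqref{eq:leibnizest1}--\eqref{eq:leibnizest2}, I estimate $K^{\chi_R}_\d(u_\e)$ directly from \eqref{K_chi}. Using $|\chi_R(x-T_\d z)-\chi_R(x)|\le \|\nabla\chi\|_\infty|z|/R$ (as $|T_\d z|\le|z|$) and the integrability of $Q_\rho$ and $|\cdot|Q_\rho$, Minkowski's inequality yields
\[
\norm{K^{\chi_R}_\d(u_\e)}_{L^p}\le \frac{C}{R}\big(\norm{\nabla u_\e}_{L^p}+\norm{u_\e}_{L^p}\big)\to0 \quad\text{as } R\to\infty
\]
for fixed $\e$. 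Also $\chi_Ru_\e\to u_\e$ in $L^p$.

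A diagonal choice $R=R(\e)$ then gives $\phi_k:=\chi_{R_k}u_{\e_k}\to u$ in $\Hd{\R^3}$. The only real point to check is the commutation identity for mollification in the first step; it is routine, so I expect no serious obstacle. The main care needed is that the cutoff estimate uses the $\nabla u_\e$ form of $K$, so the bound, although uniform in $\d$, depends on $\e$; this is why the diagonal argument is needed.
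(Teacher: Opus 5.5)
Your proposal is correct and follows the paper's proof: mollify, use Fubini to get $\Dd(\eta_\e*u)=\eta_\e*\Dd u$, then cut off with $\chi(\cdot/R)$ and conclude by a diagonal argument. The only cosmetic difference is in the cut-off step. The paper writes $\Dd(\chi_R u_\e)-\Dd u_\e=\Qcald\big((\chi_R-1)\nabla u_\e+(\nabla\chi_R)u_\e\big)$ and uses $\|\Qcald\psi\|_{L^p}\le\|\psi\|_{L^p}$ instead of splitting via $K^{\chi_R}_\d$, which amounts to the same estimate.
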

\begin{proof}
    Let $(\eta_j)_j$ be a standard sequence of mollifiers and set $u_j:=u\ast\eta_j$ for $u\in \Hd{\R^3}.$ From Fubini's theorem, it is apparent that
    $$\Qcald u_j = \Qcald(u\ast\eta_j) = (\Qcald u)\ast\eta_j \qand \Dd u_j = (\Dd u)\ast \eta_j.$$
    As $j\to \infty$, this gives $u_j\to u$ in $\Hd{\R^3}$.

    Now, fix $j\in\N$ and choose some $\chi\in C_c^\infty(\R^3)$ with $\chi = 1$ on $B_1(0)$ and $\chi = 0$ on $\R^3\setminus B_2(0).$ We set $\chi_k=\chi(\frac{\cdot}{k})$ and define $u_{j,k}:=\chi_ku_j\in C_c^\infty(\R^3;\R^3).$
    Obviously, it holds that $u_{j,k} \to u_j$ in $L^p(\R^3;\R^3)$ as $k\to \infty$. We now deal with the convergence of the nonlocal gradient. To this end, we use the classical product rule and obtain
    $$\Dd u_{j,k} - \Dd u_j = \Qcald\big((\chi_k-1)\nabla u_j + (\nabla \chi_k)u_j\big).$$
    From Minkowski's integral inequality, $\norm{Q_\rho}_{L^1(\R^3)}=1$ (cf.~\eqref{eq:normalizedQ}) and a change of variables, we conclude that
    $\norm{\Qcald \psi}_{L^p(\R^3;\R^3)}\leq \norm{\psi}_{L^p(\R^3;\R^3)}$ for every $\psi\in C^\infty(\R^3;\R^3)\cap L^p(\R^3;\R^3)$. This furnishes the estimate
    \begin{align*}
        \norm{\Dd u_{j,k} - \Dd u_j}_{L^p(\R^3;\R^{3\times3})}&\leq \norm{(\chi_k-1)\nabla u_j}_{L^p(\R^3;\R^{3\times3})} + \norm{(\nabla\chi_k)u_j}_{L^p(\R^3;\R^{3\times3})}\\
        &\leq \norm{(\chi_k-1)\nabla u_j}_{L^p(\R^3;\R^{3\times3})} + \frac{C}{k}\norm{u_j}_{L^p(\R^3;\R^{3\times3})},
    \end{align*}
    for a constant $C>0$ independent of $j$ and $k$, which proves the convergence of the gradients as $k \to \infty$. A standard diagonal argument concludes the proof.
\end{proof}

One of the main aspects when dealing with nonlocal gradients of finite horizon is a translation mechanism, which allows one to connect the nonlocal theory with the classical one. While this relation works perfectly on the full space, when restricting to bounded domains it only works up to functions with vanishing nonlocal gradient. We prove that the translation mechanism, first introduced in \cite{CKS23}, works for all radii $\delta\in [0,1]^2$ with constants independent of the choice of $\delta$. In particular, the result holds for sequences $(\delta^\eps)_\eps$ with $\delta^\eps := (\bar{\delta},\frac{\delta_3}{\eps}) \to \delta^0\in [0,1]^2$ where $\delta = \delta(\eps)\in [0,1]^2$ and $\eps\to 0$, as is its main application in the proof of Theorem \ref{th:state} below.

\begin{proposition}[Translation mechanism]\label{prop:translation}
    Let $\d \in [0,1]^2$ and $O\subset\R^3$ be open, then the following statements hold: 
    \begin{itemize}
    \item[(i)] The linear operator $\Qcald\colon \Hd{O} \to W^{1,p}(O;\R^3),\ u\mapsto \int_{\R^3}Q_\rho(z)u(\cdot -T_\d z)\dd z$
    satisfies 
    $$\nabla(\Qcald u) = \Dd u\qand\norm{\Qcald u}_{W^{1,p}(O;\R^3)} \leq \norm{u}_{\Hd{O}}$$
    for all $u\in \Hd{O}$.
    \smallskip

    \item[(ii)] There is a linear operator $\Pcald\colon W^{1,p}(\R^3;\R^3)\to\Hd{\R^3}$ such that 
    $$\Pcald=\Qcald^{-1}\qand \norm{\Pcald u}_{\Hd{\R^3}} \leq C \norm{u}_{W^{1,p}(\R^3;\R^3)}$$
    for every $u\in W^{1,p}(\R^3;\R^3)$ and a constant $C>0$ that is independent of $\d$. Moreover, for every $\lambda \in (0,\sigma)$ with $\sigma$ from (H3), the map $\Pcald\colon W^{1,p}(\R^3;\R^3) \to H^{\lambda,p}(\R^3;\R^3)$ is bounded with an operator norm independent of $\d$. 
    \smallskip
    
    \item[(iii)] If $O$ has Lipschitz boundary, then there is an isomorphism
    $$\Hd{O} / \Nd{O} \cong W^{1,p}(O;\R^3)/{\rm Const}(O;\R^3)$$
    with operator norms uniformly bounded in $\d$, where ${\rm Const}(O;\R^3)$ is the space of constant functions on $O$ with values in $\R^3$ and $\Nd{O}$ as in \eqref{cursed_functions}.
    \end{itemize}
\end{proposition}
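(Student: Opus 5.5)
For (i), I would first take $u \in C_c^\infty(\R^3;\R^3)$. Then $\nabla(\Qcald u)=\Qcald\nabla u=\Dd u$ holds by Lemma~\ref{lem:anisotropic_nonlocal_gradients}. Minkowski's integral inequality together with $\norm{Q_\rho}_{L^1}=1$ from \eqref{eq:normalizedQ} gives $\norm{\Qcald u}_{L^p(O)}\le \norm{u}_{L^p(\R^3)}$. For general $u\in\Hd{O}$, I would argue by duality instead of density, since $O$ is arbitrary open. For $\psi\in C_c^\infty(O;\R^3)$, Fubini and the radial symmetry of $Q_\rho$ give $\int_O \Qcald u\,\Div\psi\,dx=\int_{O_\d} u\,\divd\psi\,dx=-\int_O \Dd u\cdot\psi\,dx$. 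Hence $\nabla\Qcald u=\Dd u$ weakly on $O$, and the norm bound follows directly.

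For (ii), I would separate the case $\d\in(0,1]^2$ from the degenerate cases. When $\d\in(0,1]^2$, Remark~\ref{rem:positive_horizon} gives $\Qcald v=(\Qcal_{\rho}(v\circ T_\d))\circ T_\d^{-1}$. I would then define $\Pcald v:=(\Pcal_\rho(v\circ T_\d))\circ T_\d^{-1}$, with $\Pcal_\rho$ the isotropic inverse from \cite{CKS23,BMS24}. Since $\Pcal_\rho$ is a Fourier multiplier $\widehat{Q_\rho}^{-1}$, this becomes the multiplier $m_\d(\xi):=1/\widehat{Q_\rho}(T_\d\xi)$. The key bounds are $\norm{\Pcald v}_{L^p}\le C\norm{v}_{W^{1,p}}$ and $\norm{\Dd\Pcald v}_{L^p}=\norm{\nabla v}_{L^p}$; the second is free because $\Dd\Pcald v=\nabla v$.

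The first bound is the main obstacle, because the naive scaling gives constants that blow up as $\bar\d$ or $\d_3\to0$. I would write $m_\d(\xi)=\frac{1}{\widehat{Q_\rho}(T_\d\xi)}$ and split $\xi$ into in-plane and out-of-plane parts. The aim is to show that $\langle\xi\rangle^{-1}\,m_\d(\xi)$ and $\langle\xi\rangle^{\lambda}\langle\xi\rangle^{-1} m_\d(\xi)$ satisfy Mikhlin-type (Marcinkiewicz product-type) bounds uniformly in $\d$. This should use the known asymptotics $\widehat{Q_\rho}(\zeta)\sim 1$ for small $|\zeta|$ and $\widehat{Q_\rho}(\zeta)\gtrsim |\zeta|^{-(1-\sigma)}$ to $|\zeta|^{-(1-\gamma)}$ for large $|\zeta|$, both of which follow from (H1)--(H4) by \cite{BMS24}.

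The difficulty is that $T_\d\xi$ is anisotropic, so isotropic Mikhlin bounds fail uniformly. I would therefore use the Marcinkiewicz multiplier theorem with respect to the decomposition $\R^3=\R^2\times\R$. That requires controlling mixed derivatives $\xi^\alpha\partial^\alpha$ with $\alpha$ split between in-plane and out-of-plane variables. Such bounds are invariant under anisotropic dilations $\xi\mapsto T_\d\xi$, which yields $\d$-uniformity. If this fails, a fallback is to deduce $\d$-uniform bounds by interpolating between the two decoupled regimes.

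For the degenerate $\d$, I would define $\Pcald$ by the same multiplier; for instance, with $\d_3=0$ this is $1/\widehat{Q_\rho}(\bar\d\bar\xi,0)$. I would check $\Qcald\Pcald=\mathrm{id}$ on $C_c^\infty$ and extend using Proposition~\ref{prop:density_full_space}. Alternatively, I would approximate by $\d_k\in(0,1]^2$ and pass to the limit using the uniform bounds. The $H^{\lambda,p}$ bound comes from the same multiplier estimate with the extra factor $\langle\xi\rangle^\lambda$, using $\lambda<\sigma$.

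For (iii), I would take the map $[u]\mapsto[\Qcald u]$. It is bounded by (i), and it is injective because $\Qcald u$ constant forces $\Dd u=0$. For surjectivity, given $v\in W^{1,p}(O)$, I would extend it via a Lipschitz extension operator $E$ to $W^{1,p}(\R^3)$ and set $u:=\mathbbm 1_{O_\d}\Pcald Ev$. Since the values of $u$ outside $O_\d$ do not affect $\Qcald u$ on $O$, we get $\Qcald u=v$ on $O$. The uniform norms then come from (ii) and the Poincaré quotient norms.
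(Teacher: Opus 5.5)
For (i) and (iii) your arguments are essentially the paper's. Part (i) is duality with Fubini and the evenness of $Q_\rho$. Part (iii) uses the map $[u]\mapsto[\Qcald u]$ with inverse $[v]\mapsto[\mathbbm{1}_{O_\d}\Pcald Ev]$, as in \cite[Theorem~4.1]{KrS24}.

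\textbf{Gap in (ii).} Your Fourier/Marcinkiewicz route is genuinely different from the paper's and can be made to work, but its decisive estimate is only announced (``the aim is to show \dots''). For $\lambda=0$ and $\lambda\in(0,\sigma)$, Marcinkiewicz requires
\[
\sup_{\d\in[0,1]^2}\ \sup_{\xi}\ \bigl|\xi^\alpha\partial^\alpha M_\d(\xi)\bigr|<\infty \quad\text{for } \alpha\in\{0,1\}^3,\qquad M_\d(\xi):=\frac{\langle\xi\rangle^{\lambda-1}}{\widehat{Q_\rho}(T_\d\xi)}.
\]
Two ingredients are missing.
\begin{itemize}
\item The size asymptotics you quote for $\widehat{Q_\rho}$ say nothing about derivatives. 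You need symbol bounds $|\partial^\alpha\widehat{Q_\rho}(\zeta)|\le C|\zeta|^{-|\alpha|}\widehat{Q_\rho}(\zeta)$ for $|\zeta|\ge1$ and $|\alpha|\le 3$. This is where a condition like (H2) has to enter, and these bounds must be proved or cited.
\item $M_\d$ is not of the form $g(T_\d\xi)$, so dilation invariance of the Marcinkiewicz condition gives no uniformity in $\d$ by itself. What works is the factorization
\[
M_\d(\xi)=\bigl[\langle\xi\rangle^{\lambda-1}\langle T_\d\xi\rangle^{1-\sigma}\bigr]\, h(T_\d\xi),\qquad h(\zeta):=\frac{\langle\zeta\rangle^{\sigma-1}}{\widehat{Q_\rho}(\zeta)}.
\]
By the symbol bounds and $\widehat{Q_\rho}(\zeta)\gtrsim\langle\zeta\rangle^{\sigma-1}$, $h$ is a Mikhlin multiplier. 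Hence $h(T_\d\,\cdot)$ satisfies the Marcinkiewicz condition uniformly, degenerate $\d$ included. For the bracket, check $|\xi^\alpha\partial^\alpha\langle T_\d\xi\rangle^{b}|\le C_b\langle T_\d\xi\rangle^{b}$ uniformly in $\d$, then use $\langle T_\d\xi\rangle\le\langle\xi\rangle$ and $\lambda\le\sigma$. The Leibniz rule for $\xi^\alpha\partial^\alpha$ then finishes.
\end{itemize}
Your fallback of ``interpolating between decoupled regimes'' is not an argument.

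\textbf{Where the difficulty actually lies.} Your diagnosis is off. For $\d\in(0,1]^2$ the $\Hd{\R^3}$-bound follows from naive rescaling. Set $w:=v\circ T_\d$. Since $T_\d$ has operator norm at most $1$, $\norm{\nabla w}_{L^p}\le\det(T_\d)^{-1/p}\norm{\nabla v}_{L^p}$. Hence $\norm{\Pcald v}_{L^p}=\det(T_\d)^{1/p}\norm{\Pcal_\rho w}_{L^p}\le C\norm{v}_{W^{1,p}}$. Rescaling breaks down only for the $H^{\lambda,p}$-bound, where it costs a factor of order $\min\{\bar\d,\d_3\}^{-\lambda}$. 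That is where your multiplier argument must do real work, and you dispatch it in one sentence.

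\textbf{Comparison with the paper.} The paper avoids Fourier analysis altogether. It writes $\Pcald\phi$ as an integral of differences $\phi(\cdot+T_\d h)-\phi$ against the radial kernel $\Div V_\rho$. Uniformity then reduces to $|T_\d h|\le|h|$ and $\int|h||\Div V_\rho(h)|\,dh<\infty$, which follow from the known pointwise bounds on $V_\rho$. The $H^{\lambda,p}$-bound comes from a second-difference and interpolation estimate in Gagliardo spaces. Degenerate $\d$ are reached by approximating with $\d_k\in(0,1]^2$ and using density. Your route costs symbol estimates and the Marcinkiewicz theorem. In return it gives a single formula valid for all $\d\in[0,1]^2$, which reduces to the identity at $\d=(0,0)$ without any approximation. (As printed, the paper's formula for $\Pcald$ must be supplemented by a term $+\phi(x)$, coming from the flux of $V_\rho\sim z/(4\pi|z|^3)$ through large spheres, to be the inverse of $\Qcald$; its estimates are unaffected.)
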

\begin{proof}
    \textit{(i)} Let $u\in \Hd{O}$, then $v\coloneqq \Qcald u \in L^{p}(O;\R^3)$ with $\norm{v}_{L^p(O;\R^3)} \leq \norm{u}_{L^p(\R^3;\R^3)}$, which can be established exactly as in the proof of Proposition \ref{prop:density_full_space}.
    Moreover, if $\psi \in C_c^\infty(O;\R^3)$, then it holds that
    \begin{align*}
        \int_O \Dd u\cdot \psi \dd x = - \int_{O_{\d}}u\cdot\divd \psi \dd x = - \int_{O_{\d}} u \cdot \Qcald (\Div \psi) \dd x = -\int_O v \cdot \Div \psi \dd x,
    \end{align*}
    where we have used Lemma \ref{lem:anisotropic_nonlocal_gradients}, Fubini's theorem, and a suitable change of variables. 
    This furnishes $\Dd u = \nabla v = \nabla (\Qcald u)$ on $O$, which finishes the proof of ($i$).
    \medskip
    
    \textit{(ii)} \textit{Step 1: Construction of $\Pcald$.}  
    First, we recall that when $\d=(1,1)$, there exists a bounded linear operator
    \[
    \Pcal_\rho\colon W^{1,p}(\R^3;\R^3) \to H^{\rho,p}(\R^3;\R^3),
    \]
    which is the inverse of $\Qcal_\rho:=\Qcal_{\rho,(1,1)}$, see~\cite[Lemma~2.12]{CKS25}. Moreover, for $\phi \in C_c^{\infty}(\R^3;\R^3)$, it holds by \cite[Equation~(31)]{CDI25} that
    \begin{align}\label{kernel_fundamental_theorem}
    \Pcal_\rho \phi (x) = \int_{\R^3} V_\rho(x-y) \cdot \nabla \phi(y)\,dy \quad \text{for all $x \in \R^3$,}
    \end{align}
    where $V_\rho \in C^{\infty}(\R^3\setminus\{0\};\R^3)$ is the radially symmetric kernel from the nonlocal fundamental theorem of calculus (\cite[Theorem~5.2]{BMS24}), which satisfies
    \begin{equation}\label{eq:Vrhobounds}
        |V_\rho(z)|+|z||\nabla V_\rho(z)| \leqslant C \max\left\{\frac{1}{|z|^{3-\sigma}},\frac{1}{|z|^{2}}\right\} \quad\text{for all $z \in \R^3\setminus\{0\}$},
    \end{equation}
    and $\Div V_\rho$ is a radial function which agrees with a Schwartz function on $B_1(0)^c$, see~\cite[Lemma~4.8]{CDI25}. We claim that
    \begin{equation}\label{eq:claim}
        \Pcal_\rho \phi (x) = \int_{\R^3}(\phi(x+h)-\phi(x))\Div V_\rho (h)\,dh\quad\text{for all $x\in \R^3$.}
    \end{equation}
    Indeed, we can compute that
    \begin{align*}
        \Pcal_\rho \phi (x) &= \lim_{r \downarrow 0}\int_{B_r(x)^c} V_\rho(x-y) \cdot \nabla_y (\phi(y)-\phi(x))\,dy \\
        &= \lim_{r \downarrow 0} -\int_{B_r(x)^c} \Div_y (V_\rho(x-y))   (\phi(y)-\phi(x))\,dy + \int_{\partial B_r(x)}V_\rho(x-y)(\phi(y)-\phi(x))\,d\Hcal^{2}(y)\\
        &=\int_{\R^3}(\phi(y)-\phi(x))\Div V_\rho (y-x)\,dy,
    \end{align*}
    where we have used integration by parts in the first equality, and \eqref{eq:Vrhobounds} together with the fact that $\phi$ is Lipschitz to find that the boundary integral vanishes in the limit $r \downarrow 0$.
    For $\d \in [0,1]^2$, we then define 
    \begin{align}\label{Pcal_3d}
        \Pcald \phi (x):=\int_{\R^3}(\phi(x+ T_{\d}h)-\phi(x))\Div V_\rho(h)\,dh\quad \quad \text{for $\phi \in C_c^{\infty}(\R^3;\R^3)$}
    \end{align}
    as the candidate for the inverse of $\Qcald$.\smallskip

    \textit{Step 2: Invertibility and estimates of $\Pcald$.}  
    In the following, we first discuss the case $\d\in(0,1]^2$, so that the inverse operator $T_\d^{-1}$ exists.
    Using this rescaling, it is straightforward to prove that $(\Qcald\circ\Pcald)\phi = \phi$ for every $\phi \in C_c^\infty(\R^3;\R^3)$ by using the equivalent formulations \eqref{kernel_fundamental_theorem} and \eqref{eq:claim} as well as the auxiliary function $\psi:=\phi\circ T_\d$. The reverse identity can be handled analogously after pointing out that $\Qcald\phi \in C_c^\infty(\R^3;\R^3)$ as well.
    
    We now prove uniform estimates of the $L^p$-norm of $\Pcald \phi$ and its nonlocal gradient.
    To this aim, we use Minkowski's integral inequality and $|T_\d h| \leq |h|$ for all $h\in \R^3$, to find for all $\phi \in C_c^{\infty}(\R^3;\R^3)$ that
    \begin{align}\label{Pcaldelta_L^p}
        \norm{\Pcald \phi}_{L^p(\R^3;\R^3)} &\leq \int_{\R^3}\norm{\phi(\cdot+T_{\d}h)-\phi(\cdot)}_{L^p(\R^3;\R^3)}|\Div V_\rho(h)|\,dh \nonumber\\
        &\leq C\norm{\phi}_{W^{1,p}(\R^3;\R^3)} \int_{\R^3}\abs{T_{\d}h}\abs{\Div V_\rho (h)}\,dh \leq C\norm{\phi}_{W^{1,p}(\R^3;\R^3)}\int_{\R^3}\abs{h}\abs{\Div V_\rho (h)}\,dh.
    \end{align}
    The final integral satisfies
    \begin{align*}
       \int_{\R^3}\abs{h}\abs{\Div V_\rho (h)}\,dh &= \int_{B_1(0)}\abs{h}\abs{\Div V_\rho (h)}\,dh + \int_{B_1(0)^c}\abs{h}\abs{\Div V_\rho (h)}\,dh \\
       &\leq C\Big(\int_{B_1(0)}\frac{1}{|h|^{3-\sigma}}\dd h +\int_{B_1(0)^c}\frac{1}{\abs{h}^{4}}\,dh\Big) < \infty,
    \end{align*}
    where we have used \eqref{eq:Vrhobounds} and that $\Div V_\rho$ is a radial Schwartz function on $B_1(0)^c$.
    Moreover, it holds that $(\Qcald\circ\Pcald)\phi=\phi$ by design, so that
    $(\Dd \circ \Pcald) \phi = (\nabla \circ \Qcald\circ \Pcald)\phi =  \nabla\phi$, which allows us to immediately deduce
    \begin{align}\label{Pcaldelta_full_estimate}
        \norm{\Pcald \ffi}_{\Hd{\R^3}} \leq C \norm{\ffi}_{W^{1,p}(\R^3;\R^3)}
    \end{align}
    for all $\phi\in C_c^\infty(\R^3;\R^3).$
    Hence, $\Pcald$ extends to a linear and bounded operator from $W^{1,p}(\R^3;\R^3)$ to $\Hd{\R^3}$ with operator norm bounded uniformly in $\d$ and $\Pcald = \Qcald^{-1}$.
    Note that we have exploited here both the density of $C_c^\infty(\R^3;\R^3)$ in $W^{1,p}(\R^3;\R^3)$ and in $\Hd{\R^3}$, see Proposition \ref{prop:density_full_space}.
    
    It remains to prove the same properties for $\d\in [0,1]^2$.
    To this end, we first select a sequence $(\d_k)_k\in (0,1]^2$ with $\d_k\to \d$ as $k\to \infty.$
    First consider $\phi \in C_c^{\infty}(\R^3;\R^3)$ again, and repeat similar calculations as in \eqref{Pcaldelta_L^p} to obtain
    \begin{align}\label{Pcald_difference}
        \norm{\Pcald \ffi - \Pcaldk\ffi}_{L^p(\R^3;\R^3)} \leq  C|\d-\d_k|\norm{\phi}_{W^{1,p}(\R^3;\R^3)} \to 0;
    \end{align}
    in particular, $\Pcald$ extends to a continuous map from $W^{1,p}(\R^3;\R^3)\to L^p(\R^3;\R^3).$
    Exploiting the fact that $\Pcaldk = \Qcaldk^{-1}$ for every $k\in\N$, we now prove that $\Qcald\Pcald\ffi = \ffi$ where we understand $\Qcald$ as a map from $L^p$ to $L^p$. 
    Indeed, it holds that
    $$\Qcald\Pcald\ffi - \ffi = \Qcald\Pcald\ffi - \Qcaldk \Pcaldk\ffi = (\Qcald-\Qcaldk)\Pcald\ffi + \Qcaldk(\Pcald\ffi-\Pcaldk \ffi)$$
    where the right-hand side vanishes as $k\to \infty$. 
    We treat the first term via Minkowski's integral inequality, and dominated convergence to show that
    \begin{align}
        \norm{(\Qcald-\Qcaldk)\Pcald\ffi}_{L^p(\R^3;\R^3)}\leq \int_{\R^3}Q_\rho(z)\norm{\Pcald\ffi - \Pcald\ffi(\cdot - (T_\d - T_{\d_k})z)}_{L^p(\R^3;\R^3)}\dd z \to 0,
    \end{align}
    while the second term can be handled via
    $$\norm{\Qcaldk(\Pcald\ffi-\Pcaldk \ffi)}_{L^p(\R^3;\R^3)} \leq \norm{\Pcald\ffi-\Pcaldk \ffi}_{L^p(\R^3;\R^3)} \to 0$$
    due to ($i$) and \eqref{Pcald_difference}.
    Hence, we obtain $\Dd \Pcald \ffi  = \nabla (\Qcald\Pcald \ffi) = \nabla \ffi$ so that \eqref{Pcaldelta_full_estimate} also holds for $\delta \in [0,1]^2$. 
    The map $\Pcald$ then extends to a bounded operator from $W^{1,p}(\R^3;\R^3)$ to $\Hd{\R^3}$ with operator norm bounded uniformly in $\d$ and $\Pcald$ is the right-inverse of $\Qcald$.
    That $\Pcald$ is also the left-inverse of $\Qcald$ can be similarly established via
    $$\Pcald \Qcald\ffi - \ffi = (\Pcald-\Pcaldk)\Qcald\ffi + \Pcaldk(\Qcald\ffi-\Qcaldk \ffi)$$
    on $C_c^\infty(\R^3;\R^3)$ and requires the density of $C_c^\infty(\R^3;\R^3)$ in $\Hd{\R^3}$ from Proposition \ref{prop:density_full_space}. \smallskip

    \textit{Step 3: Compact embedding.}  
    Finally, we prove the boundedness into $H^{\lambda,p}(\R^3;\R^3)$.
    For technical reasons, we will work with both Bessel and Gagliardo spaces as well as their respective norms and exploit that $W^{t,p}$ is continuously embedded in $H^{\lambda,p}$ for some suitable $t$.
    Given $\lambda_1 \in (\lambda,\sigma)$ and $\lambda_2 \in (1-\sigma,1)$ with $\lambda_1+\lambda_2=1$ we compute for any $h,h'\in \R^3$ that
    \begin{align*}
        \norm{\phi(\cdot+T_{\d}h+h')-\phi(\cdot+h')-&\phi(\cdot + T_{\d}h)+\phi}_{L^p(\R^3;\R^3)} \\
        &\leq C\abs{T_{\d}h}^{\lambda_2}\norm{\phi(\cdot+h')-\phi}_{H^{\lambda_2,p}(\R^3;\R^3)}\\
        &\leq C\abs{h}^{\lambda_2}\norm{\phi(\cdot+h')-\phi}_{L^p(\R^3;\R^3)}^{\lambda_1}\norm{\phi(\cdot+h')-\phi}^{\lambda_2}_{W^{1,p}(\R^3;\R^3)}\\
        &\leq C\abs{h}^{\lambda_2}\abs{h'}^{\lambda_1}\norm{\phi}_{W^{1,p}(\R^3;\R^3)}^{\lambda_1}\norm{\phi}_{W^{1,p}(\R^3;\R^3)}^{\lambda_2}\\
        &\leq C\abs{h}^{\lambda_2}\abs{h'}^{\lambda_1}\norm{\phi}_{W^{1,p}(\R^3;\R^3)},
    \end{align*}
    where the first and third inequality use a bound on translations in (fractional) Sobolev spaces, see~e.g.~\cite[Proposition~B.2]{BCC22}, and the second inequality is the interpolation inequality \eqref{eq:interpolation} for $s=\lambda_2$, cf.~\cite[Corollary~3.19]{BGS25}. We now obtain as before with Minkowski's integral inequality that
    \begin{align*}
        \norm{\Pcald \phi(\cdot+h')-\Pcald \phi}_{L^p(\R^3;\R^3)}\leq C\abs{h'}^{\lambda_1}\norm{\phi}_{W^{1,p}(\R^3;\R^3)}\int_{\R^3}\abs{h}^{1-\lambda_1}|\Div V_\rho(h)|\,dh,
    \end{align*}
    and the final integral is uniformly bounded in $\d$ just like before given that $\lambda_1 \in (\lambda,\sigma)$. 
    Hence, for any $t \in (\lambda, \lambda_1)$, we can estimate the Gagliardo semi-norm as follows
    \begin{align*}
        [\Pcald \phi]^p_{W^{t,p}(\R^3;\R^3)} &= \int_{B_1(0)} \frac{\norm{\Pcald \phi(\cdot + h') - \Pcald\phi}_{L^p(\R^3;\R^3)}^p}{\abs{h'}^{3+tp}} \dd h' + \int_{B_1(0)^c} \frac{C\norm{\Pcald\phi}^p_{L^p(\R^3;\R^3)}}{\abs{h'}^{3+tp}} \dd h'\\
        &\leq C\norm{\phi}^p_{W^{1,p}(\R^3;\R^3)} \Big(\int_{B_1(0)}\frac{1}{\abs{h'}^{3-(\lambda_1-t)p}}\dd h' + \int_{B_1(0)^c}\frac{1}{\abs{h'}^{3+tp}}\dd h'\Big) \leq C\norm{\phi}^p_{W^{1,p}(\R^3;\R^3)}.
    \end{align*}
    Therefore, we deduce that $\Pcald\colon W^{1,p}(\R^3;\R^3) \to W^{t,p}(\R^3;\R^3)$ is bounded with a uniform bound in $\d$, and together with the embedding $W^{t,p}(\R^3;\R^3) \hookrightarrow H^{\lambda,p}(\R^3;\R^3)$ as in \cite[Theorem 3.24]{BGS25} this finishes the proof.
    \medskip

    \textit{(iii)} This can be argued as in \cite[Theorem~4.1]{KrS24} by using Part (i) and (ii).
\end{proof}

The main reasons for developing the translation mechanism in the previous proposition are the following Poincar\'{e} inequality and compactness result, which hold uniformly in $\d$.
\begin{corollary}[Poincar\'e--Wirtinger inequality]\label{cor:poincare}
Let $O\subset \R^3$ be a bounded Lipschitz domain. There exists a constant $C>0$ independent of $\d$ such that
\[
\min_{h \in \Nd{O}}\norm{u-h}_{L^p(\R^3;\R^3)} \leq C \norm{\Dd u}_{L^p(O;\R^{3\times 3})} \quad \text{for all $u \in \Hd{O}$.}
\]
\end{corollary}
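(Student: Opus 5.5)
Given $u\in\Hd{O}$, the plan is to use the translation mechanism from Proposition~\ref{prop:translation}: translate $u$ into a Sobolev function, apply the classical Poincar\'e--Wirtinger inequality there, and translate back. The point is to build an explicit competitor $h\in\Nd{O}$ with constants independent of $\d$.

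\emph{Step 1 (local side).} Set $v:=\Qcald u\in W^{1,p}(O;\R^3)$. By Proposition~\ref{prop:translation}\,(i), $\nabla v=\Dd u$ on $O$. Let $c:=\frac{1}{|O|}\int_O v\dd x$. The classical Poincar\'e--Wirtinger inequality on the bounded Lipschitz domain $O$, whose constant depends only on $O$ and $p$ and not on $\d$, gives
\[
\norm{v-c}_{W^{1,p}(O;\R^3)}\le C\norm{\Dd u}_{L^p(O;\R^{3\times3})}.
\]

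\emph{Step 2 (extension and return).} Let $E$ be a Stein extension operator $W^{1,p}(O;\R^3)\to W^{1,p}(\R^3;\R^3)$, again independent of $\d$. Put $w:=\Pcald\big(E(v-c)\big)\in\Hd{\R^3}$. By Proposition~\ref{prop:translation}\,(ii),
\[
\norm{w}_{L^p(\R^3;\R^3)}\le C\norm{E(v-c)}_{W^{1,p}(\R^3;\R^3)}\le C\norm{\Dd u}_{L^p(O;\R^{3\times3})},
\]
with $C$ uniform in $\d$. Moreover $\Qcald w=E(v-c)=v-c$ on $O$. The function $w$ need not vanish outside $O_\d$, so multiply it by a cutoff: take $\chi\in C_c^\infty(\R^3)$ with $\chi=1$ on $O_{(1,1)}\supset O_\d$, and set $\tilde w:=\mathbbm{1}_{O_\d}w$. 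Since $\Qcald$ evaluated on $O$ only sees values in $O_\d$ (the kernel $Q_\rho(z)$ is supported in $|z|\le1$, so $x-T_\d z\in \overline{O_\d}$), we still have $\Qcald\tilde w=v-c$ on $O$. Also $\norm{\tilde w}_{L^p}\le\norm{w}_{L^p}$, and $\tilde w\in\Hd{O}$ with $\Dd\tilde w=\nabla(v-c)=\Dd u$ on $O$, by the duality in Definition~\ref{def:weak_limit_gradients} as in the proof of (i).

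\emph{Step 3 (competitor).} Define $h:=u-\tilde w$. This function vanishes outside $O_\d$, lies in $L^p$, and satisfies $\Dd h=\Dd u-\Dd\tilde w=0$ on $O$. Hence $h\in\Nd{O}$, and
\[
\norm{u-h}_{L^p}=\norm{\tilde w}_{L^p}\le C\norm{\Dd u}_{L^p(O)}.
\]
This gives the inequality with an infimum. The infimum is attained because $\Nd{O}$ is a closed subspace of $L^p$ (closedness follows from the weak definition of $\Dd$), and $L^p$ is reflexive for $1<p<\infty$, so the distance to a closed subspace is attained.

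\emph{Main obstacle.} The delicate point is Step 2: checking that truncating to $O_\d$ does not alter $\Qcald$ on $O$, including in the degenerate cases $\bar\d=0$ or $\d_3=0$, where $T_\d$ is singular and the relevant neighborhood is a flattened ellipsoid, and keeping all constants $\d$-independent. This relies on the uniform bound in Proposition~\ref{prop:translation}\,(ii), which has already been established.
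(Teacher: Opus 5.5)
Your argument is correct and is essentially the paper's proof. The paper deduces the inequality at once from the $\d$-uniform isomorphism in Proposition~\ref{prop:translation}\,(iii), following \cite[Lemma~4.7]{KrS24}. Your Steps 1--3 simply inline how that isomorphism is built from parts (i) and (ii), via $u\mapsto \mathbbm{1}_{O_\d}\Pcald E(\Qcald u-c)$, and combine it with the classical Poincar\'e--Wirtinger inequality. This is the same mechanism the paper uses in the proof of Corollary~\ref{cor:compactness}. The cutoff $\chi$ introduced in Step 2 is never used and can be dropped.
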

\begin{proof}
    This is an immediate consequence of Proposition~\ref{prop:translation}\,(iii), cf.~\cite[Lemma~4.7]{KrS24} for more details.
\end{proof}
\begin{corollary}[Uniform compactness]\label{cor:compactness}
    Let $O\subset \R^3$ be a bounded Lipschitz domain and $(\d_k)_k \subset[0,1]^2$ be a sequence of horizons with $\d_k\to \d\in[0,1]^2$ as $k\to \infty$. 
    If $u_k \in \Hdk{O}$ for $k\in\N$ satisfies
    \[
        \sup_k \norm{\Ddk u_k}_{L^p(O;\R^{3\times 3})}< \infty,
    \]
    then, up to a non-relabeled subsequence, there exist $h_k \in \Ndk{O}$ for $k\in\N$ such that $u_k-h_k \to u$ in $L^p(\R^3;\R^3)$ for some $u\in L^p(\R^3;\R^3)$.
\end{corollary}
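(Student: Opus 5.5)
The plan is to reduce the statement to the compact embedding of Proposition~\ref{prop:translation}\,(ii) by using the translation mechanism together with the Poincar\'e--Wirtinger inequality.

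\emph{Step 1: normalisation.} By Corollary~\ref{cor:poincare}, whose constant is independent of $\d$, I choose $\tilde h_k\in\Ndk{O}$ with
\[
\norm{u_k-\tilde h_k}_{L^p(\R^3;\R^3)}\le C\norm{\Ddk u_k}_{L^p(O;\R^{3\times3})}.
\]
Set $w_k:=u_k-\tilde h_k$. Then $\Ddk w_k=\Ddk u_k$, so $(w_k)_k$ is bounded in $\Hdk{O}$ uniformly in $k$.

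\emph{Step 2: translation to a common space.} By Proposition~\ref{prop:translation}\,(i), $v_k:=\Qcaldk w_k$ is bounded in $W^{1,p}(O;\R^3)$. Let $E$ be a fixed Sobolev extension operator for the Lipschitz domain $O$. Define
\[
z_k:=\Pcaldk(Ev_k)\in\Hdk{\R^3}.
\]
By Proposition~\ref{prop:translation}\,(ii), $(z_k)_k$ is bounded in $H^{\lambda,p}(\R^3;\R^3)$ for some fixed $\lambda\in(0,\sigma)$, uniformly in $k$. Hence, on a fixed ball $B\supset O_{(1,1)}$, Rellich's theorem for Bessel potential spaces gives a subsequence with $z_k\to z$ in $L^p(B;\R^3)$.

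\emph{Step 3: the correction lies in $\Ndk{O}$.} Put $\hat z_k:=\one_{O_{\d_k}}z_k$. For $x\in O$, the integral $\Qcaldk f(x)$ only samples $f$ on $x-T_{\d_k}B_1(0)\subset O_{\d_k}$. Therefore $\Qcaldk\hat z_k=\Qcaldk z_k=Ev_k=v_k$ on $O$. Define $h_k:=u_k-\hat z_k$. It vanishes outside $O_{\d_k}$ and satisfies
\[
\Qcaldk h_k=\Qcaldk\tilde h_k\quad\text{on } O,
\]
whose gradient vanishes on $O$ by Proposition~\ref{prop:translation}\,(i). Hence $h_k\in\Ndk{O}$ and $u_k-h_k=\one_{O_{\d_k}}z_k$.

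\emph{Step 4: passage to the limit.} I claim $\one_{O_{\d_k}}z_k\to\one_{O_\d}z=:u$ in $L^p(\R^3;\R^3)$. Write
\[
\one_{O_{\d_k}}z_k-\one_{O_\d}z=\one_{O_{\d_k}}(z_k-z)+(\one_{O_{\d_k}}-\one_{O_\d})z .
\]
The first term tends to zero by Step 2. For the second I use dominated convergence, which needs $\one_{O_{\d_k}}\to\one_{O_\d}$ a.e.

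\emph{Main obstacle: a.e. convergence of the indicators.} Pointwise convergence holds off $\partial O_\d$ because the ellipsoids $T_{\d_k}B_1(0)$ converge to $T_\d B_1(0)$ in the Hausdorff sense. Indeed, points of the open set $O_\d$ eventually lie in $O_{\d_k}$, and points outside $\overline{O_\d}$ eventually leave $O_{\d_k}$. So it remains to show $|\partial O_\d|=0$.
\begin{itemize}
\item For $\d\in(0,1]^2$, I would reduce to the isotropic case via $T_\d^{-1}$: then $T_\d^{-1}O_\d=T_\d^{-1}O+B_1(0)$ is a parallel set, whose boundary is Lebesgue-null.
\item For degenerate $\d$ the set $O_\d$ is a lower-dimensional parallel set, respectively $O$ itself, and the boundary is null via Fubini slicing and the Lipschitz regularity of $\partial O$.
\end{itemize}
If this turns out to be delicate, the fallback is to pick the subsequence so that $\one_{O_{\d_k}}\to g$ weakly-$*$ in $L^\infty$. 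I would then absorb the mismatch into $h_k$ by suitably modifying $z_k$ near $\partial O_{\d_k}$, since values there do not influence $\Qcaldk$ on interior compact subsets of $O$.
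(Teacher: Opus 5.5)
Your proposal is correct and is essentially the paper's proof: translate with $\Qcaldk$, extend, apply $\Pcaldk$ to get a uniform $H^{\lambda,p}$ bound, extract a convergent subsequence of the truncations $\one_{O_{\d_k}}\Pcaldk(Ev_k)$, and put the remainder into $h_k\in\Ndk{O}$; normalising via Corollary~\ref{cor:poincare} instead of subtracting mean values of $\Qcaldk u_k$ is immaterial, and your check of $\one_{O_{\d_k}}\to\one_{O_\d}$ a.e.\ through $|\partial O_\d|=0$ spells out a step the paper delegates to \cite[Theorem~1.1]{BCGS25}. For degenerate $\d$, that slicing step needs the true but non-automatic fact that almost every slice of $\overline{O}$ is the closure of the corresponding slice of $O$; you should also drop the fallback, because $\Qcaldk$ samples $z_k$ near $\partial O_{\d_k}$ at points of $O$ near $\partial O$, so modifying $z_k$ there would destroy $\Ddk h_k=0$ on $O$.
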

\begin{proof}
    Define $v_k :=\Qcaldk u_k$ for $k\in\N$, then we find by Proposition~\ref{prop:translation}\,(i) that $(v_k)_k \subset W^{1,p}(O;\R^3)$ with
    \[
        \sup_k \norm{\nabla v_k}_{L^p(O;\R^{3\times 3})}=\sup_k \norm{\Ddk u_k}_{L^p(O;\R^{3\times 3})}< \infty.
    \]
    Hence, by subtracting mean values and extension, we find a bounded sequence $(\tilde{v}_k)_k \subset W^{1,p}(\R^3;\R^3)$ such that $\nabla \tilde{v}_k = \Ddk u_k$ on $O$ for all $k\in\N$. Using Proposition~\ref{prop:translation}\,(ii), we deduce that $\tilde{u}_k:=\Pcaldk \tilde{v}_k$ is a bounded sequence in $H^{\lambda,p}(\R^3;\R^3)$. Hence, up to a non-relabeled subsequence, we deduce that $\mathbbm{1}_{O_{\d_k}}\tilde{u}_k \to u$ in $L^p(\R^3;\R^3)$ {cf.~\cite[Theorem 1.1]{BCGS25}}. Moreover, it holds that $\Ddk \tilde{u}_k = \nabla \tilde{v}_k = \nabla v_k = \Ddk u_k$ on $O$. Hence, setting $h_k:=u_k-\mathbbm{1}_{O_{\d_k}}\tilde{u}_k \in \Ndk{O}$ yields the desired result.
\end{proof}
We now turn to the convergence of the nonlocal gradients with varying interaction radii.

\begin{lemma}\label{le:uniformconvergence}
   Let $(\d_k)_k \subset[0,1]^2$ with $\d_k\to \d\in[0,1]^2$ as $k\to \infty$. For any $\phi \in C_c^{\infty}(\R^3)$, it holds that
    \[
    \Qcaldk\phi \to \Qcald \phi \qand \Ddk \phi \to \Dd \phi \quad \text{uniformly as $k \to \infty$.}
    \]
\end{lemma}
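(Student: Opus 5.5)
The plan is to argue directly from the representation of $\Qcald$ in Lemma~\ref{lem:anisotropic_nonlocal_gradients}, which is valid for every $\d\in[0,1]^2$ (including degenerate horizons), and to use that $\phi$ is Lipschitz with compact support. For every $x\in\R^3$ we write
\[
\Qcaldk\phi(x)-\Qcald\phi(x)=\int_{\R^3}Q_\rho(z)\big(\phi(x-T_{\d_k}z)-\phi(x-T_\d z)\big)\,dz .
\]
Since $|T_{\d_k}z-T_\d z| = |(T_{\d_k}-T_\d)z|\leq |\d_k-\d|\,|z|$, where we use that $T_{\d_k}-T_\d$ is diagonal with entries bounded by $|\d_k-\d|$, the mean value theorem gives
\[
\sup_{x\in\R^3}\abs{\Qcaldk\phi(x)-\Qcald\phi(x)}\leq \norm{\nabla\phi}_{L^\infty(\R^3)}\,|\d_k-\d|\int_{\R^3}Q_\rho(z)|z|\,dz .
\]
The last integral is finite because $Q_\rho\geq 0$ is integrable by \eqref{eq:normalizedQ} and supported in $\overline{B_1(0)}$ by (H0) and \eqref{Qrho}, so $\int Q_\rho|z|\,dz\leq 1$. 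This yields uniform convergence of $\Qcaldk\phi$ to $\Qcald\phi$, with rate $|\d_k-\d|$.

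For the gradients, I would invoke the second identity in \eqref{Dzero}, $\Dd\phi=\Qcald(\nabla\phi)$. This holds for every $\d\in[0,1]^2$, since differentiation can be passed under the integral. We then apply the previous estimate componentwise to $\partial_j\phi\in C_c^\infty(\R^3)$, which gives
\[
\sup_{x}\abs{\Ddk\phi(x)-\Dd\phi(x)}\leq \norm{\nabla^2\phi}_{L^\infty(\R^3)}\,|\d_k-\d| \to 0 .
\]

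There is no real obstacle here. The only point needing care is that the argument must not use the convolution form \eqref{eq:Qrhodelta} or $T_\d^{-1}$, since these fail when a component of $\d$ vanishes. Working with the change-of-variables form $\int Q_\rho(z)\phi(x-T_\d z)\,dz$ avoids this, and it treats positive and degenerate horizons uniformly.
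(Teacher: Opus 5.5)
Your proof is correct and is essentially the paper's argument. The paper likewise reduces to $\Qcald$ via $\Dd=\Qcald\circ\nabla$. It then bounds $\abs{\Qcal_{\rho,\d_k}\phi-\Qcald\phi}$ uniformly by the Lipschitz constant of $\phi$, times $\abs{(T_{\d_k}-T_\d)z}\le C\abs{\d_k-\d}\abs{z}$, times the finite integral $\int_{\R^3}Q_\rho(z)\abs{z}\,dz$, all in the form $\int_{\R^3} Q_\rho(z)\phi(x-T_\d z)\,dz$, which avoids $T_\d^{-1}$.
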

\begin{proof}
    It is sufficient to prove the first statement in light of \eqref{Dzero}. For $\d_k=(\bar{\d}_k,\delta_{k,3}), \d=(\bar{\d},\d_3)\in [0,1]^2$ and any $x\in\R^3$, it holds that
    \begin{align*}
    \abs{\Qcaldk \phi(x) - \Qcald \phi(x)}&\leq \int_{\R^3} Q_\rho(z)\abs{\phi(x-T_{\d_k}z)-\phi (x-T_{\d}z)}\,dz \\
    &\leq \mathrm{Lip}(\phi) \int_{\R^3} Q_\rho(z) \abs{((\bar{\d}_k-\bar{\d})\bar{z},(\d_{k,3}-\d_{3})z_3)}\,dz\\
    &\leq C\mathrm{Lip}(\phi)\abs{\d_k-\d} \int_{\R^3}Q_\rho(z)\abs{z}\,dz \to 0 \quad \text{as $k\to \infty$}
    \end{align*}
    where $\mathrm{Lip}(\phi)$ denotes the Lipschitz constant of $\phi$.
\end{proof}

\begin{lemma}\label{le:convergenceP}
   Let $O\subset \R^3$ be an open set, $(\d_k)_k \subset[0,1]^2$ with $\d_k\to \d\in[0,1]^2$ as $k\to \infty$, and let $(v_k)_k \subset W^{1,p}(\R^3;\R^3)$ be a sequence weakly converging to $v \in W^{1,p}(\R^3;\R^3)$ such that $\supp v_k \subset B_R(0)$ for some $R>0$. Then, it holds that
    \[
    \mathbbm{1}_{O_{\d_k}}\Pcaldk v_k \to \mathbbm{1}_{O_{\d}}\Pcald v \quad \text{in $L^p(\R^3;\R^3)$ as $k\to\infty$.}
    \]
\end{lemma}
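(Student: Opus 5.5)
We split the difference as
\[
\mathbbm{1}_{O_{\d_k}}\Pcaldk v_k - \mathbbm{1}_{O_{\d}}\Pcald v = \mathbbm{1}_{O_{\d_k}}\big(\Pcaldk v_k - \Pcald v\big) + \big(\mathbbm{1}_{O_{\d_k}}-\mathbbm{1}_{O_{\d}}\big)\Pcald v
\]
and further write $\Pcaldk v_k - \Pcald v = \Pcaldk(v_k-v) + (\Pcaldk-\Pcald)v$. Each piece is then handled separately.

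For the term $(\Pcaldk-\Pcald)v$, we use the estimate \eqref{Pcald_difference}. It was derived for $C_c^\infty$-functions, but its proof only uses the representation \eqref{Pcal_3d}, Minkowski's inequality, and $\int |h||\Div V_\rho(h)|\,dh<\infty$. By density and the continuity of $\Pcald,\Pcaldk$ from $W^{1,p}$ to $L^p$ (Proposition \ref{prop:translation}\,(ii)), it therefore extends to
\[
\norm{(\Pcald-\Pcaldk)v}_{L^p}\le C|\d-\d_k|\norm{v}_{W^{1,p}}\to 0.
\]

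The main step is showing $\Pcaldk(v_k-v)\to 0$ strongly in $L^p(B_{R'}(0))$ for every $R'$, with $w_k:=v_k-v\rightharpoonup 0$ in $W^{1,p}$. By Proposition \ref{prop:translation}\,(ii), the functions $\Pcaldk w_k$ are bounded in $H^{\lambda,p}(\R^3;\R^3)$ uniformly in $k$ for fixed $\lambda\in(0,\sigma)$, since the operator norms are independent of $\d$. The compact embedding $H^{\lambda,p}\hookrightarrow L^p_{\rm loc}$ then gives a subsequence converging in $L^p_{\rm loc}$ to some $g$. To identify $g=0$, we test against $\psi\in C_c^\infty$ and use the weak representation
\[
\int \Pcaldk w_k\,\psi = \int w_k(x)\int (\psi(x-T_{\d_k}h)-\psi(x))\Div V_\rho(h)\,dh\,dx,
\]
obtained from \eqref{Pcal_3d} by a change of variables and density. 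The inner function converges strongly in $L^{p'}$ (it is in fact bounded uniformly in $k$ and converges, by the same Lipschitz argument as in Lemma \ref{le:uniformconvergence}), while $w_k\rightharpoonup 0$ in $L^p$. Hence $g=0$, and since the limit is unique, the whole sequence converges.

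Localization: because only the restriction to $O_{\d_k}$ matters, local convergence suffices when $O$ is bounded. For unbounded $O$ we additionally need tails. Since $\supp w_k\subset B_R(0)$, we have
\[
\Pcaldk w_k(x)=\int_{|h|\ge |x|-R}(\ldots)\Div V_\rho(h)\,dh
\]
for $|x|$ large. Using the Schwartz decay of $\Div V_\rho$ outside $B_1(0)$, we obtain $\norm{\Pcaldk w_k}_{L^p(B_{R'}^c)}\le \eta(R')\sup_k\norm{w_k}_{L^p}$ with $\eta(R')\to0$ as $R'\to\infty$. Alternatively, we can bound this directly by $\int_{|h|\ge R'-R}|w_k(x+T h)|\,|\Div V_\rho(h)|+|w_k(x)|\ldots$ and note that the second part vanishes outside $B_R$.

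The remaining term $(\mathbbm{1}_{O_{\d_k}}-\mathbbm{1}_{O_{\d}})\Pcald v$ is where we expect the main obstacle. It tends to $0$ by dominated convergence, provided $\mathbbm{1}_{O_{\d_k}}\to\mathbbm{1}_{O_\d}$ almost everywhere. Pointwise convergence holds on $O_\d$, since $O_\d$ is open and the ellipsoids vary continuously. Off the closure it also holds. The delicate part is $\partial O_\d$, together with a possible mismatch when $\d_k\to\d$ with a degenerate component. If $|\partial O_\d|=0$ we are done. Otherwise we would try to avoid the issue altogether by noting that $\Pcaldk v_k$ may already vanish, or be controlled, outside $O_{\d_k}$. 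Failing that, we would restrict to the setting of interest ($O=\Omega$ Lipschitz, a cylinder), where $\partial(O_\d)$ is Lebesgue-null and dominated convergence closes the argument.
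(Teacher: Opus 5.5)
Your core argument is the paper's: uniform $H^{\lambda,p}$-bounds from Proposition~\ref{prop:translation}\,(ii), local compactness, and identification of the limit by moving $\Pcaldk$ onto a test function whose images converge by the Lipschitz estimate. Two differences are cosmetic. You pair $w_k\weakly 0$ with strongly converging test functions, where the paper uses Rellich for $v_k$. You also split off $(\Pcaldk-\Pcald)v$ via \eqref{Pcald_difference}. The paper's proof, however, stops at $L^p_{\rm loc}$-convergence of $\Pcaldk v_k$ and never treats unbounded $O$ or the moving indicators. Your tail bound and your dominated-convergence treatment of $(\mathbbm{1}_{O_{\d_k}}-\mathbbm{1}_{O_\d})\Pcald v$ therefore supply details that are missing there.

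Your hesitation about $\partial O_\d$ is justified, but the problem lies in the statement, not in your argument. If $\d\in(0,1]^2$, then $\partial O_\d\subset T_\d\{y:\dist(y,T_\d^{-1}O)=1\}$. This is a positive level set of a distance function, hence Lebesgue-null for every open $O$, so your proof is complete in that case.

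If a component of $\d$ vanishes, the claim can fail. Take $\d=(0,0)$, $\d_k=(1/k,1/k)$, and $O=\R^3\setminus K$ with $K$ compact, nowhere dense and $\abs{K}>0$. Let $v_k=v\in C_c^\infty(\R^3;\R^3)$ with $v=e_1$ near $K$. Then $O_{\d_k}=\R^3$ because $O$ is dense. Moreover $\Pcaldk v\to\Pcal_{\rho,(0,0)}v=v$, since $\Pcal_{\rho,(0,0)}=\Qcal_{\rho,(0,0)}^{-1}=\mathrm{Id}$. The asserted limit, however, is $\mathbbm{1}_O v$, which differs from $v$ on $K$.

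Incidentally, as printed \eqref{Pcal_3d} would give $\Pcal_{\rho,(0,0)}=0$. The correct formula carries an extra term $\phi(x)$, namely the flux of $V_\rho$ at infinity, which is dropped in the integration by parts leading to \eqref{eq:claim}. That term does no harm to your argument. In your duality step it contributes $\int w_k\cdot\psi\,dx\to0$, and in your tail bound it vanishes because $w_k=0$ outside $B_R(0)$.

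Consequently your fallback idea cannot work: there is no reason for $\Pcaldk v_k$ to vanish or be small outside $O_{\d_k}$, since it is simply a function on $\R^3$. Instead, add the hypothesis $\abs{\partial O_\d}=0$, which is automatic for nondegenerate $\d$. It holds for $O=\omega\times I$ with $\omega$ Lipschitz and every $\d\in[0,1]^2$, which is the only case used in Theorem~\ref{th:state}.
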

\begin{proof}
    First consider $\phi \in C_c^{\infty}(\R^3;\R^3)$, then we can use the Lipschitz continuity of $\phi$ to compute for almost every $x\in\R^3$ that
    \begin{align*}
        \abs{\Pcaldk \phi (x) - \Pcald \phi(x)} &\leq \int_{\R^3} \abs{\phi(x+T_{\d_k}h) - \phi(x+T_{\d}h)}\abs{\Div V_\rho(h)}\,dh\\
        &\leq C \abs{\d_k-\d}\int_{\R^3} \abs{h}\abs{\Div V_\rho(h)}\,dh \to 0,
    \end{align*}
    as $k\to\infty$. This shows that $\Pcaldk \phi \to \Pcald \phi$ uniformly in $\R^3$. For the general case, we note that $(\Pcaldk v_k)_k$ is a bounded sequence in $H^{\lambda,p}(\R^3;\R^3)$ by Proposition~\ref{prop:translation}\,(ii). Hence, up to a non-relabeled subsequence, we deduce that $\Pcaldk v_k \to u$ in $L^p_{\rm loc}(\R^3;\R^3)$. Now, we can compute for all $\phi \in C_c^{\infty}(\R^3)$ that
    \begin{align*}
        \int_{\R^3} u  \phi \,dx &= \lim_{k \to \infty} \int_{\R^3}\Pcaldk v_k \phi\,dx = \lim_{k \to \infty} \int_{\R^3} v_k \Pcaldk \phi\,dx \\
        &= \int_{\R^3}v \Pcald \phi\,dx = \int_{\R^3} \Pcald v \phi\,dx,
    \end{align*}
    where we have used Fubini's theorem to move $\Pcaldk$ and $\Pcald$ from one function to the other, and the fact that $v_k \to v$ in $L^p(\R^3;\R^3)$ with $\supp v_k \subset B_R(0)$ and the uniform convergence $\Pcaldk \phi \to \Pcald \phi$. We conclude that $u=\Pcald v$, and since the limit is independent of the chosen subsequence, the convergence $\mathbbm{1}_{O_{\d_k}}\Pcaldk v_k \to \mathbbm{1}_{O_{\d}}\Pcald v$ in $L^p(\R^3;\R^3)$ also holds for the full sequence.
\end{proof}

Since in the $\Gamma$-limit of the thin-film analysis later on we find the constraint $(\Dd u)e_3 =0$ in $\Omega$, we want to show that this implies that $u$ is constant in the third variable up to an equivalent representative.

\begin{lemma}\label{le:constant_x3}
    Let $\Omega=\omega\times I$ with $\omega\subset\R^2$ a bounded Lipschitz domain, $I\subset \R$ an open interval, and let $\d\in[0,1]^2$. If $u \in \Hd{\O}$ satisfies $(\Dd u)e_3 =0$ a.e.~in $\O$, then there exists an $h \in \Nd{\O}$ (see \eqref{cursed_functions}) such that $\partial_3 (u-h)=0$ in $\O_\d$ in a distributional sense.
\end{lemma}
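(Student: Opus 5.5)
We plan to use the translation mechanism of Proposition~\ref{prop:translation} to move the problem to the local setting, solve it there, and translate back. First set $v:=\Qcald u\in W^{1,p}(\O;\R^3)$. By Proposition~\ref{prop:translation}\,(i), $\nabla v=\Dd u$ on $\O$, so $\partial_3 v=(\Dd u)e_3=0$ a.e.\ in $\O$. Since $\O=\omega\times I$ is a cylinder with connected vertical fibres, the classical argument (mollification in $x_3$, or Fubini with absolute continuity on lines) shows that $v(x)=\bar v(\bar x)$ for some $\bar v\in W^{1,p}(\omega;\R^3)$.

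Next we extend $\bar v$ to some $\bar V\in W^{1,p}(\R^2;\R^3)$ with compact support, which is possible because $\omega$ is a bounded Lipschitz domain. Then set
\[
V(x):=\bar V(\bar x)\,\theta(x_3),
\]
where $\theta\in C_c^\infty(\R)$ satisfies $\theta=1$ on a neighbourhood of $I_{\d_3}:=I+(-\d_3,\d_3)$. This gives $V\in W^{1,p}(\R^3;\R^3)$ with compact support and $V=v$ on $\O$.

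Now define $w:=\mathbbm{1}_{\O_\d}\Pcald V$. By Proposition~\ref{prop:translation}\,(ii), $\Qcald \Pcald V=V$, so $\Dd(\Pcald V)=\nabla V$. Cutting off outside $\O_\d$ does not change the nonlocal gradient on $\O$, since the weak definition only sees values in $\O_\d$. Hence $w\in\Hd{\O}$ with $\Dd w=\nabla V=\nabla v=\Dd u$ on $\O$. Setting $h:=u-w$ therefore gives $h\in\Nd{\O}$, and it remains to show that $\partial_3 w=0$ in $\O_\d$ distributionally.

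This last step is the main point, and we would use the explicit formula \eqref{Pcal_3d}:
\[
\Pcald V(x)=\int_{\R^3}\big(V(x+T_\d k)-V(x)\big)\Div V_\rho(k)\,dk.
\]
For $x\in\O_\d$ and $k$ in the bulk of the kernel, the point $x_3+\d_3k_3$ may leave $I_{\d_3}$, because the kernel $\Div V_\rho$ is not compactly supported. So $\theta$ alone does not ensure that $V$ is $x_3$-independent at all the relevant points. The cleaner route is to choose $V(x)=\bar V(\bar x)$ without any cutoff in $x_3$. This $V$ is not in $W^{1,p}(\R^3)$, but the formula above still makes sense: it is a convergent integral, by the same estimate as in \eqref{Pcaldelta_L^p}, and it is locally $L^p$. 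Its value is $\Pcal_{\bar\rho,\bar\d}$-type in $\bar x$ alone and independent of $x_3$.

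To justify this rigorously we would approximate with the cutoffs $\theta_R(x_3)=\theta(x_3/R)$. On the bounded set $\O_\d$, the difference $\Pcald(\bar V\theta_R)-\Pcald(\bar V)$ is controlled by the tail $\int_{|k|\gtrsim R}|\Div V_\rho|$, which tends to $0$ because $\Div V_\rho$ is Schwartz outside $B_1(0)$. Each $\bar V\theta_R$ gives an admissible $w_R$ with $h_R=u-w_R\in\Nd{\O}$. Moreover, $\mathbbm{1}_{\O_\d}\Pcald(\bar V\theta_R)$ converges in $L^p$ to a function that is independent of $x_3$ on $\O_\d$, and whose nonlocal gradient on $\O$ remains $\nabla v$, by passing to the limit in the weak formulation. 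This yields $w$ and $h$ as required.

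The degenerate cases $\d_3=0$ or $\bar\d=0$ need no separate treatment, since formula \eqref{Pcal_3d} is valid for all $\d\in[0,1]^2$ by Proposition~\ref{prop:translation}\,(ii).
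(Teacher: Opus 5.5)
Your proof is correct, and it takes a genuinely different route from the paper's.

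\textbf{The paper's route.} The paper never applies the three-dimensional inverse to an $x_3$-independent field. After obtaining $\bar v$, it switches to the reduced kernel $\bar\rho$ of Lemma~\ref{le:2dgradient} and sets $\bar u:=\Pcaltwod\bar v$, using the two-dimensional translation operator. It then puts $\tilde u(x):=\mathbbm{1}_{\O_\d}(x)\bar u(\bar x)$, so $x_3$-independence is built in by construction. The identity $\Dd\tilde u=(\bar\nabla\Qcaltwod\bar u,0)=\nabla v$ on $\O$ comes from the computation $\int_\R Q_\rho(\bar z,z_3)\,dz_3=Q_{\bar\rho}(\bar z)$ in Lemma~\ref{le:2dgradient}.

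\textbf{Your route.} You stay in three dimensions and apply $\Pcald$ to the $x_3$-independent extension $\bar V(\bar x)$, which is not in $W^{1,p}(\R^3;\R^3)$. You justify this through the cutoffs $\theta_R$ and the tail bound $\int_{|k|\gtrsim R}|\Div V_\rho|\to 0$. You also correctly identified why a single fixed cutoff fails when $\d_3>0$: the kernel $\Div V_\rho$ is not compactly supported.

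\textbf{What each buys.} The paper's route needs $\bar\rho$ to satisfy (H0)--(H4) again, so that $\Pcaltwod$ exists with the properties of Proposition~\ref{prop:translation}\,(ii). In return it directly produces the two-dimensional object $\bar u\in H^{\bar\rho,p}_{\bar\d}(\omega;\R^3)$, which is used later in Remark~\ref{rem:2drep} and Theorem~\ref{th:stabilized}. Your route avoids the reduced kernel and the whole two-dimensional theory. It uses only the three-dimensional translation mechanism and the decay of $\Div V_\rho$ outside $B_1(0)$, at the cost of an approximation argument. In effect you show that $\Pcald$ preserves $x_3$-independence, which is the same mechanism the paper encodes through $\bar\rho$.

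\textbf{A caveat that does not damage your argument.} As printed, \eqref{Pcal_3d} gives $\Pcal_{\rho,(0,0)}=0$. But $\Qcal_{\rho,(0,0)}$ is the identity, so its inverse must be the identity too; the paper itself uses this when $\bar\d=0$. The integration by parts behind \eqref{eq:claim} drops the boundary term at infinity, where $\phi(y)-\phi(x)\to-\phi(x)$ and $|V_\rho(z)|\sim|z|^{-2}$. Since the flux of $V_\rho$ through large spheres tends to $1$, the correct formula carries an extra term $+\phi(x)$. In your setting this extra term is $V_R(x)$, respectively $\bar V(\bar x)$. These agree on $\O_\d$ once $R$ is large and are independent of $x_3$ there. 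So both your tail estimate and the $x_3$-independence of the limit $w$ remain valid, including in the degenerate cases.
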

\begin{proof}
    Setting $v:=\Qcald u \in W^{1,p}(\O;\R^3)$, we get $\nabla v = \Dd u$ on $\O$, see Proposition \ref{prop:translation}~(i). 
    In particular, $\partial_3 v=0$, so $v$ is constant in the third variable. Hence, by extending to alll of $\R^2$, there exists a $\bar{v} \in W^{1,p}(\R^2;\R^3)$ with $v(x)=\bar{v}(\bar{x})$ for a.e.~$x \in \O$. 
    Now, we can define the operator 
    \begin{align}\label{Ptwo}
        \Pcaltwod\colon W^{1,p}(\R^2;\R^3) \to H^{\bar{\rho},p}_{\bar{\d}}(\R^2;\R^3)
    \end{align}
    associated to the kernel $\bar{\rho}$ from Lemma~\ref{le:2dgradient} as in Proposition~\ref{prop:translation}\,(ii) since $\bar{\rho}$ satisfies all the same assumptions as $\rho$, cf.~Remark~\ref{rem:rhobaradmissible}. 
    If $\bar{\d}=0$, the operator $\Pcaltwod$ is simply the identity and we interpret $H^{\bar{\rho},p}_{\bar{\d}}(\R^2;\R^3)=W^{1,p}(\R^2;\R^3)$. On the other hand, the case $\bar\d>0$ leads back to the well-known two-dimensional nonlocal theory with radial symmetry of the kernel.
    We now set $\bar{u}:=\Pcaltwod \bar{v}$, which satisfies $\Qcaltwod \bar{u}=\bar{v}$, where $\Qcaltwod$ is defined analogously to $\Qcald$ as its two-dimensional in-plane counterpart with kernel $\bar{\rho}$. The function $\tilde{u}$ defined as $\tilde{u}(x):=\mathbbm{1}_{\Omega_{\d}}(x)\bar{u}(\bar{x})$ for a.e.~$x \in \R^3$ then satisfies
    \begin{align*}
        \Dd \tilde{u}(x)=\nabla \Qcald \tilde{u}(x) = (\bar{\nabla} \Qcaltwod\bar{u}(\bar{x}),0)=(\bar{\nabla}\bar{v}(\bar{x}),0) = \nabla v(x) = \Dd u(x),
    \end{align*}
    for a.e.~$x \in \Omega$. Hence, setting $h:=u-\tilde{u} \in \Nd{\O}$ finishes the proof, given that $\partial_3 \tilde{u}=0$ in $\Omega_{\d}$.
\end{proof}

Finally, to make sure that we can strongly approximate the specific representative which is constant in the last variable (see Lemma \ref{le:constant_x3}), we need an approximation result for the spaces $\Ndk{\O}$ and $\Nd{\O}$, which, in turn, relies on the density result in Proposition \ref{prop:density}.

\begin{lemma}\label{le:Napproximation}
     Let $\Omega=\omega\times I$ with $\omega\subset \R^2$ a bounded Lipschitz domain, $I\subset \R$ an open interval, and let $(\d_k)_k\subset [0,1]^2$ with $\d_k \to \d\in [0,1]^2$ as $k\to \infty$. For every $h \in \Nd{\Omega}$, there exists $h_k \in \Ndk{\Omega}$ for every $k\in \N$ such that $h_k \to h$ in $L^p(\R^3;\R^3)$.
\end{lemma}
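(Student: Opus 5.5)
The natural strategy is to pass through the translation mechanism. An element of $\Nd{\Omega}$ is determined, modulo the constant value of $\Qcald h$ on $\Omega$, by its values in the collar. So we build $h_k$ by first approximating $h$ by smooth functions $\phi$, and then correcting $\phi$ by a function whose $\Ddk$-gradient cancels $\Ddk\phi$ on $\Omega$. The correction is obtained with $\Pcaldk$.

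\textbf{Step 1 (smooth approximation).} Fix $\eta>0$. By Proposition~\ref{prop:density} choose $\phi\in C_c^\infty(\R^3;\R^3)$ with
\[
\norm{\phi-h}_{L^p(\R^3;\R^3)}+\norm{\Dd\phi}_{L^p(\Omega;\R^{3\times3})}<\eta,
\]
using $\Dd h=0$ on $\Omega$.

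\textbf{Step 2 (correction at level $\d_k$).} Set $w_k:=\Qcaldk\phi-\Qcald\phi$. By Lemma~\ref{le:uniformconvergence}, $w_k\to0$ uniformly, and likewise $\nabla w_k=\Ddk\phi-\Dd\phi\to0$ uniformly; all supports lie in a fixed ball. Let $E\colon W^{1,p}(\Omega;\R^3)\to W^{1,p}(\R^3;\R^3)$ be a bounded extension operator, which exists since $\Omega$ is Lipschitz, composed with a cutoff to a fixed ball $B_R(0)$. Define
\[
g_k:=E\big((\Qcaldk\phi)|_\Omega\big)-E\big(c\big),\qquad c:=\Qcald h\ \text{on }\Omega,
\]
which is a constant by Remark~\ref{rem:N} a). Here $\Omega$ is connected; if it is not, one argues componentwise. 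Then put
\[
h_k:=\mathbbm{1}_{\Omega_{\d_k}}\big(\phi-\Pcaldk g_k\big).
\]
On $\Omega$, Proposition~\ref{prop:translation}(i)--(ii) give
\[
\Qcaldk h_k=\Qcaldk\phi-g_k=c,
\]
because the indicator does not change $\Qcaldk$ on $\Omega$: the values outside $\Omega_{\d_k}$ are not seen. Hence $\Ddk h_k=0$ on $\Omega$, so $h_k\in\Ndk{\Omega}$.

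\textbf{Step 3 (convergence).} The sequence $g_k$ converges in $W^{1,p}$, hence weakly, to
\[
g:=E\big((\Qcald\phi)|_\Omega\big)-E(c),
\]
with supports in $B_R(0)$. Lemma~\ref{le:convergenceP} yields
\[
\mathbbm{1}_{\Omega_{\d_k}}\Pcaldk g_k\to\mathbbm{1}_{\Omega_\d}\Pcald g
\quad\text{and}\quad
\mathbbm{1}_{\Omega_{\d_k}}\phi\to\mathbbm{1}_{\Omega_\d}\phi
\quad\text{in }L^p.
\]
The second convergence holds since $|\Omega_{\d_k}\triangle\Omega_\d|\to0$ for cylindrical Lipschitz $\Omega$. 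It remains to bound $\norm{\mathbbm{1}_{\Omega_\d}\Pcald g}_{L^p}$ by $C\eta$. Using the uniform bound in Proposition~\ref{prop:translation}(ii) and the boundedness of $E$,
\[
\norm{\Pcald g}_{L^p}\le C\norm{(\Qcald\phi-c)|_\Omega}_{W^{1,p}(\Omega)}\le C\big(\norm{\Qcald(\phi-h)}_{L^p(\Omega)}+\norm{\Dd\phi}_{L^p(\Omega)}\big)\le C\eta,
\]
by Proposition~\ref{prop:translation}(i) applied to $\phi-h$. The same estimate gives
\[
\limsup_k\norm{h_k-h}_{L^p}\le \norm{\mathbbm{1}_{\Omega_\d}\phi-h}_{L^p}+C\eta\le C\eta,
\]
using $h=0$ outside $\Omega_\d$. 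A diagonal argument in $\eta\to0$ then concludes the proof.

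\textbf{Main obstacle.} The delicate point is the estimate in Step 3. It requires that $\Qcald$ applied to $\phi-h$ is controlled on $\Omega$ merely by $\norm{\phi-h}_{L^p(\R^3)}$, which holds by Minkowski's inequality. It also requires that $h_k$ genuinely vanishes outside $(\Omega)_{\d_k}$ while $\Qcaldk h_k$ is unaffected on $\Omega$; this follows because $x-T_{\d_k}z\in\Omega_{\d_k}$ for $x\in\Omega$ and $z\in\supp Q_\rho$.
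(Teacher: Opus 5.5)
Your proof is correct, but it reaches the conclusion by a different mechanism than the paper. Both arguments start the same way: a smooth $\phi$ from Proposition~\ref{prop:density}, with $D_{\rho,\d_k}\phi\to\Dd\phi$ uniformly by Lemma~\ref{le:uniformconvergence}.

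The paper then takes $\tilde h_k=\one_{\Omega_{\d_k}}\phi_{j_k}$ along a diagonal. This function lies only \emph{approximately} in $\Ndk{\Omega}$. The paper defines $h_k$ as the $L^p$-nearest point of $\Ndk{\Omega}$ to $\tilde h_k$ and bounds the error by $C\norm{\Ddk\tilde h_k}_{L^p(\Omega)}$. This uses the Poincar\'e--Wirtinger inequality of Corollary~\ref{cor:poincare}, whose constant is uniform in $\d$ because of the quotient isomorphism in Proposition~\ref{prop:translation}\,(iii).

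You instead correct $\phi$ \emph{exactly}. Subtracting $\one_{\Omega_{\d_k}}\Pcaldk g_k$ forces $\Qcaldk h_k\equiv c$ on $\Omega$. The smallness of the correction then comes from the fixed-$\d$ bound $\norm{\Pcald g}_{L^p}\le C\norm{\Qcald(\phi-h)}_{W^{1,p}(\Omega)}$, and Lemma~\ref{le:convergenceP} handles $k\to\infty$.

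What each route buys:
\begin{itemize}
\item The paper's route is shorter and needs no extension operator. However, it rests on the bounded-domain isomorphism (iii), which the paper only sketches by reference to the literature, and on the existence of a nearest point, i.e.\ weak closedness of $\Ndk{\Omega}$.
\item Your route uses only the full-space inversion of Proposition~\ref{prop:translation}\,(ii), an extension operator, and connectedness of $\Omega$ to get a single constant $c$. It yields an explicit $h_k$. Uniformity in $\d$ enters only through full-space estimates, via Lemma~\ref{le:convergenceP}, rather than through the quotient isomorphism.
\end{itemize}

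Two minor points. First, Proposition~\ref{prop:density} gives $\phi\to h$ only in $L^p(\Omega_\d)$, not in $L^p(\R^3)$ as you state in Step~1. This is harmless: every later estimate uses only $\norm{\phi-h}_{L^p(\Omega_\d)}$, because $\Qcald$ on $\Omega$ sees only values in $\Omega_\d$ and $h=0$ off $\Omega_\d$. Second, to pass from $\Qcaldk h_k=c$ on $\Omega$ to $h_k\in\Ndk{\Omega}$, do not invoke Proposition~\ref{prop:translation}\,(i), which presupposes $h_k\in\Hdk{\Omega}$. Instead, note that the Fubini/duality computation in its proof applies verbatim to any $L^p$ function vanishing off $\Omega_{\d_k}$. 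It shows directly that the weak nonlocal gradient of $h_k$ on $\Omega$ is $\nabla(\Qcaldk h_k)=0$, so membership in $\Hdk{\Omega}$ comes for free.
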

\begin{proof}
    First we show that there exist $\tilde{h}_k \in \Hdk{\Omega}$ for every $k\in\N$ such that $\tilde{h}_k \to h$ in $L^p(\R^3;\R^3)$ and $\Ddk \tilde{h}_k \to 0$ in $L^p(\Omega;\R^{3\times3})$ as $k \to \infty$. 
    To this aim, we use the density result of Proposition~\ref{prop:density}, to find a sequence $(\phi_j)_j \subset C_c^{\infty}(\R^3;\R^3)$ such that $\phi_j \to h$ in $L^p(\Omega_{\d};\R^3)$ and $\Dd \phi_j \to 0$ in $L^p(\Omega;\R^{3\times 3})$. 
    In particular, using Lemma~\ref{le:uniformconvergence}, we deduce that
    \[
    \lim_{j \to \infty} \lim_{k \to \infty}\norm{\Ddk \phi_j}_{L^p(\Omega;\R^{3\times3})}=\lim_{j \to \infty} \norm{\Dd \phi_j}_{L^p(\Omega;\R^{3\times3})}=0.
    \]
    Hence, we can find a suitable diagonal sequence such that $\tilde{h}_k := \mathbbm{1}_{\Omega_{\d_k}} \phi_{j_k}$ satisfies $\tilde{h}_k \to h$ in $L^p(\R^3;\R^3)$ and $\Ddk \tilde{h}_k \to 0$ in $L^p(\Omega;\R^{3\times3})$ as $k \to \infty$. \smallskip

    We now define our actual desired sequence as
    \[
    h_k := \argmin_{h \in \Ndk{\Omega}} \norm{\tilde{h}_k - h}_{L^p(\R^3;\R^3)} \quad \text{for $k\in\N$;}
    \]
    note that the minimization problem has a unique minimizer due to the strict convexity of the $L^p$-norm and the fact that $\Ndk{\Omega}$ is weakly closed in $L^p(\R^3;\R^3)$. 
    It remains to prove that $h_k \to h$ in $L^p(\R^3;\R^3)$. 
    This follows from the Poincar\'{e}--Wirtinger inequality in Corollary~\ref{cor:poincare} because
    \[
    \norm{h_k - \tilde{h}_k}_{L^p(\R^3;\R^3)}\leq C \norm{\Ddk \tilde{h}_k}_{L^p(\Omega;\R^{3\times3})} \to 0 \quad \text{as $k \to \infty$,}
    \]
    where we used that the constant is uniform in $k$. 
    Hence, the sequence $(h_k)_k$ has the same limit as $(\tilde{h}_k)_k$ in $L^p(\R^3;\R^3)$, which is $h$.
\end{proof}

\begin{remark}[Anisotropic nonlocal gradients in $n$ dimensions]\label{rem:higher_dimensions}
    The definition of our nonlocal gradients, whose interaction neighborhoods are ellipsoids with a (possibly vanishing) in-plane and out-of-plane principal radius, is primarily motivated by the analysis of thin films in nonlocal hyperelasticity, see Section \ref{sec:thin_films} below.
    However, this theory can be easily generalized to arbitrary dimensions $n\in\N$ without coupling any directions, giving rise to $n$ (different) principal radii.\smallskip

    More precisely, we define for $\d = (\d_1,\ldots,\d_n)\in[0,1]^n$ and for $\rho$ satisfying (H0)--(H4) the anisotropic nonlocal gradient $\Dd$ analogously to Lemma \ref{lem:anisotropic_nonlocal_gradients} and Definition \ref{def:weak_limit_gradients}, where $T_\d$ is now the diagonal matrix with entries $\d_1,\ldots,\d_n\in[0,1]$.
    In particular, $\Dd$ can again be represented in terms of a convolution if $\d\in(0,1]^n$ as in Remark \ref{rem:positive_horizon}.
    
    Also the Leibniz rule in Lemma \ref{lem:leibniz} and Lemma \ref{lem:product} have natural generalizations to $n$ dimensions with $n$ principal radii; the distinctions about vanishing components of $\d$ simply become more cumbersome.
    While the density of $C_c^\infty$-functions in $H^{\rho,p}_\d$ on the full space $\R^n$ (see Proposition \ref{prop:density_full_space}) is straightforward, we need to be more careful when working with bounded Lipschitz domains $\Omega\subset\R^n$.
    If $\d\in[0,1]^n$, then our proof strategy in Proposition \ref{prop:density} requires $\Omega$ to be an $n$-dimensional cuboid. 
    We would like to point out that this restriction is completely absent if no component of $\d$ vanishes, as this product structure is merely a technical requirement in light of the different cases in Lemma \ref{lem:product}.

    The translation mechanism in Proposition \ref{prop:translation} can be generalized canonically without any additional effort. Hence, the follow-up results in Corollary \ref{cor:poincare}--Proposition \ref{le:convergenceP} as well as Lemma \ref{le:Napproximation} also hold in $n$ dimensions.

    The results in Lemma \ref{le:2dgradient} and Lemma \ref{le:constant_x3} are more specific to our thin-film analysis later on but can be generalized as well. Particularly, the strategy in the proof of Lemma \ref{le:2dgradient} can be iterated since integrating $\rho$ in one variable preserves the assumptions (H0)--(H4).
    If one is not interested in consistency with lower-dimensional gradients, then assumption (H1) can even be weakened again to its source version in \cite{BMS24,CKS25}.
\end{remark}

\section{Thin films in nonlocal hyperelasticity}\label{sec:thin_films}

For the hyperelastic nonlocal thin membrane with reference configuration $\Omega^\e=\omega\times (0,\e)$, where $\omega\subset\R^2$ is a bounded Lipschitz domain and has height $\e>0$, we consider two (in-plane and out-of-plane) interaction radii $\d = \d(\eps)\in[0,1]^2$. 
We thus define its stored elastic energy per unit volume $\Ecal_\eps\colon L^p(\R^3;\R^3)\to[0,\infty]$ as
\begin{align}\label{thin_state_energy}
    \Ecal_\e(v)= 
    \begin{cases}
        \displaystyle\frac{1}{\e}\int_{\Omega^\e} W(D_{\rho,\d} v)\dd y &\text{ if } v\in H^{\rho,p}_\delta(\O^\e;\R^3),\\
        \infty  & \text{ otherwise,}
    \end{cases}
\end{align}
with $\Dd$ and $\Hd{\O^\eps}$ as in Definition \ref{def:weak_limit_gradients}. 
The energy density $W\colon\R^{3\times 3}\to [0,\infty)$ shall be continuous and satisfy the growth condition
\begin{align}\label{Wgrowth}
	c|F|^p - C \leq W(F)\leq C(|F|^p + 1)\quad \text{ for all } F\in \R^{3\times 3}
\end{align}
for some constants $C,c>0$.
After the typical thin-film rescaling argument
\begin{align*}
	u(x) = v(y)\quad\text{with}\quad x= T_{(1,\eps)}^{-1}y,
\end{align*}
the energy \eqref{thin_state_energy} turns into $\Ical_\eps\colon L^p(\R^3;\R^3)\to[0,\infty]$, given by
\begin{align}\label{rescaled_state_energy}
   \Ical_\eps(u)= 
    \begin{cases}
        \displaystyle\int_{\Omega} W\big((D_{\rho,\d^\e} u) T_{(1,\e)}^{-1}\big)\dd x &\text{ if } u\in \He ,\\
        \infty  & \text{ otherwise}
    \end{cases}
\end{align}
with $\Omega:=\Omega_1$ and $\d^\e \coloneqq (\bar{\d}, \frac{\d_3}{\e})\in[0,1]^2$, where we implicitly assume that $\d_3\leq \e$.
Note that the choice $\d = (0,0)$ leads to the classical hyperelastic energies \cite{LeR95} aside from boundary conditions (or \cite{FJM02, FJM06} if scaled accordingly), so that our approach is a natural generalization of the local theory to the nonlocal setting.

\begin{theorem}\label{th:state}
    Let $p\in(1,\infty)$ and $\d=\d(\e)\in [0,1]^2$ for $\e>0$ be given and assume that $\d^\e:=(\bar\d,\frac{\d_3}{\e})\to \d^0\in [0,1]^2$ as $\e\to 0$. 
    Then, the following holds:
    \begin{itemize}
        \item[(i)] The sequence $(\Ical_\e)_\e$ in \eqref{rescaled_state_energy} $\Gamma$-converges with respect to the strong topology in $L^p(\R^3;\R^3)$ to the functional $\Ical\colon L^p(\R^3;\R^3) \to [0,\infty]$ defined by
    \begin{align}\label{limit_state_energy}
        \Ical(u)=
        \begin{cases}
            \displaystyle \int_\Omega \Wmin^\qc(\Dbar u)\dd x & \text{ if }u\in \Hzero, (\Dzero u)\restrict{\Omega}e_3 =0,\\
            \infty  &\text{ otherwise,}
        \end{cases}
    \end{align}
    where $\Dbar$ are the first two columns of $\Dzero$ as in Definition \ref{def:weak_limit_gradients}, and $\Wmin^\qc$ denotes the quasiconvex envelope of the function
    \[
    \Wmin\colon \R^{3\times 2} \to [0,\infty), \quad \Wmin(\bar{A})=\min_{a \in \R^3} W(\bar{A}\,|\, a).
    \]
    
    \item[(ii)] For every sequence $(u_\e)_\e\subset L^p(\R^3;\R^3)$ with $\sup_\e \Ical_\e(u_\e) < \infty$, there is a subsequence (not relabeled) such that $(u_\e)_\e$ converges in $L^p(\R^3;\R^3)$ -- up to representatives in $\Ne$ -- to some $u\in \Hzero$ with $\partial_3 u =0$ on $\Omega_{\d^0}$, i.e., there exist $h_\e\in\Ne$ such that $u_\e - h_\e\to u$ in $L^p(\R^3;\R^3)$ and $\De u_\e \weakly \Dzero u$ in $L^p(\Omega;\R^3)$.
    \end{itemize}
\end{theorem}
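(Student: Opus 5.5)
The strategy is to use the translation mechanism of Proposition~\ref{prop:translation} to move everything to the local membrane setting of \cite{LeR95}, and then pull the results back with $\Pcal$. For the compactness statement (ii), take $\sup_\e\Ical_\e(u_\e)<\infty$. By \eqref{Wgrowth}, $\norm{(\De u_\e)T_{(1,\e)}^{-1}}_{L^p(\Omega)}$ is bounded. Since $\e\le 1$, this bounds $\De u_\e$ in $L^p(\Omega;\R^{3\times3})$, and it also gives $\norm{(\De u_\e)e_3}_{L^p(\Omega)}\le C\e$. Corollary~\ref{cor:compactness} then yields $h_\e\in\Ne$ with $u_\e-h_\e\to u$ in $L^p(\R^3;\R^3)$.

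To identify the limit, use the proof of that corollary. There $u_\e-h_\e=\mathbbm{1}_{\Omega_{\d^\e}}\Pcale\tilde v_\e$, where $\tilde v_\e$ is a bounded sequence in $W^{1,p}(\R^3;\R^3)$ with supports in a fixed ball, and $\nabla\tilde v_\e=\De u_\e$ on $\Omega$. Passing to a weakly convergent subsequence $\tilde v_\e\weakly v$, Lemma~\ref{le:convergenceP} gives $u=\mathbbm{1}_{\Omega_{\d^0}}\Pzero$-type identification $u=\mathbbm 1_{\Omega_{\d^0}}\Pcalzero v\in\Hzero$ and $\Dzero u=\nabla v$ on $\Omega$, so $\De u_\e\weakly\Dzero u$. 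Because $\partial_3\tilde v_\e\to0$ on $\Omega$, we get $(\Dzero u)e_3=0$. After modifying $h_\e$ by the elements $h\in\Nzero$ from Lemma~\ref{le:constant_x3} (approximated in $\Ne$ via Lemma~\ref{le:Napproximation}), we may in addition assume $\partial_3u=0$ on $\Omega_{\d^0}$.

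For the liminf inequality in (i), let $u_\e\to u$ in $L^p$ with bounded energy. Set $v_\e:=\Qcale u_\e\in W^{1,p}(\Omega)$. Then $\nabla v_\e=\De u_\e$ and $\Ical_\e(u_\e)=\int_\Omega W(\bar\nabla v_\e\,|\,\e^{-1}\partial_3 v_\e)\,dx$. Since $\Qcale u_\e\to\Qcalzero u$ in $L^p(\Omega)$, the argument of Lemma~\ref{le:uniformconvergence} combined with the $L^p$-contractivity of $\Qcal$ shows $v_\e\to v:=\Qcalzero u$. Now $W(\bar A|a)\ge\Wmin(\bar A)\ge\Wmin^\qc(\bar A)$, and $\bar\nabla v_\e\weakly\bar\nabla v=\Dbar u$. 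Weak lower semicontinuity of $\int\Wmin^\qc(\bar\nabla\cdot)$ gives the claim. Here $\Wmin^\qc$ has $p$-growth and is quasiconvex in the $2$-dimensional sense, so we apply lower semicontinuity slicewise in $x_3$, or use the $3$-dimensional function $F\mapsto\Wmin^\qc(Fe_1|Fe_2)$, which is quasiconvex on $\R^{3\times3}$.

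For the recovery sequence in (i), take $u$ with $\Ical(u)<\infty$ and apply Lemma~\ref{le:constant_x3}: write $u=h+\tilde u$ with $h\in\Nzero$ and $\tilde u=\mathbbm1_{\Omega_{\d^0}}\bar u(\bar x)$, where $\bar u=\Pcal_{\bar\rho,\bar\d^0}\bar v$ and $\bar v\in W^{1,p}(\R^2;\R^3)$. By the classical membrane result \cite{LeR95}, there are $w_\e\in W^{1,p}(\R^3;\R^3)$, with supports in a fixed ball, such that $w_\e\weakly\bar v$, $w_\e\to \bar v$ in $L^p$, and $\int_\Omega W(\bar\nabla w_\e|\e^{-1}\partial_3w_\e)\to\int_\omega\Wmin^\qc(\bar\nabla\bar v)$. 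A standard argument with equi-integrability and the upper growth bound lets us ignore boundary effects. Define $u_\e:=\mathbbm1_{\Omega_{\d^\e}}\Pcale w_\e+h_\e$, where $h_\e\in\Ne$, $h_\e\to h$, comes from Lemma~\ref{le:Napproximation}. Then $\De u_\e=\nabla w_\e$ on $\Omega$, so the energies match. Lemma~\ref{le:convergenceP} gives $\mathbbm1\Pcale w_\e\to\mathbbm1_{\Omega_{\d^0}}\Pcalzero \bar v$. The last step is to check that $\Pcalzero$ applied to the $x_3$-independent extension of $\bar v$ equals $\bar u$. This holds because $\Qcalzero$ of $\bar u(\bar x)$ equals $\Qcal_{\bar\rho,\bar\d^0}\bar u$ by Lemma~\ref{le:2dgradient}, together with the injectivity of $\Qcal$.

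I expect this last point to be the main obstacle: handling the cutoff of the $x_3$-independent $\bar v$, which is not in $W^{1,p}(\R^3)$. I would first multiply by a cutoff in $x_3$ equal to $1$ near $\overline{I}_{\d_3^0}$. The interaction region in $x_3$ is bounded because $\d^\e$ stays in $[0,1]^2$, so the identification on $\Omega_{\d^0}$ should be unaffected by the cutoff.
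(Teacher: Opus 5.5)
Your compactness and liminf arguments are sound and essentially follow the paper. Identifying the compactness limit through Lemma~\ref{le:convergenceP}, applied to the construction in Corollary~\ref{cor:compactness}, is an equally valid substitute for the paper's testing argument with Lemma~\ref{le:uniformconvergence}.

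The genuine gap is in the recovery sequence, at the step you flagged, and your proposed resolution does not work. Take the cut-off extension $V(x)=\chi(x_3)\bar v(\bar x)$. The identity $\mathbbm{1}_{\Omega_{\d^0}}\Pcalzero V=\tilde u=\mathbbm{1}_{\Omega_{\d^0}}\bar u(\bar x)$ fails in general when $\d^0_3>0$.

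A bounded interaction range is a property of $\Qcalzero$, not of its inverse. By \eqref{Pcal_3d}, $\Pcalzero$ integrates against $\Div V_\rho$, which agrees with a Schwartz function outside $B_1(0)$ but is not compactly supported. So the values of $\Pcalzero V$ on $\Omega_{\d^0}$ do see the region where $\chi\neq 1$.

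Injectivity of $\Qcal$ does not rescue this. All you know is $\Qcalzero(\Pcalzero V)=V=\bar v(\bar x)$ on the slab where $\chi=1$. $\Qcalzero$ is injective only as a map on all of $L^p(\R^3;\R^3)$, not when its values are known only on a bounded set. The failure of such local injectivity is precisely the nontriviality of $\Nzero$ (Remark~\ref{rem:N}). In fact, both functions vanish outside $\Omega_{\d^0}$ and have nonlocal gradient $(\bar\nabla\bar v\,|\,0)$ on $\Omega$, so $\mathbbm{1}_{\Omega_{\d^0}}\Pcalzero V-\tilde u$ always lies in $\Nzero$, and in general is nonzero. Your $u_\e$ would then converge to $u$ plus this element, not to $u$.

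The fix is to avoid the identification altogether, which is what the paper does. Let $w_\e\weakly w$ in $W^{1,p}(\R^3;\R^3)$ be any extension of the classical recovery sequence with supports in a fixed ball, so $w=\Qcalzero u$ on $\Omega$. Set $g:=\mathbbm{1}_{\Omega_{\d^0}}\Pcalzero w$. Then $\Dzero g=\nabla w=\nabla(\Qcalzero u)=\Dzero u$ on $\Omega$, hence $u-g\in\Nzero$.

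Lemma~\ref{le:Napproximation} provides $h_\e\in\Ne$ with $h_\e\to u-g$. Define $u_\e:=\mathbbm{1}_{\Omega_{\d^\e}}\Pcale w_\e+h_\e$. By Lemma~\ref{le:convergenceP} this converges to $g+(u-g)=u$. Since $h_\e$ has vanishing nonlocal gradient on $\Omega$, the energies are exactly those of the classical recovery sequence. In particular, neither the decomposition from Lemma~\ref{le:constant_x3} nor the $x_3$-cutoff is needed in the recovery step.
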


\begin{remark}[Lower-dimensional model]\label{rem:2drep}
    The functional in \eqref{limit_state_energy} can be reformulated to highlight its two-dimensional character.
    If $u\in \Hzero$ with $(\Dzero u)e_3 =0$ on $\Omega$, then $v\coloneqq\Qcalzero u\in W^{1,p}(\Omega;\R^3)$ satisfies $\partial_3 v =0$. 
    In particular, the first two columns $\bar\nabla v = (\partial_1 v\,|\, \partial_2 v)$ of $\nabla v$ are independent of $x_3$. 
    Since $\nabla v = \Dzero u$, the first two columns $\Dbar u$ of $\Dzero u$ are also independent of $x_3$.
    
    Moreover, there exists an $h\in \Nzero$ such that $\bar{u}\coloneqq u-h$ is independent of $x_3$, see Lemma \ref{le:constant_x3}, and can thus be interpreted as a function on $\omega_{\bar{\d}^0}$.
    These two observations lead to
    \begin{align*}
        \Ical(u) = \int_{\omega} \Wmin^\qc( \Dtwo \bar{u}) \dd \bar{x},
    \end{align*}
    where $\Dtwo$ is the two-dimensional (non)local gradient defined in Lemma~\ref{le:2dgradient}, see also Remark \ref{rem:rhobaradmissible}.
\end{remark}

\begin{proof}[Proof of Theorem~\ref{th:state}]
\textit{Step 1: Compactness.} Let $(u_\epsilon)_\epsilon \subset L^p(\R^3;\R^3)$ be a sequence such that
\[
\sup_{\epsilon}\Ical_\epsilon(u_\epsilon) < \infty.
\]
Then, due to the lower bound \eqref{Wgrowth} on $W$, we find that $u_\epsilon \in \He$ for all $\epsilon >0$ and
\[
\sup_{\epsilon} \norm{(\De u_\epsilon)}_{L^p(\Omega;\R^{3\times3})}\leq \sup_{\epsilon} \norm{(\De u_\epsilon) T_{(1,\epsilon)}^{-1}}_{L^p(\Omega;\R^{3\times3})} < \infty.
\]
We conclude from Corollary~\ref{cor:compactness} that there are (up to a non-relabeled subsequence) $\tilde{h}_\epsilon \in \Ne$ such that $u_\epsilon-\tilde{h}_\epsilon \to \tilde{u}$ in $L^p(\R^3;\R^3)$. To show that $\tilde{u} \in \Hzero$, we first note that $\tilde{u}=0$ in $(\Omega_{\d^0})^c$ since $\tilde{u}_\epsilon=0$ on $(\Omega_{\d^\e})^c$ for all $\epsilon >0$ and $\d^\e \to \d^0$.

Moreover, we find $\De u_\epsilon \rightharpoonup U$ in $L^p(\Omega;\R^{3\times3})$ with $Ue_3=0$. Via integration by parts we then find for all $\psi \in C_c^{\infty}(\Omega;\R^{3})$ that
\begin{align*}
    \int_{\Omega} U \cdot \psi \,dx &= \lim_{\epsilon \to 0} \int_{\Omega} \De u_\epsilon \cdot \psi\,dx = \lim_{\epsilon \to 0} \int_{\Omega} \De(u_\epsilon-\tilde{h}_\e) \cdot \psi\,dx \\
    &= -\lim_{\epsilon \to 0}\int_{\R^3}(u_\epsilon -\tilde{h}_\e)  \dive \psi \,dx = -\int_{\R^3}\tilde{u} \divzero \psi\,dx,
\end{align*}
where we have used Lemma~\ref{le:uniformconvergence} in the last equality together with the convergence $u_\epsilon -\tilde{h}_\e \to \tilde{u}$ in $L^p(\R^3;\R^3)$ and the fact that $u_\epsilon=\tilde{h}_\e = 0$ outside $\Omega_{\d^\e}$ for all $\epsilon >0$. We conclude that $\tilde{u} \in \Hzero$ with $\Dzero \tilde{u} = U$, and, in particular, $(\Dzero \tilde{u})e_3 =Ue_3=0$.
By Lemma~\ref{le:constant_x3}, we now find a $h \in \Nzero$ such that $u:=\tilde{u}-h$ is constant in $x_3$ on $\Omega_{\d^0}$. Using Lemma~\ref{le:Napproximation}, we can find $h_\epsilon \in \Ne$ such that $u_\epsilon - h_\epsilon \to u$ in $L^p(\R^3;\R^3)$, as desired. \smallskip

\textit{Step 2: Liminf-inequality.} Given the previous compactness step, we now define $v_\epsilon :=\Qcale (u_\epsilon-h_\e)$ for $\epsilon >0$ and $v:=\Qcalzero u$ with $\Qcale$ and $\Qcalzero$ as in Proposition \ref{prop:translation}. Then, it holds that $\nabla v_\epsilon = \De u_\epsilon$ and $\nabla v = \Dzero u$. We then use the bound in Proposition \ref{prop:translation} (i) and Lemma \ref{le:uniformconvergence} to infer that $v_\epsilon \rightharpoonup v$ in $W^{1,p}(\Omega;\R^3)$. In the spirit of the classical dimension reduction result from \cite[Theorem~2]{LeR95}, we now deduce that
\begin{align*}
    \liminf_{\epsilon \to 0}\Ical_\epsilon(u_\epsilon)
    =\liminf_{\epsilon \to 0} \int_{\Omega}W((\nabla v_\epsilon) T_{(1,\epsilon)}^{-1})\,dx
    \geq \int_{\Omega} \Wmin^\qc(\bar{\nabla} v)\,dx=\Ical(u).
\end{align*}\smallskip

\textit{Step 3: Recovery sequence.} Let $u \in \Hzero$ with $(\Dzero u)e_3=0$ in $\Omega$, then we can define $v:=\Qcalzero u \in W^{1,p}(\Omega;\R^3)$ such that $\partial_3 v =0$. Using the classical recovery sequence from \cite[Theorem~2]{LeR95} (without boundary conditions), we find a bounded sequence $(v_\epsilon)_\epsilon \subset W^{1,p}(\Omega;\R^3)$ such that $v_\epsilon \to v$ in $L^p(\Omega;\R^3)$ and
\begin{equation}\label{eq:recoveryclassical}
    \lim_{\epsilon \to 0} \int_{\Omega} W( (\nabla v_\epsilon) T_{(1,\epsilon)}^{-1})\,dx = \int_{\Omega} \Wmin^\qc (\bar{\nabla}v)\,dx.
\end{equation}
By extending the sequence $(v_\epsilon)_\epsilon$ and $v$ outside $\Omega$, we obtain a bounded sequence $(\tilde{v}_\epsilon)_\epsilon \subset W^{1,p}(\R^3;\R^3)$ and $\tilde{v} \in W^{1,p}(\R^3;\R^3)$ such that $\tilde{v}_\epsilon \to \tilde{v}$ in $L^p(\R^3;\R^3)$. In fact, we may assume $\supp \tilde{v}_\epsilon \subset B_R(0)$ for some $R>0$. Then, by Lemma~\ref{le:convergenceP}, we infer that 
\[
\tilde{u}_\epsilon:=\mathbbm{1}_{\Omega_{\d^\e}}\Pcale \tilde{v}_\epsilon \to \mathbbm{1}_{\Omega_{\d^0}}\Pcalzero \tilde{v}=:\tilde{u} \quad \text{in $L^p(\R^3;\R^3)$.}
\]
In addition, $\Dzero \tilde{u}=\nabla v = \Dzero u$ on $\Omega$, so $h:=\tilde{u}-u \in \Nzero$. Using Lemma~\ref{le:Napproximation}, we find a sequence $h_\epsilon \in \Ne$ for $\epsilon >0$ such that $h_\epsilon \to h$ in $L^p(\R^3;\R^3)$. If we now set $u_\epsilon:=\tilde{u}_\epsilon-h_\epsilon,$ we infer that
\[
u_\epsilon \to \tilde{u}-h=u \quad \text{in $L^p(\R^3;\R^3)$.}
\]
Finally, we now find
\begin{align*}
    \lim_{\epsilon \to 0} \Ical_\epsilon(u_\epsilon) = \lim_{\epsilon \to 0} \int_{\Omega} W((\nabla v_\epsilon) T_{(1,\epsilon)}^{-1})\,dx = \int_{\Omega} \Wmin^\qc (\bar{\nabla}v)\,dx = \Ical(u),
\end{align*}
by using \eqref{eq:recoveryclassical} and the fact that $\De u_\epsilon =\De \tilde{u}_\epsilon = \nabla v_\epsilon$ in $\Omega$ for all $\epsilon >0$ given that $h_\epsilon \in \Ne$.
\end{proof}

As an application of the $\Gamma$-convergence result, we present the convergence of minimizers. In this case, we can also include applied forces, but they have to be orthogonal to the functions with zero nonlocal gradient, since otherwise the energy would become unbounded from below. Since $\Ne$ depends on~$\epsilon$, the force densities must also depend on $\e$. We can choose such a sequence by fixing $V \in L^{p'}(\R^3;\R^{3 \times 3})$ with $\frac{1}{p}+ \frac{1}{p'}=1$ such that $V=0$ in $\Omega^c$ for which
\[
\dive V \to \divzero V \in L^{p'}(\R^3;\R^3);
\]
in particular, all these operators need to exist in a weak sense. Then, we can define our sequence of densities as 
\begin{equation}\label{eq:forces}
f_\epsilon :=\dive V \qand f_0:=\divzero V,
\end{equation}
for which we find
\[
\int_{\R^3} f_\epsilon \cdot h\,dx = \int_{\R^3} \dive V \cdot h\,dx = -\int_{\Omega} V \cdot \De h\,dx = 0 \quad \text{for all $h \in \Ne$}
\]
and the same for $f_0$ and $\Nzero$; here, we used the nonlocal integration by parts for two non-smooth functions, whose validity can be shown by approximating $h$ by smooth functions, cf.~Proposition~\ref{prop:density}.
 
\begin{corollary}\label{cor:minimizers}
    Let $p\in(1,\infty)$ and $\d=\d(\e)\in [0,1]^2$ for $\e>0$ be given and assume that $\d^\e=(\bar\d,\frac{\d_3}{\e})\to \d^0\in [0,1]^2$ as $\e\to 0$.
    Moreover, let $(f_\epsilon)_\e,f_0$ be as in \eqref{eq:forces}. Then, any sequence of minimizers of
    \[
    u \mapsto \int_{\Omega} W\big((D_{\rho,\d^\e} u) T_{(1,\e)}^{-1}\big)\dd x - \int_{\Omega_{\d^\e}}f_\epsilon \cdot u \,dx \quad \text{for $u \in \He$}
    \]
    converges in $L^p(\R^3;\R^3)$ up to subsequence and representatives in $\Ne$ to a minimizer of
    \[
    \bar{u} \mapsto \int_{\omega}\Wmin^\qc(\Dtwo \bar{u})\,d\bar{x} - \int_{\omega_{\bar{\d}^0}} \bar{f}_0 \cdot \bar{u}\,dx
    \]
    over all $\bar{u} \in H^{\bar{\rho},p}_{\bar{\d}^0}(\omega;\R^3)$ (cf.~\eqref{Htwod}) with $\bar{f}_0(\bar{x}):=\int_{\R}f_0(\bar{x},x_3)\,dx_3.$
\end{corollary}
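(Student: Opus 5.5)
The plan is the standard argument that $\Gamma$-convergence plus equicoercivity implies convergence of minimizers. The forcing term is handled as a continuous perturbation, modulo the invariance under $\Ne$. Write $\Fcal_\e(u):=\Ical_\e(u)-\int f_\e\cdot u\,dx$ on $\He$ and $\Fcal(u):=\Ical(u)-\int f_0\cdot u\,dx$.

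\emph{Step 1: reduction modulo the kernel.} The orthogonality $\int f_\e\cdot h=0$ for $h\in\Ne$ (shown after \eqref{eq:forces}) and $\De h=0$ on $\Omega$ give $\Fcal_\e(u-h)=\Fcal_\e(u)$. Hence for any minimizer $u_\e$ we may use Corollary~\ref{cor:poincare} to pick $h_\e$ with $\norm{u_\e-h_\e}_{L^p}\le C\norm{\De u_\e}_{L^p(\Omega)}$, with $C$ uniform in $\e$.

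\emph{Step 2: energy bound.} Comparing with $u=0$ gives $\Fcal_\e(u_\e)\le \Fcal_\e(0)=|\Omega|W(0)$. The growth \eqref{Wgrowth}, the fact that $|T_{(1,\e)}^{-1}|\ge1$ columnwise, and the duality identity $\int f_\e\cdot u=-\int_\Omega V\cdot\De u$ (since $f_\e=\dive V$ with $V$ supported in $\Omega$) yield
\[
c\norm{\De u_\e}^p_{L^p}-C\le C+\norm{V}_{L^{p'}}\norm{\De u_\e}_{L^p}.
\]
Since $p>1$, this bounds $\norm{\De u_\e}_{L^p}$, and therefore $\sup_\e\Ical_\e(u_\e)<\infty$. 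This rewriting of the force term through $\De$ is what makes the bound independent of the kernel component.

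\emph{Step 3: compactness and liminf.} Theorem~\ref{th:state}(ii) gives $h_\e\in\Ne$ and $u\in\Hzero$ with $\partial_3u=0$ on $\Omega_{\d^0}$ such that $w_\e:=u_\e-h_\e\to u$ in $L^p$. The force term converges: $\int f_\e\cdot w_\e=-\int_\Omega V\cdot\De u_\e\to-\int_\Omega V\cdot\Dzero u=\int f_0\cdot u$, using $\De u_\e\weakly\Dzero u$. Alternatively it follows from strong convergence of $w_\e$ together with $f_\e\to f_0$ in $L^{p'}$. Together with the liminf inequality of Theorem~\ref{th:state}(i), this gives $\liminf\Fcal_\e(w_\e)\ge\Fcal(u)$.

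\emph{Step 4: limsup and identification.} Fix any competitor $\tilde u$ and take its recovery sequence from Theorem~\ref{th:state}(i). The force term passes to the limit because $f_\e\to f_0$ in $L^{p'}$ and the recovery sequence converges strongly in $L^p$. Minimality of $w_\e$ then gives $\Fcal(u)\le\Fcal(\tilde u)$, so $u$ minimizes $\Fcal$.

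\emph{Step 5: transfer to the two-dimensional functional.} By Remark~\ref{rem:2drep}, any admissible $u$ can be written, up to an element of $\Nzero$ (which changes neither term), as $\mathbbm{1}_{\Omega_{\d^0}}\bar u(\bar x)$. Then $\Ical(u)=\int_\omega\Wmin^\qc(\Dtwo\bar u)$, and by Fubini $\int f_0\cdot u=\int_{\omega_{\bar\d^0}}\bar f_0\cdot\bar u$, where $x_3$ ranges over the fibre of $\Omega_{\d^0}$ containing the support of $f_0$; this is where $V=0$ off $\Omega$ is used. Conversely, every $\bar u\in H^{\bar\rho,p}_{\bar\d^0}(\omega;\R^3)$ lifts to an admissible $u$ by Lemma~\ref{le:2dgradient} and Remark~\ref{rem:rhobaradmissible}. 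So the two minimization problems correspond.

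\textbf{Main obstacle.} I expect Step 5 to be the delicate part. When $\d_3^0>0$ the support of the lifted function is $\Omega_{\d^0}$, not $\Omega$, and one must check that $\bar f_0$ integrates over the right fibres. One must also check that the lift lies in $\Hzero$, in particular for $\bar\d^0=0$, where $\Dtwo$ should be read as $\bar\nabla$.
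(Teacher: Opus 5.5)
Your proposal is correct and follows the same route as the paper, whose proof simply combines the $\Gamma$-convergence and compactness of Theorem~\ref{th:state} with convergence of the force terms, invariance under $\Ne$, and the two-dimensional representation of Remark~\ref{rem:2drep}. Your Step~2 also supplies the energy bound $\sup_\e\Ical_\e(u_\e)<\infty$, which the paper leaves implicit. The two points you flag in Step~5 resolve directly: $f_0=\divzero V$ vanishes outside $\overline{\Omega_{\d^0}}$ since $V=0$ off $\Omega$, and for $x\in\Omega$ one has $\Qcalzero(\mathbbm{1}_{\Omega_{\d^0}}\bar u)(x)=\Qcal_{\bar\rho,\bar\d^0}\bar u(\bar x)$ by Fubini (as in the proof of Proposition~\ref{prop:translation}\,(i)), so the lift $\mathbbm{1}_{\Omega_{\d^0}}\bar u$ lies in $\Hzero$ with vanishing third column of $\Dzero$ on $\Omega$.
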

\begin{proof}
    This is an immediate consequence of Theorem~\ref{th:state} by using the properties of $\Gamma$-convergence together with the convergence of the force contributions and the fact that all functionals are invariant under translations in $\Ne$. Additionally, in the limit we used the two-dimensional representation of the $\Gamma$-limit from Remark~\ref{rem:2drep}.
\end{proof}
While this is a mathematically satisfactory result, from a practical perspective, having to deal with the functions in $\Ne$ is quite unnatural; not least for the fact that this implies that there are non-constant deformations which do not incur additional stored energy. Therefore, we will now also pursue an approach where we include a penalty or regularization term in the spirit of \cite[Section 6]{Sil17}, which accounts for the functions in $\Ne$. This should stabilize the functional so that the compactness and convergence of minimizers holds without taking different representatives. 

We consider the stabilizing term
\[
v \mapsto \frac{1}{\epsilon}\int_{\Omega^\e_\d} \abs{v}^p\,dy + \frac{1}{\epsilon^2}\int_{\Omega^\e_\d}\int_{\Omega^\e_\d}\frac{\abs{v(\bar{y},y_3)-v(\bar{y},y_3')}^p}{\abs{y_3-y_3'}}\,dy\,dy',
\]
which will be added to the energy $\Ecal_\epsilon$; the first term ensures that large deformations with zero nonlocal gradient are penalized, while the second nonlocal term is there to make sure that the limiting functions are constant in the third argument. After rescaling, the stabilizing term takes the form
\[
\Scal_\epsilon (u):= \int_{\Omega_{\d^\e}}\abs{u}^p\,dx + \int_{\Omega_{\d^\e}}\int_{\Omega_{\d^\e}}\frac{\abs{u(\bar{x},x_3)-u(\bar{x},x_3')}^p}{\abs{\epsilon(x_3-x_3')}}\,dx\,dx' \quad \text{for $u \in L^p(\R^3;\R^3)$.}
\]
Hence, we are interested in the $\Gamma$-convergence of the functionals 
\begin{equation}\label{eq:stabilizedI}
\Ical_\epsilon^{\rm stab}:=\Ical_\epsilon+\lambda\Scal_\epsilon
\end{equation}
with $\Ical_\e$ as in \eqref{rescaled_state_energy} and for some stabilization parameter $\lambda>0$. We have the following result.

\begin{theorem}\label{th:stabilized}
    Let $p\in(1,\infty)$ and $\d=\d(\e)\in [0,1]^2$ for $\e>0$ be given and assume that $\d^\e:=(\bar\d,\frac{\d_3}{\e})\to \d^0\in [0,1]^2$ as $\e\to 0$.
    Then, the following holds:
    \begin{itemize}
        \item[(i)] The sequence $(\Ical_\e^{\rm stab})_\e$ in \eqref{eq:stabilizedI} $\Gamma$-converges with respect to the weak topology in $L^p(\R^3;\R^3)$ to the functional $\Ical^{\rm stab}\colon L^p(\R^3;\R^3) \to [0,\infty]$ defined by
        \begin{align*}
            \Ical^{\rm stab}(u)=
            \begin{cases}
                \displaystyle\int_\Omega \Wmin^\qc(\Dbar u)\dd x +\lambda\int_{\Omega_{\d^0}}\abs{u}^p\,dx  & \text{ if $u\in \Hzero$ with $\partial_3 u|_{\Omega_{\d^0}} =0$},\\
                \infty  &\text{ otherwise.}
            \end{cases}
        \end{align*}
        In particular, if $\bar{u} \in H^{\bar{\rho},p}_{\bar{\d}^0}(\omega;\R^3)$ (cf.~\eqref{Htwod}) is the two-dimensional representation of $u$, we have
        \[
        \Ical^{\rm stab}(u) = \int_{\omega}\Wmin^\qc(\Dtwo \bar{u})\,d\bar{x} + \lambda \int_{\omega_{\bar{\d}^0}}d_{\d^0}\abs{\bar{u}}^p\,d\bar{x}
        \]
        with
        $$d_{\d^0}\colon \omega_{\bar{\d}^0}\to [0,\infty),\ \bar{x}\mapsto \begin{cases}1+2\d_3^0\sqrt{1-(\frac{1}{\bar{\d}^0}\dist(\bar{x},\omega))^2} & \text{ if }\bar{\d}^0 \neq 0,\\ 1+2\d_3^0 & \text{ if } \bar{\d}^0=0.\end{cases}$$
    \item[(ii)] For every sequence $(u_\e)_\e\subset L^p(\R^3;\R^3)$ with $\sup_\e \Ical_\e(u_\e) < \infty$, there is a subsequence (not relabeled) such that $(u_\e)_\e$ converges weakly in $L^p(\R^3;\R^3)$ to some $u\in \Hzero$ with $\partial_3 u =0$ on $\Omega_{\d^0}$.
    \end{itemize}
\end{theorem}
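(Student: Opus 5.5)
The plan is to treat the three parts (compactness, liminf, recovery) separately and to work throughout with the translation operators. The stabilization term $\Scal_\e$ supplies the two ingredients that were missing in Theorem~\ref{th:state}: a uniform $L^p$ bound and the $x_3$-independence of limits. In (ii) I read the hypothesis as $\sup_\e \Ical^{\rm stab}_\e(u_\e)<\infty$.

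\emph{Compactness.} From $\lambda\Scal_\e(u_\e)\le C$ and $u_\e=0$ outside $\Omega_{\d^\e}$, we get $\sup_\e\norm{u_\e}_{L^p(\R^3;\R^3)}<\infty$. Hence $u_\e\weakly u$ in $L^p(\R^3;\R^3)$ for a subsequence. Moreover
\[
\int_{\Omega_{\d^\e}}\int_{\Omega_{\d^\e}}\frac{\abs{u_\e(\bar x,x_3)-u_\e(\bar x,x_3')}^p}{\abs{x_3-x_3'}}\,dx\,dx'\le C\e\to 0 .
\]
This functional is convex and strongly continuous, hence weakly lower semicontinuous. After restricting to fixed compact subsets of $\Omega_{\d^0}$, which lie inside $\Omega_{\d^\e}$ for small $\e$, the limit term vanishes, and so $\partial_3u=0$ on $\Omega_{\d^0}$. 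The bound on $\Ical_\e$ together with \eqref{Wgrowth} gives $\De u_\e\weakly U$ in $L^p(\Omega)$. The integration-by-parts argument of Step~1 of Theorem~\ref{th:state} then gives $u\in\Hzero$ with $\Dzero u=U$. That argument only needs weak convergence, because $\dive\psi\to\divzero\psi$ uniformly by Lemma~\ref{le:uniformconvergence} and all supports lie in a fixed bounded set. No subtraction of elements of $\Ne$ is needed here.

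\emph{Liminf.} Set $v_\e:=\Qcale u_\e$. By Proposition~\ref{prop:translation}\,(i), $(v_\e)$ is bounded in $W^{1,p}(\Omega;\R^3)$. Testing against $\phi\in C_c^\infty$ and moving $\Qcale$ onto $\phi$ (using Lemma~\ref{le:uniformconvergence}) shows $v_\e\weakly \Qcalzero u$. The classical liminf of \cite[Theorem~2]{LeR95} under weak $W^{1,p}$ convergence then gives the elastic term. For the stabilization term we drop the nonnegative double integral and use
\[
\int_{\Omega_{\d^\e}}\abs{u_\e}^p\,dx=\int_{\R^3}\abs{u_\e}^p\,dx ,
\]
together with weak lower semicontinuity of the $L^p$ norm and $u=0$ outside $\Omega_{\d^0}$.

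\emph{Recovery sequence (main obstacle).} The difficulty is that $\Scal_\e$ punishes any $x_3$-dependence at order $1/\e$ and penalizes the $L^p$ mass of every added term. So we cannot simply reuse the recovery sequence of Theorem~\ref{th:state}, which subtracts uncontrolled $h_\e\in\Ne$. I would instead build the sequence in the form
\[
u_\e(x)=\mathbbm 1_{\Omega_{\d^\e}}(x)\big(\bar u_\e(\bar x)+\e x_3\,\bar b_\e(\bar x)\big).
\]
The key observation is the following. For $\d_3>0$, radial symmetry of $Q_\rho$ gives $\int Q_\rho(z)z_3\,\bar b(\bar x-\bar\d\bar z)\,dz=0$, so $\Qcal_{\rho,\d}(x_3\bar b)=x_3\Qcaltwod\bar b$; for $\d_3=0$ this identity is immediate. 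Consequently
\[
\De u_\e\,T_{(1,\e)}^{-1}=\big(\Dtwoe\bar u_\e+\e x_3\Dtwoe\bar b_\e\,\big|\,\Qcaltwoe\bar b_\e\big).
\]
Starting from the classical LeR95 construction $w_\e+\e x_3 b_\e$ for $\bar v=\Qcaltwo\bar u$ on $\omega$ (oscillating in-plane, with $b$ approximately realizing the minimizing last column), extended to $W^{1,p}(\R^2)$ with common compact support, I set $\bar u_\e:=\Pcaltwoe w_\e$ and $\bar b_\e:=\Pcaltwoe b_\e$. This uses the two-dimensional version of Proposition~\ref{prop:translation}, and a diagonal argument to pass from approximate to exact recovery.

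The vertical double integral is then of order $\e^{p-1}\to0$, since $p>1$. The elastic term converges by \eqref{eq:recoveryclassical}. The mass term converges because the two-dimensional analogue of Lemma~\ref{le:convergenceP} gives $\bar u_\e\to\Pcaltwo\bar v$ strongly in $L^p$ on bounded sets, so that $\int_{\Omega_{\d^\e}}\abs{u_\e}^p\to\int_{\Omega_{\d^0}}\abs{\bar u}^p$. This strong convergence is what produces equality, not just an inequality, in the stabilization term.

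The remaining mismatch is that $\Pcaltwo\bar v$ and $\bar u$ differ by an element of $N^{\bar\rho,p}_{\bar\d^0}(\omega;\R^3)$. I would correct it by an $x_3$-independent two-dimensional version of Lemma~\ref{le:Napproximation}, which yields elements of $N^{\bar\rho,p}_{\bar\d^\e}$ converging strongly and hence keeps the $L^p$ term exact. I expect this to be the most delicate point, because the correction must be $x_3$-independent and converge strongly, not just weakly, for the mass term to pass to the limit. Finally, the formula with the weight $d_{\d^0}$ follows by Fubini: for $\bar x\in\omega_{\bar\d^0}$, the vertical section of $\Omega_{\d^0}$ over $\bar x$ has length $1+2\d^0_3\sqrt{1-(\dist(\bar x,\omega)/\bar\d^0)^2}$, which is $1+2\d_3^0$ when $\bar\d^0=0$.
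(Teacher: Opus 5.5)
Your proposal is correct. Its compactness and liminf parts are essentially the paper's Steps~1--2. You move the vertical double-integral argument (giving $\partial_3u=0$ on $\Omega_{\d^0}$) from the liminf into the compactness part. You also get the mass term directly from $\int_{\Omega_{\d^\e}}\abs{u_\e}^p\,dx=\int_{\R^3}\abs{u_\e}^p\,dx$ and $u=0$ off $\Omega_{\d^0}$, instead of exhausting $\Omega_{\d^0}$ by sets $\Omega'$. Your reading of (ii) with $\Ical^{\rm stab}_\e$ is the one the paper's proof actually uses.

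The genuine difference is the recovery sequence. The paper works in two stages.
\begin{itemize}
\item[a)] Step~3a recovers only the unrelaxed energy $\int_\omega\Wmin(\Dtwo\bar u)\,d\bar x+\lambda\int_{\Omega_{\d^0}}\abs{\bar u}^p\,dx$ with the ansatz $\bar u_k(\bar x)+\e x_3 b_{k,\e}(\bar x)$. Here $\bar u_k$ are smooth approximants of $\bar u$ itself, so the in-plane part needs no correction, only $\Dtwoe\bar u_k\to\Dtwo\bar u_k$. Only the last column is translated, $b_{k,\e}=\Pcaltwoe d_k$.
\item[b)] Step~3b relaxes at the level of the two-dimensional nonlocal functional. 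It translates a classical relaxation sequence with $\Pcaltwo$, subtracts elements of $N^{\bar{\rho},p}_{\bar{\d}^0}(\omega;\R^3)$ via the two-dimensional Corollary~\ref{cor:compactness}, and then diagonalises.
\end{itemize}

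You instead push the whole (already relaxed and diagonalised) $x_3$-affine classical sequence $w_\e+\e x_3 b_\e$ of \cite{LeR95} through $\Pcaltwoe$. The identity $\Qcale(x_3\bar b)=x_3\Qcaltwoe\bar b$, which comes from the $z_3$-symmetry of $Q_\rho$, makes $(\De u_\e)T_{(1,\e)}^{-1}$ coincide on $\Omega$ with the classical rescaled gradient. So the elastic term is literally the local one, and no separate nonlocal relaxation step is needed.

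The price is twofold. First, you need the two-dimensional analogues of Lemmas~\ref{le:convergenceP} and~\ref{le:Napproximation} to get strong $L^p$ convergence of the in-plane part to $\bar u$. This is exactly the scheme of Step~3 of Theorem~\ref{th:state}, but with two-dimensional operators so that $x_3$-independence survives the $N$-correction. Second, the diagonal must be chosen slowly enough that $\e^{p-1}\norm{b_\e}^p_{W^{1,p}(\R^2;\R^3)}\to0$. This controls both the double integral and $\e x_3\Pcaltwoe b_\e$ in $L^p$, since $\Pcaltwoe$ is bounded uniformly in $\e$. The paper's route, in turn, never needs the internal $x_3$-affine structure of the classical recovery sequence, only the two-dimensional relaxation theorem.

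One caveat is shared with the paper. Your $O(\e^{p-1})$ bound on the vertical double integral tacitly reads $\Scal_\e$ as integrating only over pairs $(\bar x,x_3),(\bar x,x_3')$ that both lie in $\Omega_{\d^\e}$. Take the formula literally with, e.g., $\bar\d^0>0$ and $\d_3^0>0$. Then the vertical sections of $\Omega_{\d^\e}$ over $\omega_{\bar\d}\setminus\omega$ are shorter than the full $x_3'$-range. Pairs with $(\bar x,x_3')\notin\Omega_{\d^\e}$ then contribute at order $1/\e$ unless $\bar u$ vanishes on $\omega_{\bar\d^0}\setminus\omega$.
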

\begin{proof}
    \textit{Step 1: Compactness.} The compactness with respect to the weak topology is clear due to the term
    \[
    u \mapsto \lambda\int_{\Omega_{\d^\e}}\abs{u}^p\,dx = \lambda\int_{\R^3}|u|^p\dd x
    \]
    present in the stabilization functional. \smallskip

    \textit{Step 2: Liminf-inequality.} For the liminf-inequality, let $u \in L^p(\R^3;\R^3)$ and $(u_\epsilon)_\epsilon \subset L^p(\R^3;\R^3)$ be a sequence converging weakly to $u$. We further suppose without loss of generality that 
    \[
    \sup_{\epsilon} \Ical_\epsilon^{\rm stab}(u_\epsilon) < \infty.
    \]
    It is not hard to see that one can adapt the proof of the liminf-inequality of Theorem~\ref{th:state}\,(i) to the case of weak convergence; that is, we have
    \begin{equation}\label{eq:ieliminf}
    \liminf_{\epsilon \to 0} \Ical_\epsilon(u_\epsilon) \geq \Ical(u)=\int_\Omega \Wmin^\qc(\Dbar u)\dd x.
    \end{equation}
    Furthermore, the weak lower semicontinuity of the $L^p$-norm yields
    \[
    \liminf_{\epsilon \to 0} \int_{\Omega_{\d^\e}}\abs{u_\e}^p\,dx \geq \liminf_{\epsilon \to 0} \int_{\Omega'}\abs{u_\e}^p\,dx \geq   \int_{\Omega'}\abs{u}^p\,dx
    \]
    for any set $\Omega' \subset \Omega_{\d^\e}$ for all $\epsilon$ large enough. Hence, letting $\Omega'$ tend to $\Omega_{\d^0}$ with the monotone convergence theorem, we find
    \begin{equation}\label{eq:lpliminf}
    \liminf_{\epsilon \to 0} \int_{\Omega_{\d^\e}}\abs{u_\e}^p\,dx \geq  \int_{\Omega_{\d^0}}\abs{u}^p\,dx.
    \end{equation}
    In a similar manner, we find using the weak lower semicontinuity of convex integral functionals, that
    \begin{align*}
    \liminf_{\e \to 0} \int_{\Omega_{\d^\e}}\int_{\Omega_{\d^\e}}\frac{\abs{u_\epsilon(\bar{x},x_3)-u_\epsilon(\bar{x},x_3')}^p}{\abs{\epsilon(x_3-x_3')}}\,dx\,dx' &\geq \liminf_{\e \to 0} \frac{1}{\eta}\int_{\Omega'}\int_{\Omega'}\frac{\abs{u_\e(\bar{x},x_3)-u_\e(\bar{x},x_3')}^p}{\abs{x_3-x_3'}}\,dx\,dx' \\
    &\geq \frac{1}{\eta}\int_{\Omega'}\int_{\Omega'}\frac{\abs{u(\bar{x},x_3)-u(\bar{x},x_3')}^p}{\abs{x_3-x_3'}}\,dx\,dx',
    \end{align*}
    where $\eta>0$ is arbitrary and $\Omega '\subset \Omega_{\d^\e}$ for all large $\e$. Letting first $\Omega'$ tend to $\Omega_{\d^0}$ and subsequently $\eta \to 0$, we infer that
    \[
    \int_{\Omega_{\d^0}}\int_{\Omega_{\d^0}}\frac{\abs{u(\bar{x},x_3)-u(\bar{x},x_3')}^p}{\abs{x_3-x_3'}}\,dx\,dx'=0.
    \]
    Therefore, we find that $u(\bar{x},x_3)=u(\bar{x},x_3')$ for a.e.~$x,x' \in \Omega_{\d^0}$. Hence, we deduce that $\partial_3 u=0$ on $\Omega_{\d^0}$ in a distributional sense. Adding \eqref{eq:ieliminf} and \eqref{eq:lpliminf} together, now yields the liminf-inequality. \smallskip

    \textit{Step 3: Recovery-sequence.} We split this step into two parts. We first recover the energy without the quasiconvexification of $\Wmin$ and deal with approximating the quasiconvexification via relaxation separately.\medskip
		
	\textit{Step 3a: Recovery without relaxation.} Let $u\in \Hzero$ with $\Ical^{\rm stab}(u)<\infty$, and denote by $\bar{u} \in H^{\bar{\rho},p}_{\bar{\d}^0}(\omega;\R^3)$ its two-dimensional representative. In view of measurable selection principles and the lower bound in \eqref{Wgrowth} we find $d\in L^p(\omega;\R^3)$ such that
	\begin{align*}
		\int_\omega W(\Dtwo \bar{u}| d) \dd x = \int_\omega \Wmin(\Dtwo \bar{u}) \dd x.
	\end{align*}
	We extend $d$ trivially to all of $\R^2$ and choose a sequence $(d_k)_k\subset C_c^\infty(\omega;\R^3)$ such that $d_k \to d$ in $L^p(\R^2;\R^3)$ and pointwise almost everywhere. Moreover, we take a sequence $(\bar{u}_k)_k \subset C_c^{\infty}(\R^2;\R^3)$ such that $\bar{u}_k \to \bar{u}$ in $H^{\bar{\rho},p}_{\bar{\d}^0}(\omega;\R^3)$.
	In particular, the continuity of $W$ and its upper bound in \eqref{Wgrowth} produce
	\begin{align}\label{approximation}
		\lim_{k\to \infty} \int_\omega W(\Dtwo \bar{u}_k| d_k) \dd \bar{x} = \int_\omega \Wmin(\Dtwo \bar{u}) \dd \bar{x}
	\end{align}
	due to dominated convergence.
	
	We set $b_{k,\epsilon}:=  \Pcaltwoe d_k$ for every $k\in\N$ with $\Pcaltwoe$ as in \eqref{Ptwo}, and observe that $b_{k,\e}\to b_k:=\Pcaltwo d_k$ as $\epsilon \to 0$ in $W^{1,p}(\R^2;\R^3)$.
    To see this, we first observe 
    \begin{align}\label{Pcal_2d}
    \Pcaltwoe d_k (\bar{x}):=\int_{\R^2}(d_k(\bar{x}+ \bar{\d}^\e\bar{h})-d_k(\bar{x}))\Div V_{\bar\rho}(\bar{h})\dd \bar{h}\quad \text{ for every $\bar{x}\in \omega$,}
    \end{align}
    which is the two-dimensional version of \eqref{Pcal_3d}, and $V_{\bar\rho}$ is the two-dimensional kernel from the nonlocal fundamental theorem of calculus (cf.~\eqref{kernel_fundamental_theorem} and \eqref{eq:Vrhobounds}).
    The Lipschitz continuity of $d_k$ then produces uniform convergence. After taking the derivative in \eqref{Pcal_2d} and switching the integral with the gradient (note that $d_k \in C_c^\infty(\omega;\R^3)$), we repeat the same argument as before to obtain uniform convergence of the derivative as well.
    
    We then define the first step towards a recovery sequence as
	\begin{align*}
		u_{k,\eps}(x):= \mathbbm{1}_{\Omega_{\d^\e}}(x)(\bar{u}_k(\bar{x}) + \eps x_3 b_{k,\e}(\bar{x})) \quad \text{for $x=(\bar{x},x_3) \in \R^3$.}
	\end{align*}
	The rescaled nonlocal gradient of $u_{k,\eps}$ then satisfies
	\begin{align}\label{gradient_recovery}
		(\De u_{k,\eps})T_{(1,\epsilon)}^{-1}&= \big(\Dtwoe \bar{u}_k| 0 \big) + \eps \De(p_3 b_{k,\e})T_{(1,\e)}^{-1}= \big(\Dtwoe \bar{u}_k| 0 \big) + \eps \left(\Qcale (p_3 \bar{\nabla}b_{k,\e})| \frac{1}{\eps}\Qcale b_{k,\e} \right)\nonumber\\
		&= \big(\Dtwoe \bar{u}_k| 0 \big) + \eps \left(\Qcale (p_3 \bar{\nabla}b_{k,\e})| \frac{1}{\eps} \Qcaltwoe \Pcaltwoe d_{k}  \right)\nonumber\\
		&= \big(\Dtwoe \bar{u}_k| d_k \big) + \eps  \big((\Qcale (p_3 \bar{\nabla}b_{k,\e})| 0\big)
	\end{align}
	where $p_3(x) = x_3$ for all $x\in\R^3$ and using that $\Pcaltwoe$ is the inverse of $\Qcaltwoe$. 
    Note also that $\Dtwoe \bar{u}_k \to \Dtwo \bar{u}_k$ in $L^p(\omega;\R^{3\times2})$ due to the smoothness of $\bar{u}_k$, cf.~the strategy in Lemma \ref{le:uniformconvergence}.
    The second term on the right-hand side of \eqref{gradient_recovery} vanishes as $\e\to 0$ since $b_{k,\e}\to b_k$ in $W^{1,p}(\R^2;\R^3)$ and the boundedness of $\Qcal_{\rho,\d^\e}$ as in Lemma \ref{prop:translation} $(i)$.
	The first energy term then satisfies
	\begin{align}\label{first_energy}
		\int_{\Omega}W((\De u_{k,\eps})T_{(1,\e)}^{-1}) \dd x \to \int_\omega W(\Dtwo \bar{u}_k| d_k)\,d\bar{x}\quad \text{ as }\eps \to 0
	\end{align}
	in light of \eqref{gradient_recovery}, the continuity of $W$, \eqref{Wgrowth}, and dominated convergence.
  For the $L^p$-norm, we simply use that $u_{k,\epsilon} \to \bar{u}_k$ as $\epsilon \to 0$ to find
    \begin{align}\label{second_energy}
		\int_{\Omega_{\d^\e}}\abs{u_{k,\epsilon}}^p \dd x \to \int_{\Omega_{\d^0}}\abs{\bar{u}_{k}}^p \dd x\quad \text{ as }\eps \to 0.
\end{align}
Finally, for the double integral we have
\begin{align*}
    \int_{\Omega_{\d^\e}}\int_{\Omega_{\d^\e}}\frac{\abs{u_{k,\epsilon}(\bar{x},x_3)-u_{k,\epsilon}(\bar{x},x_3')}^p}{\abs{\epsilon(x_3-x_3')}}\,dx\,dx' \leq \epsilon^{p-1}\int_{\Omega_{\d^\e}}\int_{\Omega_{\d^\e}}\abs{x_3-x_3'}^{p-1}\abs{b_{k,\e}(\bar{x})}^p\,dx\,dx'\to 0
\end{align*}
as $\epsilon \to 0$ using that $p>1$. Combining \eqref{first_energy} and \eqref{second_energy} with \eqref{approximation} yields
\[
\lim_{k \to \infty} \lim_{\epsilon \to 0} \Ical_\e^{\rm stab}(u_{k,\epsilon})= \int_{\omega}\overline{W}(\Dtwo \bar{u})\,d\bar{x}+\lambda\int_{\Omega_{\d^0}}\abs{\bar{u}}^p \dd x.
\]

\textit{Step 3b: Relaxation.} Let $\bar{u}\in H^{\bar{\rho},p}_{\bar{\d}^0}(\omega;\R^3)$, then we set $v= \Qcal_{\bar{\rho},\bar{\d}^0}\bar{u}\in W^{1,p}(\omega;\R^3)$ so that $\Dtwo \bar{u} = \bar{\nabla}v$ in $\omega$. Standard relaxation results produce a sequence $(v_j)_j\subset W^{1,p}(\omega;\R^3)$ with $v_j\rightharpoonup v$ in $W^{1,p}(\omega;\R^3)$ and
\begin{align}\label{relaxation_step}
    \int_\omega \Wmin(\bar{\nabla}v_j) \dd \bar{x} \to \int_\omega \Wmin^\qc(\bar{\nabla}v) \dd \bar{x}  = \int_\omega \Wmin^\qc(\Dtwo \bar{u}) \dd \bar{x}.
\end{align}
If $E\colon W^{1,p}(\omega;\R^3) \to W^{1,p}(\R^2;\R^3)$ is any bounded linear extension operator, then we define $\tilde{u}_j = \one_{\omega}\Pcaltwo(Ev_j)$ and observe that $\Dtwo\tilde{u}_j = \bar{\nabla}v_j$ so that
\begin{align*}
    \Dtwo\tilde{u}_j \rightharpoonup \Dtwo\bar{u} \quad \text{in $L^p(\omega;\R^{3\times 2})$ as $j \to \infty$}.
\end{align*} 
Combining this with a two-dimensional version of Corollary~\ref{cor:compactness} (which holds in exactly the same way by Remark~\ref{rem:rhobaradmissible}) yields $h_j\in N^{\bar{\rho},p}_{\bar{\d}^0}(\omega;\R^3)$ such that
$\bar{u}_j:=\tilde{u}_j-h_j$ satisfies $\Dtwo\bar{u}_j = \bar{\nabla}v_j$ and $\bar{u}_j\to \bar{u}$ in $L^p(\omega_{\bar{\d}^0};\R^3)$. In particular, \eqref{relaxation_step} and dominated convergence then give
\begin{align*}
    \int_{\omega}\Wmin(\Dtwo \bar{u}_j)\,d\bar{x}+\lambda\int_{\Omega_{\d^0}}\abs{\bar{u}_j}^p \dd x \to \int_{\omega}\Wmin^\qc(\Dtwo \bar{u})\,d\bar{x}+\lambda\int_{\Omega_{\d^0}}\abs{\bar{u}}^p \dd x.
\end{align*}
We then repeat Step 3a for each $j$ and select a suitable diagonal sequence to conclude the proof.
\end{proof}

\begin{remark}
a) The result is phrased with respect to the weak convergence in $L^p$ since we still cannot exclude the functions in $\Ne$. 
Indeed, while the stabilization term ensures that they do not become too large, the fact that $\Ne$ is infinite-dimensional, see~Remark~\ref{rem:N}, makes it so that we still can find bounded sequences in $\Ne$ which do not converge strongly.\smallskip

b) Since the stabilization term $\lambda \Scal_\e$ in \eqref{eq:stabilizedI} is non-negative, the compactness result in Theorem \ref{th:state} ($ii$) still holds. Theorem \ref{th:stabilized} ($ii$) additionally provides the weak convergence of the sequence itself (without taking suitable representatives).\smallskip

c) In light of b), we can also prove the convergence of minimizers as in Corollary \ref{cor:minimizers} when $\Ical_\e$ and $\Ical$ are replaced by their stabilized counterparts. In this setting, in fact, we do not require assumption \eqref{eq:forces}, which was needed in the non-stabilized setting to ensure a generalized force balance and prevent the minimum energy (even for fixed $\d,\e$) from becoming $-\infty$. The stabilized energies avoid this issue by design, so that we merely require the force densities $f_\e,f_0\in L^{p'}(\R^3;\R^3)$ to satisfy
$$f_\e \to f_0 \quad\text{in $L^{p'}(\R^3;\R^3)$ as $\e\to 0$.}$$
\end{remark}

\subsection*{Acknowledgements}
D.E. acknowledges the financial support from the FWF project \href{https://doi.org/10.55776/V1042}{10.55776/V1042} and KU Eichst\"att-Ingolstadt, through which a research visit to TU Wien was made possible. A.M. was funded by the Austrian Science Fund (FWF) project \href{https://doi.org/10.55776/V1042}{10.55776/V1042}. H.S. was funded by the Austrian Science Fund (FWF) projects \href{https://doi.org/10.55776/F65}{10.55776/F65} and \href{https://doi.org/10.55776/Y1292}{10.55776/Y1292} and by the Fonds de la Recherche Scientifique - FNRS through a MIS-Ulysse project (Scientific Impulse Mandate Instrument) number F.6002.25. A visit of H.S. at KU Eichst\"att-Ingolstadt was supported by the Deutsche Forschungsgemeinschaft (DFG) through project 541520348.
For open access purposes, the authors have applied a CC BY public copyright license to any author-accepted manuscript version arising from this submission.

\noindent The authors gratefully acknowledge Anna Dole\v{z}alov\'{a} for her insightful comments and stimulating discussions during the development of this work.
\black

\bibliographystyle{abbrv}
\bibliography{EMS24}

\end{document}